\newtheorem{theorem}{Theorem}[section]
\newtheorem{proposition}[theorem]{Proposition}
\newtheorem{lemma}[theorem]{Lemma}
\newtheorem{conjecture}[theorem]{Conjecture}
\tikzset{every picture/.style={line width=0.75pt}} 
\tikzset{VertexStyle/.style = {shape= circle, ball color= white, inner sep= 0pt, outer sep= 0pt, minimum size = 30 pt, }}  
\normalfont\fontsize{15.5}{18}\bfseries}{\thesection}{1em}{}
\title{Edge-maximal graphs on orientable and some non-orientable surfaces}
\author{James Davies\thanks{Department of Combinatorics and Optimization, University of Waterloo, Waterloo, Canada. E-mail: \texttt{jgdavies@uwaterloo.ca}.} \ and Florian Pfender\thanks{Department of Mathematical and Statistical Sciences, University of Colorado Denver, Denver, USA. Email: \texttt{florian.pfender@ucdenver.edu}. Research of this author is partially supported by NSF grant DMS-1855622.}}
\date{}
\begin{document}

\maketitle
	
\begin{abstract}
	We study edge-maximal, non-complete graphs on surfaces that do not triangulate the surface.
	
	We prove that there is no such graph on the projective plane $\mathbb{N}_1$, $K_7-e$ is the unique such graph on the Klein bottle $\mathbb{N}_2$ and $K_8-E(C_5)$ is the unique such graph on the torus $\mathbb{S}_1$.
	
	In contrast to this for each $g\ge 2$ we construct an infinite family of such graphs on the orientable surface $\mathbb{S}_g$ of genus $g$, that are $\lfloor \frac{g}{2} \rfloor$ edges short of a triangulation.
\end{abstract}

\section{Introduction}

All graphs considered in this paper are simple. A graph $G$ is edge-maximal in a graph class $\mathcal{G}$ if $G\in \mathcal{G}$, and the addition of any missing edge $e\notin E(G)$ yields a graph $G+e\notin\mathcal{G}$. In this paper, the graph classes we are interested in are the graphs embeddable in a given surface, and we try to understand the edge maximal graphs in these classes.

It is well known and straightforward to show that every edge-maximal planar graph either triangulates the surface or is a complete graph on at most two vertices. While every graph that triangulates a given surface, and every embeddable complete graph is trivially edge-maximal, the reverse of this statement is not true in general for other surfaces. As first observed by Franklin~\cite{franklin1934six}, $K_7-e$ is edge-maximally embeddable on the Klein bottle, while it is one edge short of the number of edges required for a triangulation given by Euler's formula. However, for some low genus surfaces we prove that there are few exceptions. This answers a question of McDiarmid and Wood \cite{mcdiarmid2018edge}.

\begin{theorem}\label{projectivePlane}
	Every edge-maximal projective-planar graph either triangulates the surface or is complete.
\end{theorem}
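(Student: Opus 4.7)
Suppose for contradiction that $G$ is edge-maximal in $\mathbb{N}_1$, non-complete and not a triangulation. A quick preliminary reduction: if $|V(G)|\le 6$, then every graph on $V(G)$ is a subgraph of $K_6$, and since $K_6$ embeds in $\mathbb{N}_1$, adding any non-edge to $G$ keeps it projective-planar; edge-maximality therefore forces $G=K_{|V(G)|}$, contradicting non-completeness. So $|V(G)|\ge 7$, and any clique in $G$ has order at most $6$ since $K_7\not\hookrightarrow \mathbb{N}_1$ by Heawood.

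Now fix a $2$-cell embedding of $G$ in $\mathbb{N}_1$. The engine of the proof is the \emph{face-clique lemma}: if non-adjacent vertices $u,v$ both appear on the boundary of some face $F'$, then drawing $uv$ as an arc through the open disk $F'$ extends the embedding to one of $G+uv$, contradicting edge-maximality. Hence the distinct vertices on any face boundary form a clique in $G$. Standard connectivity reductions additionally let me assume $G$ is $3$-connected (inserting a new edge through a shared face handles disconnectedness and cut vertices, and a $2$-cut admits a Whitney-style flip that reduces to the previous case), so every face boundary is a cycle.

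Since $G$ is not a triangulation, some face $F$ has length $\ell\ge 4$; the face-clique lemma and $3$-connectivity give that $V(F)$ is a clique of size exactly $\ell$, and the preliminary observation gives $\ell\le 6$. I rule out $\ell=6$ immediately: any embedding of $K_6$ in $\mathbb{N}_1$ has $f=10$ and $\sum_i|f_i|=2e=30$, so with each $|f_i|\ge 3$ every face must be a triangle, contradicting $\ell\ge 4$. So $\ell\in\{4,5\}$, and the same Euler bookkeeping pins down the companion faces of the $K_\ell$-subembedding of $G$: for $\ell=4$ all three faces of $K_4$ in $\mathbb{N}_1$ are $4$-faces, and for $\ell=5$ the remaining five faces of $K_5$ are triangles while $F$ itself is a $5$-face.

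To finish, I would extract a contradictory edge in each of the two remaining cases. The $K_\ell$-subembedding cuts $\mathbb{N}_1$ into a few regions whose closures carry the rest of $G$ as planar ``compartments'' glued along $K_\ell$. Since $|V(G)|\ge 7>\ell$, at least one compartment contains vertices outside $V(F)$, and non-completeness supplies a non-edge of $G$. I would then re-embed $G$ — for instance by flipping or rotating a compartment across a $K_\ell$-edge, using the very constrained embeddings of $K_4$ and $K_5$ in $\mathbb{N}_1$ — so that a pair of non-adjacent vertices ends up on a common face of the new embedding, whereupon the face-clique lemma yields a new edge that can be added, contradicting edge-maximality. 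The main obstacle is exactly this last step: the re-embedding argument demands a careful but finite case analysis of how compartments can attach to $K_4$ or $K_5$, and is where the explicit face structure of these small cliques in $\mathbb{N}_1$ is used.
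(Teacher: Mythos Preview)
Your setup shares the paper's opening move --- the boundary vertices of a non-triangular face are pairwise adjacent --- but the proof is incomplete: essentially all the content lies in the ``re-embedding argument'' you flag as the main obstacle, and you do not carry it out. The paper does, and its mechanism is sharper than the compartment-flipping you sketch. Rather than moving whole pieces of $G$, the paper \emph{flips a single edge} of the $K_4$ into the $4$-face (i.e.\ deletes it from its current position and redraws it through $abcd$) and reads off a forced adjacency from edge-maximality. Concretely: with the $4$-face $abcd$ and the unique $K_4$-embedding in $\mathbb{N}_1$ having three $4$-regions, one of the other two regions must already be a face of $G$ (else flip $bd$ into $abcd$, merging those two regions, and draw a new edge between their interior vertices). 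All remaining vertices then live in the third region; if $e$ is the first neighbour of $b$ there, flipping $bd$ puts $e$ on a common face with $a,c,d$, forcing those adjacencies, and further flips force $aeb$, $bed$, $dec$, $dea$ all to be faces of $G$, so $G\cong K_5$, contradicting non-completeness. This one-edge-at-a-time flip is the missing device; ``rotate a compartment'' does not pin anything down.

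Two smaller issues. Your claim that $3$-connectivity makes every face boundary a simple cycle is not valid on general surfaces; the paper never assumes it, using instead $2$-connectivity plus Proposition~\ref{uvw distinct} and Lemma~\ref{4-face} to manufacture a genuine $4$-face. That lemma also makes your case $\ell=5$ superfluous: every Euler-impure graph on any surface admits an embedding with a $4$-face, so one may take $\ell=4$ from the outset. Finally, Euler bookkeeping alone does not pin the $K_4$-subembedding to three $4$-faces (a priori $3{+}4{+}5$ is consistent with $f=3$ and $\sum|f_i|=12$, and the subembedding need not even be $2$-cell); the paper cites an embedding atlas for the uniqueness, and you would need the same.
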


\begin{theorem}\label{klein}
	With the exception of $K_7-e$, every edge-maximal graph embeddable on the Klein bottle either triangulates the surface or is complete.
\end{theorem}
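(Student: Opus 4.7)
The plan is to take an arbitrary edge-maximal, non-complete, non-triangulating graph $G$ embedded on $\mathbb{N}_2$ and show that $G\cong K_7-e$. By Euler's formula, $\sum_{i\ge 4}(i-3)f_i=3|V(G)|-|E(G)|\ge 1$, so the embedding has some face of length at least $4$. The key structural observation I would prove first is that for any non-edge $uv$ of $G$, the vertices $u$ and $v$ do not both lie on the boundary of any face: otherwise we could route $uv$ through that face and extend the embedding to $G+uv$ on $\mathbb{N}_2$, contradicting edge-maximality. After reducing to the case that face boundaries are simple cycles, this yields the \emph{face-clique property}: every $k$-face induces $K_k$ on its boundary vertices, with $\binom{k}{2}-k$ \emph{chord} edges embedded elsewhere on $\mathbb{N}_2$. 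Since $K_7$ does not embed on $\mathbb{N}_2$ (Franklin), no face has length at least $7$.

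With this structural setup in place, I would split by $n=|V(G)|$. For $n\le 6$, the complete graph $K_n$ embeds on $\mathbb{N}_2$, so every non-edge of $G$ can be added, and edge-maximality forces $G=K_n$, contradicting non-completeness. For $n=7$, non-embeddability of $K_7$ gives $|E(G)|\le 20$; if $|E(G)|<20$ then $G$ is a strict subgraph of some $K_7-e'$, and since $K_7-e'$ embeds on $\mathbb{N}_2$ by Franklin's embedding, some edge can be added to $G$, a contradiction. Thus $|E(G)|=20$ and $G=K_7-e$, which is genuinely edge-maximal since $G+e=K_7$ does not embed.

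The main obstacle is the case $n\ge 8$. I would pick a non-triangular face $F$ of length $k\in\{4,5,6\}$ whose boundary vertex set $S$ induces $K_k$, and analyse vertices $u\notin S$. If $u$ is non-adjacent to some $v\in S$, then the face-clique property forces $u$ to lie outside the closed star of $v$ in the embedding, which is very restrictive given that the embedded $K_k$ on $S$ together with its $\binom{k}{2}-k$ chord edges already consumes much of the Euler-genus budget of $\mathbb{N}_2$. I would then case-split on $k$, aiming either to exhibit, after possibly re-routing chord edges of $F$, an embedding in which two non-adjacent vertices share a face (contradicting edge-maximality), or to construct an embedded $K_7$ on $\mathbb{N}_2$ (contradicting Franklin). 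I expect the sub-case $k=4$ to be the most delicate, since a single $4$-face provides less rigidity than a longer face.
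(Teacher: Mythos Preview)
Your outline correctly handles $n\le 7$, but the case $n\ge 8$ is where essentially all of the content lies, and your plan there is not a proof but a hope. Saying that the embedded $K_k$ on $S$ ``consumes much of the Euler-genus budget'' and that you will ``case-split on $k$'' and look for contradictions does not identify any mechanism that actually produces one. In particular, the face-clique property alone is far too weak: an edge-maximal $G$ can have many $4$-faces inducing $K_4$ without any immediate Euler-formula obstruction, so you need a way to control what happens \emph{inside} the rest of the surface, not just on the one face you picked.

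The paper's proof supplies exactly the missing machinery. First, it shows (Lemma~\ref{4-face}) that one can always arrange a $4$-face, so your cases $k=5,6$ are unnecessary. Second, and crucially, it enumerates the finitely many embeddings of $K_4$ on $\mathbb{N}_2$ having a $4$-face (there are two), which gives a concrete picture of the complementary $2$-cell. Third, it develops a systematic \emph{edge-flipping} technique: since the two diagonals of the $4$-face are flippable into it, one repeatedly flips edges and uses edge-maximality to force new vertices and new triangular faces inside the $2$-cell, step by step, until the entire embedding is determined. This is formalised in Lemmas~\ref{>--<} and~\ref{T+K lemma}, and the Klein bottle proof is then a careful case analysis on the two $K_4$ embeddings, ending with $G\cong K_7-e$ in one case and $G\cong K_6$ (hence complete, a contradiction) in the other. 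Your phrase ``possibly re-routing chord edges of $F$'' is gesturing at this idea, but you have not set up the flipping argument, the enumeration of $K_4$ embeddings, or the structural lemmas that make it go through.
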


\begin{theorem}\label{torus}
	With the exception of $K_8-E(C_5)$, every edge-maximal graph embeddable on the torus either triangulates the surface or is complete.
\end{theorem}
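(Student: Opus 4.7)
The plan is to let $G$ be an edge-maximal torus-embeddable graph that is neither complete nor triangulates the torus, fix a $2$-cell embedding $\Pi$, and argue that $G = K_8 - E(C_5)$. The backbone is a local observation: in any non-triangular face $F$ of $\Pi$ with boundary walk $v_1 v_2 \cdots v_k$, every pair of distinct non-consecutive boundary vertices $v_i, v_j$ must already be adjacent in $G$. Indeed, otherwise the chord $v_iv_j$ can be drawn through the interior of $F$, extending $\Pi$ to a torus embedding of $G+v_iv_j$ and contradicting edge-maximality. Consequently non-triangular faces are ``nearly complete'' on their boundaries: a face of length $k$ with $k$ distinct boundary vertices forces a $K_k$ subgraph on those vertices.

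Euler's formula on the torus gives $|V(G)| - |E(G)| + |F(\Pi)| = 0$, and the deficiency $\sigma := 3|V(G)| - |E(G)|$ satisfies $\sigma = \sum_F(|F|-3) \ge 1$ since $G$ is not a triangulation. Combined with the local lemma this sharply restricts possible face sizes: a length-$7$ face with distinct boundary vertices would force a $K_7$ subgraph, yet every torus embedding of $K_7$ is itself a triangulation and so leaves no room for an additional non-triangular face; a length-$\ge 8$ face with distinct vertices would force a non-toroidal $K_8$. A careful accounting, paralleling the treatment of the Klein bottle in Theorem~\ref{klein}, should force $\sigma = 1$, with a single quadrilateral face whose two diagonals are edges of $G$ routed through other regions.

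It remains to pin down the number of vertices. For $n := |V(G)| \le 7$, every graph is a subgraph of $K_7$, which triangulates the torus, so no non-complete such graph is edge-maximal. To rule out $n \ge 9$, the plan is to leverage the global edge-maximality against the local density forced by the diagonals of the unique quadrilateral face: for $n \ge 9$, locate a non-edge $uv$ such that $G + uv$ still embeds on the torus, contradicting edge-maximality. Once $n = 8$ is fixed and $|E(G)| = 3\cdot 8 - 1 = 23$, the structure of $G$ is constrained enough to identify its five non-edges as forming a $C_5$, yielding $G = K_8 - E(C_5)$. Finally, verify edge-maximality by exhibiting an explicit torus embedding of $K_8 - E(C_5)$ (with one quadrilateral face and $14$ triangular faces) and checking that $K_8 - E(P_5)$, obtained by adding any missing edge, is non-toroidal.

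The principal obstacle is bounding $n$ from above. The local lemma controls adjacencies only within a single face boundary, whereas edge-maximality is a global statement about every non-edge of $G$. The delicate step is tracking how the forced diagonals of the non-triangular face route through the remainder of the embedding, and combining this with case analysis to exclude non-complete, non-triangulating examples on $n \ge 9$ vertices. I expect the argument to largely follow the blueprint of the Klein-bottle proof, with extra care needed because the torus has larger genus and therefore admits richer embeddings that must be individually excluded.
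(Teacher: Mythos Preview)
Your proposal identifies the right starting observation (non-consecutive boundary vertices of a non-triangular face must be adjacent), but the heart of the argument is missing. You openly flag that bounding $n$ from above is ``the principal obstacle,'' yet your plan for it---``locate a non-edge $uv$ such that $G+uv$ still embeds''---is a restatement of edge-maximality, not a method. Nothing in your outline explains \emph{how} to find such a non-edge for $n\ge 9$, and there is no reason to expect a clean global counting argument to do this. Your face-size bound is also shakier than it looks: the boundary walk of a face can repeat vertices, so a face of length $k$ need not yield a $K_k$; and even when the seven boundary vertices are distinct, the fact that they induce a $K_7$ does not by itself contradict the existence of further vertices or of the long face in the embedding of $G$ (the induced embedding of that $K_7$ is not the embedding of $G$). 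Finally, the torus and Klein bottle have the same Euler genus, so your remark about ``larger genus'' is off.

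The paper's proof takes a completely different, constructive route built on a technique you do not mention: \emph{edge flips}. One first shows (Lemma~\ref{4-face}) that $G$ admits an embedding with a $4$-face $abcd$ whose vertices induce $K_4$; on the torus there is a unique such $K_4$-embedding. The diagonals $ac$ and $bd$ can each be re-embedded (``flipped'') into the $4$-face, and edge-maximality then forces specific adjacencies in the complementary $2$-cell. Two technical lemmas (Lemmas~\ref{>--<} and~\ref{T+K lemma}) use this to produce vertices $e,f,g$ with prescribed incidences and triangular faces; further flip sequences locate a vertex $h$ and pin down every remaining face. At the end one reads off $G\cong K_8-E(C_5)$ directly from the face list---the bound on $|V(G)|$ falls out as a byproduct rather than being established in advance. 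This local, flip-driven analysis is the missing idea in your plan.
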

	
In 1972, Harary, Kainen, Schwenk and White conjectured that there is an edge-maximal,  non-complete and non-triangulating graph on every orientable surface other than the sphere \cite{harary1974maximal}. We prove this conjecture in a rather strong sense.

\begin{theorem}\label{1972 impures}
	Let $\mathbb{S}_g$ be the orientable surface of genus $g\ge 2$. Then there exists infinitely many edge-maximal graphs on $\mathbb{S}_g$ that are $\lfloor \frac{g}{2} \rfloor$ edges short of a triangulation.
\end{theorem}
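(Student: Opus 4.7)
The plan has two stages: first exhibit a single edge-maximal graph $G_g$ on $\mathbb{S}_g$ that is exactly $\lfloor g/2 \rfloor$ edges short of a triangulation, and then generate infinitely many examples from $G_g$ by repeated vertex insertion.

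For the base construction, the natural idea is to build $G_g$ by amalgamating smaller edge-maximal pieces along triangular faces. Using $H=K_8-E(C_5)$, the edge-maximal non-triangulating toroidal graph from Theorem~\ref{torus}, set $k=\lfloor g/2\rfloor$ and take $k$ disjoint copies $H_1,\dots,H_k$ of $H$ together with a triangulation $R$ of $\mathbb{S}_{g-k}$. Each piece has triangular faces in its embedding; iteratively identify a triangular face of each $H_i$ with a triangular face of the next piece, so that the orientable genera add to $k+(g-k)=g$. The resulting graph $G_g$ embeds on $\mathbb{S}_g$, is non-complete (each $H_i$ contributes five non-edges from the removed pentagon), and has exactly $\lfloor g/2\rfloor$ quadrilateral faces (one per summand $H_i$), giving the required deficit.

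The crux is to verify that $G_g$ is edge-maximal. Non-edges fall into two types. Type (i) are non-edges within a single $H_i$: these are the five pentagon edges, and adding any of them forces $H_i$ to have genus larger than $1$ by Theorem~\ref{torus}; by additivity of orientable genus for amalgamations along separators of small size (in the spirit of Battle--Harary--Kodama--Youngs), the resulting graph would have genus strictly greater than $g$. Type (ii) are non-edges with endpoints in different summands: these endpoints are separated in $G_g$ by one of the gluing triangles, so one again applies an additivity argument at the 3-separator to argue that the augmented graph cannot be embedded on $\mathbb{S}_g$.

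To obtain infinitely many examples, I would iteratively insert a new degree-three vertex $w$ into some triangular face $abc$ of the current embedding, joining $w$ to each of $a,b,c$. Each such insertion adds one vertex and three edges while splitting one triangular face into three, preserving the deficit $\lfloor g/2\rfloor$ from triangulation. Edge-maximality is maintained inductively: any putative embedding of the augmented graph together with a new edge from $w$ to a non-neighbour $v$ would, upon suppressing $w$, yield an embedding of the previous graph in which $v$ shares a face with each of $a,b,c$, allowing an additional edge $va$, $vb$, or $vc$ to be drawn and contradicting edge-maximality at the previous step. The main obstacle is handling type (ii) non-edges in the base construction: additivity of orientable genus is cleanest for $1$- and $2$-amalgamations, and its extension to triangle amalgamations must be applied with care to rule out \emph{all} embeddings of the augmented graph on $\mathbb{S}_g$, not merely those that respect the natural separating triangles.
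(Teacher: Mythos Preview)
Your plan has a genuine gap at the most delicate point, which you yourself flag: orientable genus is \emph{not} additive over triangle amalgamations. A clean counterexample is two copies of $K_5$ glued along a common triangle; each summand has genus~$1$, but the amalgam has $7$ vertices and is a subgraph of $K_7$, hence has genus~$1$, not~$2$. So the step ``by additivity at the $3$-separator the augmented graph has genus $>g$'' does not go through, and without it neither type~(i) nor type~(ii) non-edges are handled. Concretely, for $g=2$ your graph $G_2$ has $13$ vertices and $43$ edges; adding a pentagon edge gives $44$ edges, which Euler's formula still permits on $\mathbb{S}_2$, so a genuine structural argument is required and none is given.

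This is exactly why the paper takes a different route. Instead of triangle amalgamations, it joins pieces along $4$-cycles via a cylindrical graph (the $T_4$-join), and instead of $K_8-E(C_5)$ it uses $K_8$ itself. The key structural input is that \emph{every} embedding of $K_8$ on $\mathbb{S}_2$ has two $4$-faces (there is no embedding with a single $5$-face). Combined with a lemma controlling all minimum-genus embeddings of a $T$-join, this yields that every embedding of the constructed graph $F_g(P)$ on $\mathbb{S}_g$ has exactly $\lfloor g/2\rfloor$ non-triangular faces, each a $4$-face whose vertices induce $K_4$. Edge-maximality is then immediate, and the infinite family comes from varying the planar triangulated piece~$P$. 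Your degree-$3$-insertion idea for generating infinitely many examples also has a small hole: when the prospective new neighbour $v$ is already adjacent to all of $a,b,c$, your suppression argument produces a face containing $a,b,c,v$ but no addable edge, so the induction does not close as written.
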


In 1974, Kainen asked how many edges short of a triangulation can a graph on a given surface $\Sigma$ be \cite{kainen1974some}? McDiarmid and Wood proved an upper bound linear in the Euler genus of the surface \cite{mcdiarmid2018edge}. Together with Theorem~\ref{1972 impures}, this  asymptotically answers Kainen's question for orientable surfaces up to a multiplicative constant.

Additionally, McDiarmid and Wood asked if $G$ is embeddable in a surface $\Sigma$, and has sufficiently many vertices but is not edge-maximal, can one always add edges to obtain a triangulation of $\Sigma$ \cite{mcdiarmid2018edge}? We give a strong negative answer to this question for all orientable surfaces of genus $g\ge 2$.

\begin{theorem}\label{Wood graphs}
	Let $\mathbb{S}_g$ be an orientable surface of genus $g\ge 2$ and fix $n\ge 1$. Then there exist infinitely many graphs on $\mathbb{S}_g$ which are at least $n$ edges short of triangulating $\mathbb{S}_g$ but no super-graph on the same vertex set is less than $\lfloor \frac{g}{2} \rfloor$ edges short of triangulating $\mathbb{S}_g$.
\end{theorem}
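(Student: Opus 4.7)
The plan is to derive Theorem~\ref{Wood graphs} from Theorem~\ref{1972 impures} by attaching a large planar ``padding'' graph to an edge-maximal example along a distinguished triangular face. Start with an edge-maximal, $\lfloor g/2\rfloor$-short graph $G_0$ on $\mathbb{S}_g$ produced by Theorem~\ref{1972 impures}, together with an embedding of $G_0$. I would select a triangular face $xyz$ of this embedding with the property that, in every embedding of $G_0$ on $\mathbb{S}_g$, the triple $\{x,y,z\}$ lies on the boundary of no other face. Such a triangle should exist because $G_0$'s total ``face slack'' is only $\lfloor g/2\rfloor$ and is concentrated on a bounded number of non-triangular faces of bounded total length, so the vast majority of triangular faces use vertex triples disjoint from every non-triangular face; rigidity across embeddings I would justify via a Whitney- or Robertson--Vitray-style uniqueness result, exploiting $3$-connectivity and large representativity of the extremal graphs in Theorem~\ref{1972 impures}.

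For each $k\ge n$ and each integer $n_k\ge 1$, let $H_k$ be a planar graph on $n_k+3$ vertices with the triangle $xyz$ as outer boundary that is $k$ edges short of a planar triangulation, contains a vertex $w\in V(H_k)\setminus\{x,y,z\}$ adjacent to all of $x, y, z$, and such that $V(H_k)\setminus\{x,y,z\}$ is connected in $H_k$. Define $G$ by identifying $G_0$ and $H_k$ along the triangle $xyz$. Then $G$ embeds on $\mathbb{S}_g$ by inserting $H_k$ into the face $xyz$, and using $|E(G_0)|=3|V(G_0)|-6+6g-\lfloor g/2\rfloor$ and $|E(H_k)|=3(n_k+3)-6-k$ we see that $G$ is exactly $\lfloor g/2\rfloor+k$ edges short of a triangulation of $\mathbb{S}_g$, hence at least $n$ short. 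Varying $n_k$, $k$, or $G_0$ among the infinite family of Theorem~\ref{1972 impures} yields infinitely many such $G$.

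To verify the super-graph condition, let $H\supseteq G$ be a super-graph on $V(G)$ embedded on $\mathbb{S}_g$. Since $G_0\subseteq H$ and $G_0$ is edge-maximal, $H[V(G_0)]=G_0$, so $H$ restricts to an embedding of $G_0$ on $\mathbb{S}_g$. By the choice of $xyz$, the only face of this embedding with $\{x,y,z\}$ on its boundary is the triangular face $xyz$ itself, so the distinguished vertex $w$, being adjacent in $H$ to all of $x, y, z$, must lie inside this triangular disk; because $V(H_k)\setminus\{x,y,z\}$ is connected to $w$ via $H_k$-edges, all of $V(H_k)\setminus\{x,y,z\}$ lie inside this same disk. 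The sub-embedding of $H$ within the disk is planar on $n_k+3$ vertices, so has at most $3(n_k+3)-6=3n_k+3$ edges, of which the three triangle edges were already in $G_0$. Therefore $|E(H)|\le |E(G_0)|+3n_k = 3|V(H)|-6+6g-\lfloor g/2\rfloor$, meaning $H$ is at least $\lfloor g/2\rfloor$ edges short of a triangulation, as required.

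The main obstacle is establishing the ``unique co-facial face'' property of the triangle $xyz$ across every embedding of $G_0$ on $\mathbb{S}_g$. If the extremal graphs produced by Theorem~\ref{1972 impures} are $3$-connected with sufficiently large representativity, the standard surface-embedding uniqueness results give the claim directly; otherwise one must either consult the explicit construction of those graphs to identify a triangle with the required rigidity, or strengthen the attachment with additional local gadgets (e.g.\ gluing several small padding graphs into different triangles simultaneously) so that the placement of the padding inside $G_0$'s embedding is forced no matter which embedding of $G_0$ is used.
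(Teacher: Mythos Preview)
Your approach differs from the paper's, and the gap you yourself flag is genuine and not easily repaired by the methods you propose.

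The paper does not treat Theorem~\ref{1972 impures} as a black box. Both theorems are proved from the same construction $F_g(P)$ and the structural Lemma~\ref{F_g}, which says that in \emph{every} minimum-genus embedding of $F_g(P)$ there are exactly $\lfloor g/2\rfloor$ square faces whose vertices induce $K_4$, and all remaining non-triangular faces are interior faces of $P$. For Theorem~\ref{Wood graphs} one simply chooses $P$ to be a $3$-connected planar graph, with outer $4$-face, that is at least $n$ edges short of a planar triangulation. Then $F_g(P)$ is at least $n$ edges short on $\mathbb{S}_g$, and for any super-graph $H$ on the same vertex set, restricting an embedding of $H$ to $F_g(P)$ gives a minimum-genus embedding of $F_g(P)$; the $\lfloor g/2\rfloor$ square faces inducing $K_4$ cannot be subdivided (no new vertices, all six edges already present), so they persist as faces of $H$. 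No rigidity or unique-embedding theorem is needed: what is ``rigid'' is not the whole embedding but specifically those $\lfloor g/2\rfloor$ faces, and this rigidity is built into Lemma~\ref{F_g} via the additivity argument of Theorem~\ref{consitant face theorem} and the fact (Proposition~\ref{K_8}) that every double-torus embedding of $K_8$ has two $4$-faces.

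Your route instead glues padding into a triangle of a black-box $G_0$ and needs that, in every $\mathbb{S}_g$-embedding of $G_0$, the triple $\{x,y,z\}$ lies on no face other than $xyz$. The Robertson--Vitray style uniqueness you invoke requires representativity growing with the genus, but the $K_8$ gadgets inside $G_0=F_g(P)$ carry short non-contractible cycles, so the representativity of $G_0$ is bounded independently of $|V(P)|$ and the hypothesis fails. Your fallback ``consult the explicit construction'' is exactly what the paper does, and once you open that box it is more natural to put the padding into $P$ (a $4$-face of the ladder) rather than into a post-hoc triangle, because Lemma~\ref{F_g} already tells you where every non-triangular face lives. In short, the skeleton of your argument (pad with a planar piece; show the $K_4$-squares survive in every super-graph) matches the paper, but the specific rigidity step you need is both unproved and likely false as stated, whereas the paper sidesteps it entirely by controlling faces rather than embeddings.
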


A graph $G$ embeddable on a surface $\Sigma$ is \emph{Euler impure} if it is non-complete, edge-maximal and does not triangulate the surface.

Previously, not many Euler impure graphs on surfaces were known.
As we mentioned above, Franklin gave a first example with $K_7-e$ on the Klein bottle $\mathbb{N}_2$ \cite{franklin1934six}.
Ringel proved that $K_8-E(2K_2)$ and $K_8-E(K_{1,2})$ are both Euler impure on Dyck's surface $\mathbb{N}_3$ \cite{ringel1955man}.
Huneke and independently Jungerman and Ringel proved that all 9-vertex, 32-edge graphs embeddable on the double torus $\mathbb{S}_2$ are Euler impure \cite{huneke1978minimum,jungerman1980minimal}.
In almost all cases, Euler's formula permitting, there exists a simple $n$-vertex graph triangulating a given surface $\Sigma$. The preceding Euler impure graphs arise from the three exceptional cases where this does not hold:  $n=7$ on $\mathbb{N}_2$, $n=8$ on $\mathbb{N}_3$, and $n=9$ on $\mathbb{S}_2$ \cite{jungerman1980minimal,ringel1955man}.

Just one other Euler impure graph on a surface was known. Harary, Kainen, Schwenk and White proved that $C_3+C_5$ (the graph obtained from $K_8$ by removing the edges of a $C_5$) is Euler impure on the torus $\mathbb{S}_1$ \cite{harary1974maximal,duke1972genus}.

Many proofs on planar graphs are simplified by reducing to the case of having a triangulation as this gives some simple local structure. The existence of Euler impure graphs on surfaces means that this technique is not generally possible for surfaces other than $\mathbb{S}_0$. After possibly dealing with exceptional cases, Theorems~\ref{projectivePlane},~\ref{klein}, and~\ref{torus} allow for this technique on the projective plane, Klein bottle and the torus, respectively. The first author recently made use of Theorem \ref{projectivePlane} in this way with a discharging argument. They proved that if $G$ is a projective-planar graph with minimum degree 5 and $A_k$ is the set of vertices of degree $k$, then there exists a component of $G[A_5\cup A_6]$ containing at least three vertices of $A_5$ \cite{Davies2019}.

Similarly to Euler impure graphs, given a class of graphs $\cal G$, an edge maximal graph $G\in {\cal G}$ is \emph{$k$-impure} if $\max_{H\in {\cal G}}\{E(H)-E(G): V(H)=V(G)\}\ge k$. With the exception of the graphs arising when there is no simple $n$-vertex graph triangulation of the surface, the notion of impure and Euler impure are equivalent. We make this distinction as we feel that these exceptional graphs that are Euler impure but not impure provide important examples for studying impure graphs on surfaces.

The impurity of a graph class can be thought of as a saturation problem. Graph purity has been studied in other contexts. McDiarmid and Przykucki determined for which graphs $H$, there are no impure graphs in the class of $H$-minor free graphs. Additionally they proved that $H$-minor free graphs are either at most $k$-impure for some $k$, or there exists graphs that are $\Omega(|V|)$-impure \cite{mcdiarmid2019purity}, where $|V|$ is the number of vertices in the graph. Dehkordi and Farr recently constructed an infinite family of edge-maximal linklessly embeddable graphs with $3|V|-3$ edges \cite{dehkordi2019non}, and so ($|V|- 7$)-impure \cite{sachs1983spatial,mader1968homomorphiesatze}.

This paper is organized as follows. Section 2 is dedicated to proving Theorems \ref{projectivePlane}, \ref{klein} and \ref{torus}. In Section 3 we construct the graphs in Theorems \ref{1972 impures} and \ref{Wood graphs}. Additionally we build machinery for constructing more similar graphs. Lastly in Section 4 we discuss possible directions for further work and make a conjecture on the Euler impure graphs on Dyck's surface $\mathbb{N}_3$ analogous to Theorems \ref{projectivePlane}, \ref{klein} and \ref{torus}.

\section{Edge-maximal graphs on low genus surfaces}

Given an embedding of an Euler impure graph $G$ on a surface $\Sigma$, we can consider a non-triangular face $F$. Let $u$ and $v$ be distinct non-consecutive vertices on $F$. As $G$ is edge-maximal, $u$ and $v$ are adjacent. A \emph{flip} of the edge $uv$ is a modification of the embedding by re-embedding the edge $uv$ into another face, such as the face $F$.

\begin{proposition}\label{uvw distinct}
	Let $G$ be a 2-connected graph with an embedding on a surface $\Sigma$. Let $u,v,w$ be three consecutive vertices on the boundary of a face $F$, then $u,v,w$ are distinct.
\end{proposition}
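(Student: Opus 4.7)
The plan is to verify the three inequalities $u \neq v$, $v \neq w$, and $u \neq w$ separately, invoking simplicity of $G$ for the first two and 2-connectivity for the third.

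First, consecutive vertices on the boundary walk of a face $F$ are the endpoints of the edge traversed between them, so $uv$ and $vw$ are edges of $G$. Since all graphs in the paper are simple and hence loopless, this immediately forces $u \neq v$ and $v \neq w$.

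For the inequality $u \neq w$, I would use the local combinatorial structure of the embedding at $v$. The edges incident to $v$ appear in a fixed cyclic order determined by the rotation system of the embedding, and, walking along the boundary of $F$ and arriving at $v$ along the edge $uv$, the walk must depart $v$ along the edge that is the immediate successor of $uv$ in this cyclic order (in the direction induced by $F$). Thus $uv$ and $vw$ are two consecutive edges in the cyclic rotation at $v$. Because $G$ is 2-connected, $v$ has degree at least $2$, so the cyclic order contains at least two distinct edges, and consecutive entries in it must be distinct. Hence $uv$ and $vw$ are different edges of $G$; since $G$ is simple, two distinct edges at $v$ have distinct other endpoints, and so $u \neq w$.

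The only subtle step is the assertion that the successive edges of the face walk are consecutive in the cyclic order at $v$, but this is a standard property of embeddings (indeed, it is what lets rotation systems encode them combinatorially). Given that, the three inequalities follow routinely, so the entire argument is short and has no real obstacle beyond cleanly stating the rotation-system fact.
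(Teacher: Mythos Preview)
Your proof is correct and follows essentially the same line as the paper's: both argue that the two boundary edges $uv$ and $vw$ must be distinct edges of $G$, and then simplicity finishes. The only difference is in how this distinctness is justified---the paper asserts in one line that otherwise $v$ would be a cut vertex, whereas you spell out the rotation-system reason (consecutive edges in the cyclic order at $v$ are distinct once $\deg(v)\ge 2$), which is arguably the cleaner and more explicit version of the same observation.
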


\begin{proof}
	The edges $uv$ and $vw$ on the boundary are distinct as otherwise $v$ would be a cut vertex. So as $G$ is simple all three vertices must be distinct.
\end{proof}

\begin{lemma}\label{4-face}
	If a graph $G$ is Euler impure on some surface $\Sigma$, then there exists an embedding of $G$ on $\Sigma$ with a 4-face.
\end{lemma}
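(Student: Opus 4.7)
The plan is to consider, among all embeddings of $G$ on $\Sigma$, one that minimises the size of the smallest non-triangular face, and to use edge-maximality together with a single edge flip to force this minimum down to $4$.

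As a preliminary I would verify that $G$ is $2$-connected. Non-completeness and edge-maximality preclude both disconnectedness and cut-vertices: in either case the two sides of the separation can be arranged so that a representative vertex from each side shares a face with the other, and an edge across that face can be drawn without destroying embeddability, contradicting edge-maximality. With $2$-connectivity established, Proposition~\ref{uvw distinct} applies to every face of every embedding of $G$.

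Because $G$ is Euler impure, it never triangulates $\Sigma$, so every embedding of $G$ on $\Sigma$ has a non-triangular face. Let $k \ge 4$ be the minimum, taken over all embeddings, of the smallest non-triangular face, and fix an embedding realising $k$ with such a face $F$. Suppose for contradiction that $k \ge 5$, and pick three consecutive vertices $u,v,w$ on the boundary walk of $F$. Proposition~\ref{uvw distinct} makes $u,v,w$ distinct, and $k \ge 5$ makes $u$ and $w$ non-consecutive on $F$. Since $F$ is a topological disc, $uw$ could be drawn as a chord of $F$, so edge-maximality forces $uw \in E(G)$. Now flip $uw$: delete its current embedded curve and redraw it as the chord of $F$ that separates $v$ from the remaining vertices of $F$. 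This replaces $F$ by a triangle $uvw$ and a face of size $k-1 \ge 4$, while the two faces that previously met along $uw$ merge harmlessly into a single face. The resulting embedding of $G$ on $\Sigma$ therefore has a non-triangular face of size $k-1 < k$, contradicting the minimality of $k$. Hence $k = 4$ and the desired embedding exists.

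The step I expect to be the main obstacle is making the flip rigorous. One has to argue that the extremal embedding may be taken cellular, so that $F$ is a genuine open disc in which chords make sense, and one has to control the possibility that the edge $uw$ already appears elsewhere on the boundary walk of $F$ (so that ``removing $uw$'' and ``adding it as a chord of $F$'' are both unambiguous operations). Both points are manageable using $2$-connectivity: in a cellular embedding of a $2$-connected graph, boundary walks are well-behaved enough that the flip produces an embedding of exactly the claimed combinatorial type.
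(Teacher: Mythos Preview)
Your minimality approach is elegant, but the flip step has a real gap that $2$-connectivity alone does not close. You assert that flipping $uw$ into $F$ leaves a triangle $uvw$ and a face of size $k-1$, while the old neighbours of $uw$ ``merge harmlessly''. This is only correct when the old copy of $uw$ does not lie on the boundary of $F$. But Proposition~\ref{uvw distinct} only guarantees that three consecutive boundary vertices are distinct; the fourth consecutive vertex after $u,v,w$ may well be $u$ again, in which case the boundary edge immediately following $w$ \emph{is} $uw$. Then one of the two faces meeting along the old $uw$ is $F$ itself, so after the flip the would-be $(k-1)$-face merges with the face on the far side of the old $uw$. If that far face is a triangle, the merged face has size $(k-1)+3-2=k$, and the minimum non-triangular face size has not dropped at all---your contradiction evaporates.

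The paper sidesteps this by flipping across four vertices rather than three: it finds four consecutive \emph{distinct} boundary vertices $u,v,w,x$ and flips $ux$ into $F$, cutting off a $4$-face $uvwx$ directly. Because $u,v,w,x$ are distinct, the old copy of $ux$ cannot coincide with any of the three original boundary edges of this new $4$-face, so that $4$-face survives regardless of what merges elsewhere; no minimality bookkeeping is needed. The residual case---no four consecutive distinct vertices anywhere on $F$---is then dispatched separately: combined with Proposition~\ref{uvw distinct}, the ``every fourth vertex is a repeat'' pattern forces the boundary walk to have period~$3$ and length~$6$, whence $G\cong K_3$, contradicting non-completeness. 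Your argument needs precisely this residual analysis to be repaired, and once you add it you have essentially reproduced the paper's proof.
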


\begin{proof}
	Consider an embedding of $G$. As $G$ does not triangulate $\Sigma$, we can find a face $F$ of size at least 4. If $F$ has size at least 5, and there are four consecutive distinct vertices $u,v,w,x$ on the boundary of $F$, then we can obtain an embedding of $G$ with a 4-face by flipping the edge $ux$ into $F$, so we may assume that this is not the case.
	
	Let $u_1,v_1,w_1,u_2,v_2,w_2,u_3,v_3$ be consecutive vertices appearing on the boundary of $F$ (allowing to traverse the same part of the boundary twice if $F$ has size less than 8). Notice that $G$ must be 2-connected as it is edge-maximal. Any three consecutive vertices must be distinct by Proposition \ref{uvw distinct}, and every fourth vertex must be a repeat of the first vertex by our previous observation, so we must have that $u=u_1=u_2=u_3$, $v=v_1=v_2=v_3$ and $w=w_1=w_2$, where $u,v,w$ are distinct vertices. Now as the edge $uv$ appears in the same direction along the boundary at both $u_1v_1$ and $u_3v_3$, we see that $F$ is in fact a 6-face. Furthermore the boundary of $F$ contains both sides of all three edges $uv,vw$ and $wu$. So $G$ embeds with just the single face $F$. But then $G$ is isomorphic to $K_3$, a contradiction.
\end{proof}

We will require a complete list of embeddings of $K_4$ with a 4-face for the projective-plane, torus and Klein bottle. See Figures \ref{P1}, \ref{T1}, \ref{K1} and \ref{K2} for these embeddings, the shaded region indicates a given 4-face. We draw the projective plane as a disk with opposite points on its boundary identified and we draw the torus and Klein bottle as squares with opposite sides suitably identified. Figures \ref{P1} and \ref{T1} illustrates the unique such embeddings in the projective plane and torus, respectively. Figures \ref{K1} and \ref{K2} illustrate the two embeddings on the Klein bottle \cite{jackson2000atlas}.

\begin{multicols}{2}
	\center{
	\scalebox{0.36}{       
		\begin{tikzpicture}[x=0.75pt,y=0.75pt,yscale=-1,xscale=1]
		
		\draw [line width=3.75]    (420.91,210) -- (330,119.09) ;

		\draw [line width=3.75]    (330,300.91) -- (420.91,210) ;

		\draw [line width=3.75]    (239.09,210) -- (330,119.09) ;

		\draw [line width=3.75]    (239.09,210) -- (330,300.91) ;

		\draw  [line width=0.75]  (130,210) .. controls (130,99.54) and (219.54,10) .. (330,10) .. controls (440.46,10) and (530,99.54) .. (530,210) .. controls (530,320.46) and (440.46,410) .. (330,410) .. controls (219.54,410) and (130,320.46) .. (130,210) -- cycle ;
		\draw [line width=3.75]    (330,10) -- (330.29,58.8) -- (330.29,119.41) ;

		\draw [line width=3.75]    (330,410) -- (330,361.52) -- (330,300.91) ;

		\draw [line width=3.75]    (130,210) -- (178.48,210) -- (239.09,210) ;

		\draw [line width=3.75]    (530,210) -- (481.52,210) -- (420.91,210) ;

		\draw  [draw opacity=0][fill={rgb, 255:red, 0; green, 0; blue, 0 }  ,fill opacity=0.25 ][line width=2.25]  (329.71,119.41) -- (420.35,210.02) -- (329.73,300.61) -- (239.09,210) -- cycle ;
		\draw  [draw opacity=0] (120,0) -- (540,0) -- (540,420) -- (120,420) -- cycle ;

		\draw (330,119.09) node[VertexStyle, font={\bfseries\Huge}] (0) {a} ;
		
		\draw (420.91,210) node[VertexStyle, font={\bfseries\Huge}] (0) {b} ;
		
		\draw (330,300.91) node[VertexStyle, font={\bfseries\Huge}] (0) {c} ;
		
		\draw (239.09,210) node[VertexStyle, font={\bfseries\Huge}] (0) {d} ;

		\end{tikzpicture}
		
	}
\captionof{figure}{}\label{P1}}

	\columnbreak
	
	\center{
	\scalebox{0.35}{

		\begin{tikzpicture}[x=0.75pt,y=0.75pt,yscale=-1,xscale=1]
		
		\draw   (130,10) -- (530,10) -- (530,410) -- (130,410) -- cycle ;
		\draw  [color={rgb, 255:red, 0; green, 0; blue, 0 }  ,draw opacity=1 ][fill={rgb, 255:red, 0; green, 0; blue, 0 }  ,fill opacity=1 ] (120,230) -- (130,210) -- (140,230) -- (130,220) -- cycle ;
		\draw  [color={rgb, 255:red, 0; green, 0; blue, 0 }  ,draw opacity=1 ][fill={rgb, 255:red, 0; green, 0; blue, 0 }  ,fill opacity=1 ] (520,230) -- (530,210) -- (540,230) -- (530,220) -- cycle ;
		\draw  [color={rgb, 255:red, 0; green, 0; blue, 0 }  ,draw opacity=1 ][fill={rgb, 255:red, 0; green, 0; blue, 0 }  ,fill opacity=1 ] (310,0) -- (330,10) -- (310,20) -- (320,10) -- cycle ;
		\draw  [color={rgb, 255:red, 0; green, 0; blue, 0 }  ,draw opacity=1 ][fill={rgb, 255:red, 0; green, 0; blue, 0 }  ,fill opacity=1 ] (290,0) -- (310,10) -- (290,20) -- (300,10) -- cycle ;
		\draw  [color={rgb, 255:red, 0; green, 0; blue, 0 }  ,draw opacity=1 ][fill={rgb, 255:red, 0; green, 0; blue, 0 }  ,fill opacity=1 ] (310,400) -- (330,410) -- (310,420) -- (320,410) -- cycle ;
		\draw  [color={rgb, 255:red, 0; green, 0; blue, 0 }  ,draw opacity=1 ][fill={rgb, 255:red, 0; green, 0; blue, 0 }  ,fill opacity=1 ] (290,400) -- (310,410) -- (290,420) -- (300,410) -- cycle ;
		\draw [line width=3.75]    (130,350) -- (190,410) ;

		\draw [line width=3.75]    (130,70) -- (130,350) ;

		\draw [line width=3.75]    (530,70) -- (530,350) ;

		\draw [line width=3.75]    (190,10) -- (280,10) -- (470,10) ;

		\draw [line width=3.75]    (190,410) -- (470,410) ;

		\draw  [draw opacity=0][fill={rgb, 255:red, 0; green, 0; blue, 0 }  ,fill opacity=0.25 ] (130,350) -- (190,410) -- (130,410) -- cycle ;
		\draw [line width=3.75]    (470,10) -- (530,70) ;

		\draw [line width=3.75]    (470,410) -- (530,350) ;

		\draw [line width=3.75]    (130,70) -- (190,10) ;

		\draw  [draw opacity=0][fill={rgb, 255:red, 0; green, 0; blue, 0 }  ,fill opacity=0.25 ] (530,70) -- (470,10) -- (530,10) -- cycle ;
		\draw  [draw opacity=0][fill={rgb, 255:red, 0; green, 0; blue, 0 }  ,fill opacity=0.25 ] (190,10) -- (130,70) -- (130,10) -- cycle ;
		\draw  [draw opacity=0][fill={rgb, 255:red, 0; green, 0; blue, 0 }  ,fill opacity=0.25 ] (470,410) -- (530,350) -- (530,410) -- cycle ;

		\draw (130,350) node[VertexStyle, font={\bfseries\Huge}] (0) {a} ;
		
		\draw (190,410) node[VertexStyle, font={\bfseries\Huge}] (0) {b} ;
		
		\draw (130,70) node[VertexStyle, font={\bfseries\Huge}] (0) {c} ;
		
		\draw (530,70) node[VertexStyle, font={\bfseries\Huge}] (0) {c} ;
		
		\draw (530,350) node[VertexStyle, font={\bfseries\Huge}] (0) {a} ;
		
		\draw (470,10) node[VertexStyle, font={\bfseries\Huge}] (0) {d} ;
		
		\draw (470,410) node[VertexStyle, font={\bfseries\Huge}] (0) {d} ;
		
		\draw (190,10) node[VertexStyle, font={\bfseries\Huge}] (0) {b} ;

		\end{tikzpicture}	
		
	}
\captionof{figure}{}\label{T1}}

\end{multicols}

\begin{multicols}{2}
	\center{
	\scalebox{0.35}{       
		\begin{tikzpicture}[x=0.75pt,y=0.75pt,yscale=-1,xscale=1]
		
		\draw   (130,10) -- (530,10) -- (530,410) -- (130,410) -- cycle ;
		\draw  [color={rgb, 255:red, 0; green, 0; blue, 0 }  ,draw opacity=1 ][fill={rgb, 255:red, 0; green, 0; blue, 0 }  ,fill opacity=1 ] (120,230) -- (130,210) -- (140,230) -- (130,220) -- cycle ;
		\draw  [color={rgb, 255:red, 0; green, 0; blue, 0 }  ,draw opacity=1 ][fill={rgb, 255:red, 0; green, 0; blue, 0 }  ,fill opacity=1 ] (540,190) -- (530,210) -- (520,190) -- (530,200) -- cycle ;
		\draw  [color={rgb, 255:red, 0; green, 0; blue, 0 }  ,draw opacity=1 ][fill={rgb, 255:red, 0; green, 0; blue, 0 }  ,fill opacity=1 ] (310,0) -- (330,10) -- (310,20) -- (320,10) -- cycle ;
		\draw  [color={rgb, 255:red, 0; green, 0; blue, 0 }  ,draw opacity=1 ][fill={rgb, 255:red, 0; green, 0; blue, 0 }  ,fill opacity=1 ] (290,0) -- (310,10) -- (290,20) -- (300,10) -- cycle ;
		\draw  [color={rgb, 255:red, 0; green, 0; blue, 0 }  ,draw opacity=1 ][fill={rgb, 255:red, 0; green, 0; blue, 0 }  ,fill opacity=1 ] (310,400) -- (330,410) -- (310,420) -- (320,410) -- cycle ;
		\draw  [color={rgb, 255:red, 0; green, 0; blue, 0 }  ,draw opacity=1 ][fill={rgb, 255:red, 0; green, 0; blue, 0 }  ,fill opacity=1 ] (290,400) -- (310,410) -- (290,420) -- (300,410) -- cycle ;
		\draw [line width=3.75]    (190,350) -- (470,350) ;

		\draw [line width=3.75]    (130,350) -- (190,350) ;

		\draw [line width=3.75]    (190,350) -- (190,410) ;

		\draw  [draw opacity=0][fill={rgb, 255:red, 0; green, 0; blue, 0 }  ,fill opacity=0.25 ] (130,350) -- (190,350) -- (190,410) -- (130,410) -- cycle ;
		\draw [line width=3.75]    (470,350) -- (470,410) ;

		\draw [line width=3.75]    (470,350) -- (530,350) ;

		\draw  [draw opacity=0][fill={rgb, 255:red, 0; green, 0; blue, 0 }  ,fill opacity=0.25 ] (470,350) -- (530,350) -- (530,410) -- (470,410) -- cycle ;
		\draw [line width=3.75]    (470,70) -- (190,70) ;

		\draw [line width=3.75]    (530,70) -- (470,70) ;

		\draw [line width=3.75]    (470,70) -- (470,10) ;

		\draw  [draw opacity=0][fill={rgb, 255:red, 0; green, 0; blue, 0 }  ,fill opacity=0.25 ] (530,70) -- (470,70) -- (470,10) -- (530,10) -- cycle ;
		\draw [line width=3.75]    (190,70) -- (190,10) ;

		\draw [line width=3.75]    (190,70) -- (130,70) ;

		\draw  [draw opacity=0][fill={rgb, 255:red, 0; green, 0; blue, 0 }  ,fill opacity=0.25 ] (190,70) -- (130,70) -- (130,10) -- (190,10) -- cycle ;
		
		\draw (190,350) node[VertexStyle, font={\bfseries\Huge}] (0) {c} ;
		
		\draw (470,350) node[VertexStyle, font={\bfseries\Huge}] (0) {a} ;
		
		\draw (190,70) node[VertexStyle, font={\bfseries\Huge}] (0) {b} ;
		
		\draw (470,70) node[VertexStyle, font={\bfseries\Huge}] (0) {d} ;
		
		\tikzset{VertexStyle/.style = {shape= circle, inner sep= 0pt, outer sep= 0pt, minimum size = 30 pt, }}  
		
		\draw (190,10) node[VertexStyle, font={\bfseries\Huge}] (0) {} ;
		
	\end{tikzpicture}
		
	}
	\captionof{figure}{}\label{K1}}

	\columnbreak

	\center{
	\scalebox{0.35}{
		\begin{tikzpicture}[x=0.75pt,y=0.75pt,yscale=-1,xscale=1]
		
		\draw   (130,10) -- (530,10) -- (530,410) -- (130,410) -- cycle ;
		\draw  [color={rgb, 255:red, 0; green, 0; blue, 0 }  ,draw opacity=1 ][fill={rgb, 255:red, 0; green, 0; blue, 0 }  ,fill opacity=1 ] (120,230) -- (130,210) -- (140,230) -- (130,220) -- cycle ;
		\draw  [color={rgb, 255:red, 0; green, 0; blue, 0 }  ,draw opacity=1 ][fill={rgb, 255:red, 0; green, 0; blue, 0 }  ,fill opacity=1 ] (540,190) -- (530,210) -- (520,190) -- (530,200) -- cycle ;
		\draw  [color={rgb, 255:red, 0; green, 0; blue, 0 }  ,draw opacity=1 ][fill={rgb, 255:red, 0; green, 0; blue, 0 }  ,fill opacity=1 ] (310,0) -- (330,10) -- (310,20) -- (320,10) -- cycle ;
		\draw  [color={rgb, 255:red, 0; green, 0; blue, 0 }  ,draw opacity=1 ][fill={rgb, 255:red, 0; green, 0; blue, 0 }  ,fill opacity=1 ] (290,0) -- (310,10) -- (290,20) -- (300,10) -- cycle ;
		\draw  [color={rgb, 255:red, 0; green, 0; blue, 0 }  ,draw opacity=1 ][fill={rgb, 255:red, 0; green, 0; blue, 0 }  ,fill opacity=1 ] (310,400) -- (330,410) -- (310,420) -- (320,410) -- cycle ;
		\draw  [color={rgb, 255:red, 0; green, 0; blue, 0 }  ,draw opacity=1 ][fill={rgb, 255:red, 0; green, 0; blue, 0 }  ,fill opacity=1 ] (290,400) -- (310,410) -- (290,420) -- (300,410) -- cycle ;
		\draw [line width=3.75]    (130,350) -- (190,410) ;

		\draw [line width=3.75]    (130,70) -- (130,350) ;

		\draw [line width=3.75]    (530,70) -- (530,350) ;

		\draw [line width=3.75]    (190,10) -- (280,10) -- (470,10) ;

		\draw [line width=3.75]    (190,410) -- (470,410) ;

		\draw  [draw opacity=0][fill={rgb, 255:red, 0; green, 0; blue, 0 }  ,fill opacity=0.25 ] (130,350) -- (190,410) -- (130,410) -- cycle ;
		\draw [line width=3.75]    (470,10) -- (530,70) ;

		\draw [line width=3.75]    (470,410) -- (530,350) ;

		\draw [line width=3.75]    (130,70) -- (190,10) ;

		\draw  [draw opacity=0][fill={rgb, 255:red, 0; green, 0; blue, 0 }  ,fill opacity=0.25 ] (530,70) -- (470,10) -- (530,10) -- cycle ;
		\draw  [draw opacity=0][fill={rgb, 255:red, 0; green, 0; blue, 0 }  ,fill opacity=0.25 ] (190,10) -- (130,70) -- (130,10) -- cycle ;
		\draw  [draw opacity=0][fill={rgb, 255:red, 0; green, 0; blue, 0 }  ,fill opacity=0.25 ] (470,410) -- (530,350) -- (530,410) -- cycle ;
		
		\draw (130,350) node[VertexStyle, font={\bfseries\Huge}] (0) {a} ;
		
		\draw (190,410) node[VertexStyle, font={\bfseries\Huge}] (0) {b} ;
		
		\draw (130,70) node[VertexStyle, font={\bfseries\Huge}] (0) {c} ;
		
		\draw (530,70) node[VertexStyle, font={\bfseries\Huge}] (0) {a} ;
		
		\draw (530,350) node[VertexStyle, font={\bfseries\Huge}] (0) {c} ;
		
		\draw (470,10) node[VertexStyle, font={\bfseries\Huge}] (0) {d} ;
		
		\draw (470,410) node[VertexStyle, font={\bfseries\Huge}] (0) {d} ;
		
		\draw (190,10) node[VertexStyle, font={\bfseries\Huge}] (0) {b} ;

		\end{tikzpicture}

	}
	\captionof{figure}{}\label{K2}}
\end{multicols}

For completeness, we remark that there is no such embedding in the plane, so by Lemma \ref{4-face} there is no Euler impure planar graph.

We now briefly outline our approach to Theorems \ref{projectivePlane}, \ref{klein} and \ref{torus}. We will start by supposing that an Euler impure graph $G$ exists and consider an embedding with a 4-face inducing $K_4$. Then we begin to deduce how the vertices, edges and faces must embed until we either arrive at a contradiction or deduce that $G$ is isomorphic to some known Euler impure graph. Shaded regions in the figures indicate faces while unshaded regions could still contain any number of additional edges or vertices.

We begin with the projective plane.

\begin{proof}[Proof of Theorem \ref{projectivePlane}]
	Suppose that $G$ is an Euler impure projective-planar graph. Then by Lemma \ref{4-face} there exists an embedding of $G$ with a 4-face $abcd$ whose vertices induce $K_4$. Such a 4-face has a unique embedding (see Figure \ref{P1}).
	
	At least one of $bcad$ or $abdc$ must not be a face, else $G\cong K_4$. Also at least one must be a face, as otherwise we could add an edge from a vertex inside $bcad$ to a vertex inside $abdc$ after flipping the edge $bd$ into $abcd$. So without loss of generality we may assume that $bcad$ is a face and $abdc$ contains all remaining vertices of $G$.
	
	Let $e$ be the vertex adjacent to $b$ appearing first anticlockwise from the edge $bd$. Then as $bd$ is flippable, $e$ must be adjacent to $a$, $c$ and $d$ as well (see Figure \ref{P2}).

\begin{center}

	\center{
	\scalebox{0.36}{
	
	\begin{tikzpicture}[x=0.75pt,y=0.75pt,yscale=-1,xscale=1]
	
	\draw  [draw opacity=0][fill={rgb, 255:red, 0; green, 0; blue, 0 }  ,fill opacity=0.25 ] (130,210.32) .. controls (130,99.86) and (219.54,10.32) .. (330,10.32) -- (330,210.32) -- cycle ;
	\draw [line width=3.75]    (420.91,210) -- (330,119.09) ;

	\draw [line width=3.75]    (330,300.91) -- (420.91,210) ;

	\draw [line width=3.75]    (239.09,210) -- (330,119.09) ;

	\draw [line width=3.75]    (239.09,210) -- (330,300.91) ;

	\draw  [line width=0.75]  (130,210) .. controls (130,99.54) and (219.54,10) .. (330,10) .. controls (440.46,10) and (530,99.54) .. (530,210) .. controls (530,320.46) and (440.46,410) .. (330,410) .. controls (219.54,410) and (130,320.46) .. (130,210) -- cycle ;
	\draw [line width=3.75]    (330,10) -- (330.29,58.8) -- (330.29,119.41) ;

	\draw [line width=3.75]    (330,410) -- (330,361.52) -- (330,300.91) ;

	\draw [line width=3.75]    (130,210) -- (178.48,210) -- (239.09,210) ;

	\draw [line width=3.75]    (530,210) -- (481.52,210) -- (420.91,210) ;

	\draw [line width=3.75]    (420.91,210) -- (470,70) ;

	\draw [line width=3.75]    (330.29,119.41) -- (470,70) ;

	\draw [line width=3.75]    (190.02,350.02) -- (329.73,300.61) ;

	\draw [line width=3.75]    (190.02,350.02) -- (239.11,210.02) ;

	\draw  [draw opacity=0][fill={rgb, 255:red, 0; green, 0; blue, 0 }  ,fill opacity=0.25 ] (530,210) .. controls (530,210) and (530,210) .. (530,210) .. controls (530,320.46) and (440.46,410) .. (330,410) -- (330,210) -- cycle ;
	\draw  [draw opacity=0][fill={rgb, 255:red, 0; green, 0; blue, 0 }  ,fill opacity=0.25 ] (330,119.41) -- (419.71,210) -- (330,210) -- cycle ;
	\draw  [draw opacity=0][fill={rgb, 255:red, 0; green, 0; blue, 0 }  ,fill opacity=0.25 ] (330,300.91) -- (240.29,210.32) -- (330,210.32) -- cycle ;
	\draw  [draw opacity=0] (120,0) -- (540,0) -- (540,420) -- (120,420) -- cycle ;
	
	\draw (330,119.09) node[VertexStyle, font={\bfseries\Huge}] (0) {a} ;
	
	\draw (420.91,210) node[VertexStyle, font={\bfseries\Huge}] (0) {b} ;
	
	\draw (330,300.91) node[VertexStyle, font={\bfseries\Huge}] (0) {c} ;
	
	\draw (239.09,210) node[VertexStyle, font={\bfseries\Huge}] (0) {d} ;
	
	\draw (190.02,350.02) node[VertexStyle, font={\bfseries\Huge}] (0) {e} ;
	
	\draw (470,70) node[VertexStyle, font={\bfseries\Huge}] (0) {e} ;

	\end{tikzpicture}
	
}

\captionof{figure}{}\label{P2}}

\end{center}

Now as each edge $ab$, $bd$, $dc$ and $da$ is flippable, similarly to before we see that each of the triangles $aeb$, $bed$, $dec$ and $dea$ must be faces. Therefore $G\cong K_5$, a contradiction.
\end{proof}

For the torus and Klein bottle we will continue to repeatedly use this idea of looking for edges we might add after flipping edges.

We prove a technical Lemma which we shall apply when characterizing the Euler impure graphs on torus and then again the Klein bottle. A closed surface with boundary is a \emph{$2$-cell} if it is homeomorphic to the disk $\mathbb{D}$.

\begin{lemma}\label{>--<}
	Suppose that $G$ is an Euler impure graph on some surface $\Sigma$ with an embedding having a closed walk $uvwxy_1y_2z_1z_2$ of length 8 bounding a 
	2-cell $C$ (to the right of this walk). Further suppose that $C$ contains no additional edges between vertices of the walk, $C$ contains at least two additional vertices, $\{y_1,y_2\}=\{u,v\}$ and $\{z_1,z_2\}=\{w,x\}$, $G[\{u,v,w,x\}]=K_4$, and the edge $uv$ is flippable to some face outside $C$. Then there are adjacent vertices $e$ and $f$ inside 
	$C$ such that $uve$ and $y_1y_2f$ are faces (see Figure \ref{L5}) after possibly flipping some subset of the edges $\{eu,ev,fu,fv\}$.
\end{lemma}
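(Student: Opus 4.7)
The plan is to exploit the flippability of $uv$ together with the edge-maximality of $G$, identifying $e$ and $f$ from the local structure at $uv$ inside $\bar C$.

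Let $F_1$ and $F_2$ denote the two faces of $G$ incident to the edge $uv$ in the given embedding. Since the walk bounding $C$ traverses $uv$ twice, both $F_1$ and $F_2$ lie inside $C$. I would define $e$ so that the boundary walk of $F_1$ reads $\ldots, e, u, v, \ldots$ (equivalently, $ue$ is the neighbour of $uv$ in the rotation at $u$ on the $F_1$-side), and define $f$ analogously from $F_2$'s walk $\ldots, v, u, f, \ldots$.

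I would first establish $ef \in E(G)$. Flip $uv$ into the guaranteed target face outside $C$; the faces $F_1$ and $F_2$ then merge into a single face whose boundary walk contains the consecutive fragment $\ldots, e, u, f, \ldots$, and so edge-maximality (any two distinct vertices on a common face must be joined by an edge of $G$, since otherwise that non-edge could be drawn as a chord of the face) forces $ef \in E(G)$. The degenerate case $e = f$ is excluded by inspecting the rotation at $u$.

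Next I would realize the two triangular faces. If $F_1$ is already the triangle $uve$, no flip is needed on this side; otherwise $F_1$ has at least four distinct boundary vertices, $e$ and $v$ are non-consecutive on $F_1$, and edge-maximality yields $ev \in E(G)$, currently drawn in some face other than $F_1$. Flipping $ev$ into $F_1$ (valid because $F_1$ is a 2-cell with $e,v$ non-consecutive on its boundary) splits $F_1$ into the triangle $uve$ and a smaller face. The symmetric flip of $fv$ produces $y_1 y_2 f = uvf$ as a face. Both edges being flipped belong to the permitted set $\{eu, ev, fu, fv\}$.

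The main obstacle is that the two flips may interact: in particular $ev$ might currently lie on $F_2$'s boundary, in which case flipping it out simultaneously modifies $F_2$. I would handle this by ordering the flips appropriately and checking that the relevant incidences of $u,v,e,f$ survive each flip so that the subsequent flip of $fv$ still produces $uvf$. Degenerate configurations, such as $F_1 = F_2$ (which can arise on non-orientable surfaces) or $e,f$ coinciding with a boundary vertex in $\{w,x\}$, are dealt with by exploiting the $K_4$-structure on $\{u,v,w,x\}$ together with the hypothesis that $C$ contains at least two additional vertices, ensuring enough room inside $\bar C$ for the constructions above.
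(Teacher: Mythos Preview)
Your overall strategy---pick the rotation-neighbours of $uv$ at $u$ on each side, call them $e$ and $f$, flip $uv$ out to force $ef\in E(G)$, then flip $ev$ and $fv$ in to carve off the triangles---is exactly the mechanism the paper uses, and the parts you do write out (the $ef$ argument, the triangle-splitting) are fine. But the step you defer to a sentence is the one that carries all the weight.

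The rotation-neighbour of $uv$ at $u$ on the $F_1$-side need not be an interior vertex of $C$: if $u$ has no edge into the interior of $C$ on that side, your $e$ is simply $z_2\in\{w,x\}$, and symmetrically your $f$ may be $z_1$. The lemma's conclusion requires $e,f$ to lie \emph{inside} $C$, so this is not a cosmetic issue. The paper handles the $e$-side by observing that, if $u$ has no interior neighbour, the face containing $z_2uv$ must contain some interior vertex (since $C$ has at least two), and one then flips $ue$ in. The $f$-side is harder: after $uve$ is a face, one must show that the face carrying $y_1y_2z_1$ still contains an interior vertex other than $e$. The paper spends an entire case analysis on this (the configurations of Figures~\ref{L3} and~\ref{L4}), showing that if that face meets only $\{e,u,v,w,x\}$ then a short sequence of forced flips pins down every remaining face and leaves $C$ with just the single interior vertex $e$, contradicting the ``at least two'' hypothesis. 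This is precisely where the $K_4$-structure and the vertex-count hypothesis are actually used, and it is not a routine check.

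So your sketch is correct in outline but leaves the genuine difficulty untouched: you have identified the right hypotheses for the degenerate cases, but the argument that rules them out is the core of the proof, not a detail to be filled in later.
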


\begin{proof}
	If $u$ is connected to at least one vertex by an edge inside $C$, then let $e$ be the first such neighbour appearing clockwise after $v$. 
If $u$ has no such neighbor in $C$, 	
there must exist some vertex $e$ in $C$ lying on the same face as $z_2uv$. As $G$ is edge-maximal, $u$ is adjacent to $e$. We now flip the edge $ue$ so that it connects $u$ and $e$ in $C$. Now $v$ is adjacent to $e$ as $G$ is edge-maximal. By possibly flipping the edge $ve$ we get $uve$ as a face inside $C$ (see Figure \ref{L2}).
	
	Suppose for sake of contradiction that there is no edge incident to $y_2$ inside $C$, and further that the face $F$ in $C$ which $y_1y_2z_1$ lies on contains only vertices from $\{e,u,v,w,x,y_1,y_2,z_1,z_2\}$. Now $e$ must be a vertex of this face and as $e$ is adjacent to both $u$ and $v$, $x$ is also on this face. 
	By a possible edge flip, we may assume that there is an edge between $e$ and $z_1$. Now there are two cases to consider, the first being that there is an edge between $e$ and $z_2$, in which case $xy_1y_2z_1evw$ is a face (see Figure \ref{L3}, remember that $\{z_1,z_2\}=\{w,x\}$). In this case both $z_1z_2e$ and $ez_2u$ must be faces as edges $ez_1$ and $ez_2$ can be flipped to $ew$ and $ex$, contradicting the fact that there should be at least two additional vertices. The second case is that there is an edge between $e$ and either $w$ or $x$, in this case $y_1y_2z_1ewx$ or $y_1y_2z_1ex$ respectively are faces (see Figure \ref{L4} for when there is an edge between $e$ and $w$). Now, as edges $eu$ and $ev$ can be flipped to $ey_1$ and $ey_2$, we see that $z_1z_2ue$ is a face and either $evw$ or $evwx$ respectively are faces. We again have a contradiction as there should be at least two additional vertices.
	
	Now suppose again that there is no edge incident to $y_2$ inside $C$. By the previous discussion, the face in $C$ which $y_1y_2z_1$ lies on must contain some vertex $f$ distinct from the vertices $\{e,u,v,w,x,y_1,y_2,z_1,z_2\}$. As $G$ is edge-maximal, $y_2$ and $f$ must be adjacent, and so we can flip $y_2f$ into that face.
	
	Hence we may assume that there exists some edge incident to $y_2$ inside $C$. Let $y_2f$ be the first such edge appearing anticlockwise after $y_1$. Now similarly to before, as $G$ is edge-maximal, $f$ must be adjacent to $y_1$. By possibly flipping the edge $y_1f$ we get $y_1y_2f$ as a face. Now finally, as the edge $uv$ is flippable, $e$ must be adjacent to $f$ as required (and as depicted in Figure \ref{L5}).
\end{proof}

\begin{multicols}{2}
	\center{
	\scalebox{0.35}{       
		
		\begin{tikzpicture}[x=0.75pt,y=0.75pt,yscale=-1,xscale=1]
		
		\draw [line width=3.75]    (130,350) -- (190,410) ;

		\draw [line width=3.75]    (130,70) -- (130,350) ;

		\draw [line width=3.75]    (530,70) -- (530,350) ;

		\draw [line width=3.75]    (470,10) -- (530,70) ;

		\draw [line width=3.75]    (470,410) -- (530,350) ;

		\draw [line width=3.75]    (130,70) -- (169.75,30.25) -- (190,10) ;

		\draw [line width=3.75]    (190,10) -- (280,10) -- (470,10) ;

		\draw [line width=3.75]    (190,410) -- (280,410) -- (470,410) ;

		\draw [line width=3.75]    (190,210) -- (130,70) ;

		\draw [line width=3.75]    (190,210) -- (130,350) ;

		\draw  [draw opacity=0][fill={rgb, 255:red, 0; green, 0; blue, 0 }  ,fill opacity=0.25 ] (190,210) -- (130,350) -- (130,70) -- cycle ;
		
		\draw (130,350) node[VertexStyle, font={\bfseries\Huge}] (0) {u} ;
		
		\draw (190,410) node[VertexStyle, font={\bfseries\Huge}] (0) {z$_2$} ;
		
		\draw (130,70) node[VertexStyle, font={\bfseries\Huge}] (0) {v} ;
		
		\draw (530,70) node[VertexStyle, font={\bfseries\Huge}] (0) {y$_1$} ;
		
		\draw (530,350) node[VertexStyle, font={\bfseries\Huge}] (0) {y$_2$} ;
		
		\draw (470,10) node[VertexStyle, font={\bfseries\Huge}] (0) {x} ;
		
		\draw (470,410) node[VertexStyle, font={\bfseries\Huge}] (0) {z$_1$} ;
		
		\draw (190,10) node[VertexStyle, font={\bfseries\Huge}] (0) {w} ;
		
		\draw (190,210) node[VertexStyle, font={\bfseries\Huge}] (0) {e} ;

		\end{tikzpicture}

	}
	
	\captionof{figure}{}\label{L2}}

	\columnbreak
	
	\center{
	\scalebox{0.35}{
		
		\begin{tikzpicture}[x=0.75pt,y=0.75pt,yscale=-1,xscale=1]
		
		\draw [line width=3.75]    (130,350) -- (190,410) ;

		\draw [line width=3.75]    (130,70) -- (130,350) ;

		\draw [line width=3.75]    (530,70) -- (530,350) ;

		\draw [line width=3.75]    (470,10) -- (530,70) ;

		\draw [line width=3.75]    (470,410) -- (530,350) ;

		\draw [line width=3.75]    (130,70) -- (169.75,30.25) -- (190,10) ;

		\draw [line width=3.75]    (190,10) -- (280,10) -- (470,10) ;

		\draw [line width=3.75]    (190,410) -- (280,410) -- (470,410) ;

		\draw [line width=3.75]    (190,210) -- (130,70) ;

		\draw [line width=3.75]    (190,210) -- (130,350) ;

		\draw  [draw opacity=0][fill={rgb, 255:red, 0; green, 0; blue, 0 }  ,fill opacity=0.25 ] (190,210) -- (130,350) -- (130,70) -- cycle ;
		\draw [line width=3.75]    (190,210) -- (190,410) ;

		\draw [line width=3.75]    (190,210) -- (470,410) ;

		\draw  [draw opacity=0][fill={rgb, 255:red, 0; green, 0; blue, 0 }  ,fill opacity=0.25 ] (130,71) -- (190,10) -- (190,210) -- cycle ;
		\draw  [draw opacity=0][fill={rgb, 255:red, 0; green, 0; blue, 0 }  ,fill opacity=0.25 ] (470.5,410) -- (390,350) -- (530,350) -- cycle ;
		\draw  [draw opacity=0][fill={rgb, 255:red, 0; green, 0; blue, 0 }  ,fill opacity=0.25 ] (390,350) -- (190,210) -- (390,210) -- cycle ;
		\draw  [draw opacity=0][fill={rgb, 255:red, 0; green, 0; blue, 0 }  ,fill opacity=0.25 ] (470,10) -- (530,70) -- (470,70) -- cycle ;
		\draw  [draw opacity=0][fill={rgb, 255:red, 0; green, 0; blue, 0 }  ,fill opacity=0.25 ] (190,10) -- (470,10) -- (470,210) -- (190,210) -- cycle ;
		\draw  [draw opacity=0][fill={rgb, 255:red, 0; green, 0; blue, 0 }  ,fill opacity=0.25 ] (390,210) -- (530,210) -- (530,350) -- (390,350) -- cycle ;
		\draw  [draw opacity=0][fill={rgb, 255:red, 0; green, 0; blue, 0 }  ,fill opacity=0.25 ] (470,70) -- (530,70) -- (530,210) -- (470,210) -- cycle ;
		
		\draw (130,350) node[VertexStyle, font={\bfseries\Huge}] (0) {u} ;
		
		\draw (190,410) node[VertexStyle, font={\bfseries\Huge}] (0) {z$_2$} ;
		
		\draw (130,70) node[VertexStyle, font={\bfseries\Huge}] (0) {v} ;
		
		\draw (530,70) node[VertexStyle, font={\bfseries\Huge}] (0) {y$_1$} ;
		
		\draw (530,350) node[VertexStyle, font={\bfseries\Huge}] (0) {y$_2$} ;
		
		\draw (470,10) node[VertexStyle, font={\bfseries\Huge}] (0) {x} ;
		
		\draw (470,410) node[VertexStyle, font={\bfseries\Huge}] (0) {z$_1$} ;
		
		\draw (190,10) node[VertexStyle, font={\bfseries\Huge}] (0) {w} ;
		
		\draw (190,210) node[VertexStyle, font={\bfseries\Huge}] (0) {e} ;

		\end{tikzpicture}

	}
	
	\captionof{figure}{}\label{L3}}

\end{multicols}

\begin{multicols}{2}
	\center{
	\scalebox{0.35}{       
		\begin{tikzpicture}[x=0.75pt,y=0.75pt,yscale=-1,xscale=1]
		
		\draw [line width=3.75]    (130,350) -- (190,410) ;

		\draw [line width=3.75]    (130,70) -- (130,350) ;

		\draw [line width=3.75]    (530,70) -- (530,350) ;

		\draw [line width=3.75]    (470,10) -- (530,70) ;

		\draw [line width=3.75]    (470,410) -- (530,350) ;

		\draw [line width=3.75]    (130,70) -- (169.75,30.25) -- (190,10) ;

		\draw [line width=3.75]    (190,10) -- (280,10) -- (470,10) ;

		\draw [line width=3.75]    (190,410) -- (280,410) -- (470,410) ;

		\draw [line width=3.75]    (190,210) -- (130,70) ;

		\draw [line width=3.75]    (190,210) -- (130,350) ;

		\draw  [draw opacity=0][fill={rgb, 255:red, 0; green, 0; blue, 0 }  ,fill opacity=0.25 ] (190,210) -- (130,350) -- (130,70) -- cycle ;
		\draw [line width=3.75]    (190,210) -- (190,10) ;

		\draw [line width=3.75]    (190,210) -- (470,410) ;

		\draw  [draw opacity=0][fill={rgb, 255:red, 0; green, 0; blue, 0 }  ,fill opacity=0.25 ] (470.5,410) -- (390,350) -- (530,350) -- cycle ;
		\draw  [draw opacity=0][fill={rgb, 255:red, 0; green, 0; blue, 0 }  ,fill opacity=0.25 ] (390,350) -- (190,210) -- (390,210) -- cycle ;
		\draw  [draw opacity=0][fill={rgb, 255:red, 0; green, 0; blue, 0 }  ,fill opacity=0.25 ] (470,10) -- (530,70) -- (470,70) -- cycle ;
		\draw  [draw opacity=0][fill={rgb, 255:red, 0; green, 0; blue, 0 }  ,fill opacity=0.25 ] (190,10) -- (470,10) -- (470,210) -- (190,210) -- cycle ;
		\draw  [draw opacity=0][fill={rgb, 255:red, 0; green, 0; blue, 0 }  ,fill opacity=0.25 ] (390,210) -- (530,210) -- (530,350) -- (390,350) -- cycle ;
		\draw  [draw opacity=0][fill={rgb, 255:red, 0; green, 0; blue, 0 }  ,fill opacity=0.25 ] (470,70) -- (530,70) -- (530,210) -- (470,210) -- cycle ;
		
		\draw (130,350) node[VertexStyle, font={\bfseries\Huge}] (0) {u} ;
		
		\draw (190,410) node[VertexStyle, font={\bfseries\Huge}] (0) {z$_2$} ;
		
		\draw (130,70) node[VertexStyle, font={\bfseries\Huge}] (0) {v} ;
		
		\draw (530,70) node[VertexStyle, font={\bfseries\Huge}] (0) {y$_1$} ;
		
		\draw (530,350) node[VertexStyle, font={\bfseries\Huge}] (0) {y$_2$} ;
		
		\draw (470,10) node[VertexStyle, font={\bfseries\Huge}] (0) {x} ;
		
		\draw (470,410) node[VertexStyle, font={\bfseries\Huge}] (0) {z$_1$} ;
		
		\draw (190,10) node[VertexStyle, font={\bfseries\Huge}] (0) {w} ;
		
		\draw (190,210) node[VertexStyle, font={\bfseries\Huge}] (0) {e} ;

		\end{tikzpicture}
		
	}
	
	\captionof{figure}{}\label{L4}}

	\columnbreak

	\center{
	\scalebox{0.35}{
		
	\begin{tikzpicture}[x=0.75pt,y=0.75pt,yscale=-1,xscale=1]
	
	\draw [line width=3.75]    (130,350) -- (190,410) ;

	\draw [line width=3.75]    (130,70) -- (130,350) ;

	\draw [line width=3.75]    (530,70) -- (530,350) ;

	\draw [line width=3.75]    (470,10) -- (530,70) ;

	\draw [line width=3.75]    (470,410) -- (530,350) ;

	\draw [line width=3.75]    (130,70) -- (169.75,30.25) -- (190,10) ;

	\draw [line width=3.75]    (190,10) -- (280,10) -- (470,10) ;

	\draw [line width=3.75]    (190,410) -- (280,410) -- (470,410) ;

	\draw [line width=3.75]    (190,210) -- (130,70) ;

	\draw [line width=3.75]    (190,210) -- (130,350) ;

	\draw  [draw opacity=0][fill={rgb, 255:red, 0; green, 0; blue, 0 }  ,fill opacity=0.25 ] (190,210) -- (130,350) -- (130,70) -- cycle ;
	\draw [line width=3.75]    (470,210) -- (530,350) ;

	\draw [line width=3.75]    (470,210) -- (530,70) ;

	\draw  [draw opacity=0][fill={rgb, 255:red, 0; green, 0; blue, 0 }  ,fill opacity=0.25 ] (470,210) -- (530,70) -- (530,350) -- cycle ;
	
	\draw [line width=3.75]    (190,210) -- (280,210) -- (470,210) ;

	\draw (130,350) node[VertexStyle, font={\bfseries\Huge}] (0) {u} ;
	
	\draw (190,410) node[VertexStyle, font={\bfseries\Huge}] (0) {z$_2$} ;
	
	\draw (130,70) node[VertexStyle, font={\bfseries\Huge}] (0) {v} ;
	
	\draw (530,70) node[VertexStyle, font={\bfseries\Huge}] (0) {y$_1$} ;
	
	\draw (530,350) node[VertexStyle, font={\bfseries\Huge}] (0) {y$_2$} ;
	
	\draw (470,10) node[VertexStyle, font={\bfseries\Huge}] (0) {x} ;
	
	\draw (470,410) node[VertexStyle, font={\bfseries\Huge}] (0) {z$_1$} ;
	
	\draw (190,10) node[VertexStyle, font={\bfseries\Huge}] (0) {w} ;
	
	\draw (190,210) node[VertexStyle, font={\bfseries\Huge}] (0) {e} ;
	
	\draw (470,210) node[VertexStyle, font={\bfseries\Huge}] (0) {f} ;

	\end{tikzpicture}

	}
	
	\captionof{figure}{}\label{L5}}

\end{multicols}

\begin{lemma}\label{T+K lemma}
	Suppose that $G$ is an Euler impure graph on some surface $\Sigma$ with an embedding having a closed walk $acbdy_1y_2db$ of length 8 bounding a 2-cell $C$ 
	containing no additional edges between vertices of the walk, at least three additional vertices and such that $\{y_1,y_2\}=\{a,c\}$ where $G[\{a,b,c,d\}]=K_4$ with the edges $ac$ and $bd$ being flippable to some face outside $C$. Then there exists an embedding with two pairs of adjacent vertices $e,f$ and $x,y$ with $|\{e,f\}\cap \{x,y\}|=1$ in the interior of $C$ such that $ace$, $y_1y_2f$, $bdx$ and $dby$ are faces inside $C$.
\end{lemma}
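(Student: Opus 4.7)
The strategy is to apply Lemma~\ref{>--<} twice --- once to the walk anchored at the flippable edge $ac$ and once to the cyclic rotation anchored at the flippable edge $bd$ --- and then verify that the two vertex pairs produced overlap in exactly one vertex. For the first invocation I take $(u,v,w,x,y_1,y_2,z_1,z_2)=(a,c,b,d,y_1,y_2,d,b)$; every hypothesis of Lemma~\ref{>--<} carries over directly (the $K_4$ on $\{a,b,c,d\}$, the flippability of $ac$ outside $C$, and at least two --- indeed three --- interior vertices), and the lemma produces adjacent interior vertices $e,f$ of $C$ with $ace$ and $y_1y_2f$ as faces.

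For the second invocation I rotate the walk to $bdy_1y_2dbac$ and take $(u',v',w',x')=(b,d,y_1,y_2)$. The conditions $\{y_1',y_2'\}=\{d,b\}=\{u',v'\}$, $\{z_1',z_2'\}=\{a,c\}=\{y_1,y_2\}=\{w',x'\}$, $G[\{b,d,y_1,y_2\}]=G[\{a,b,c,d\}]=K_4$ (using $\{y_1,y_2\}=\{a,c\}$), and flippability of $bd$ outside $C$ all hold, so Lemma~\ref{>--<} now yields adjacent interior vertices $x,y$ of $C$ with $bdx$ and $dby$ as faces.

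To verify $|\{e,f\}\cap\{x,y\}|=1$, view $C$ as an abstract closed disk whose boundary traverses the length-$8$ walk once. The two triangles $ace$ and $acf$, joined by the edge $ef$, form a topological arc from one boundary copy of $ac$ to the other; by the Jordan curve theorem this arc partitions $C$ into two sub-$2$-cells, one containing the $b\to d$ copy of $bd$ (on which $bdx$ sits) and one containing the $d\to b$ copy (on which $dby$ sits). Hence $x$ and $y$ lie in opposite sub-cells, so the edge $xy$ embedded in $C$ must pass through a vertex of $\{e,f\}$, forcing $|\{e,f\}\cap\{x,y\}|\ge 1$. Conversely, if $\{e,f\}=\{x,y\}$, then $\{a,b,c,d,e,f\}$ induces $K_6$ and the interior of $C$ consists of the four triangles $ace,acf,bde,dbf$ together with a few residual regions meeting along $ef$; the third interior vertex $g$ guaranteed by hypothesis must lie strictly inside one such residual region, and a short case analysis on which region contains $g$ exhibits an edge flip that rebuilds one of $bdx$ or $dby$ with $g$ as its third vertex, giving an embedding in which the overlap is exactly one.

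The main obstacle is this final step --- ruling out the coincidence $\{e,f\}=\{x,y\}$ by exploiting the third interior vertex and producing an explicit flip --- since the Jordan-curve separation for the other direction is routine once $C$ is lifted to a disk, and the two invocations of Lemma~\ref{>--<} are mechanical.
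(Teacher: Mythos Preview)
Your overall architecture matches the paper's: apply Lemma~\ref{>--<} once with $(u,v)=(a,c)$ to get the adjacent pair $e,f$ and faces $ace$, $y_1y_2f$, then again with $(u,v)=(b,d)$ to get the adjacent pair $x,y$ and faces $bdx$, $dby$; the Jordan-curve separation you describe is exactly what the paper compresses into the word ``clearly'' when asserting $|\{e,f\}\cap\{x,y\}|\ge 1$. (Minor slip: you wrote ``$ace$ and $acf$'' where the second triangle is $y_1y_2f$; the vertex set is the same since $\{y_1,y_2\}=\{a,c\}$, but it sits on the \emph{other} boundary copy of that edge, which is what makes your arc span the disk.)

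The gap is in your treatment of the case $\{e,f\}=\{x,y\}$. You propose to locate the third interior vertex $g$ in one of the residual regions and ``rebuild one of $bdx$ or $dby$ with $g$ as its third vertex'' via an unspecified flip. But the conclusion of the lemma requires the new pair to remain \emph{adjacent}: if you replace, say, $x$ by $g$ you must also exhibit the edge $gy$, and nothing in your setup guarantees that $g$ is adjacent to the surviving member of $\{x,y\}$. You also have to keep the faces $ace$ and $y_1y_2f$ intact through whatever flips you perform. So the ``short case analysis'' is doing real work that you have not supplied, and it is not clear it can be made to go through along those lines.

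The paper handles this step differently and more cleanly: assuming without loss of generality $e=x$, $f=y$, it flips $ac$ and then $ef$, which (by edge-maximality) forces $efba$ and $fedy_1$ to be faces; then $be$ and $df$ become flippable, forcing $bec$ and $dfy_2$ to be faces as well. At that point the entire interior of $C$ is accounted for with only $e$ and $f$ inside, contradicting the hypothesis of at least three interior vertices. This is a direct contradiction rather than a re-embedding, and it avoids the adjacency issue entirely.
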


\begin{proof}
	First we apply Lemma \ref{>--<} twice to obtain two pairs of adjacent vertices $e,f$ and $x,y$ such that $ace$, $y_1y_2f$, $bdx$, $dby$ are faces inside $C$. Clearly $\{e,f\}$ and $\{x,y\}$ can not be disjoint, so $|\{e,f\}\cap \{x,y\}|\ge1$.
	
	Suppose for sake of contradiction that $\{e,f\}=\{x,y\}$, then without loss of generality we may assume that $e=x$ and $f=y$ (as in Figure \ref{L6}). By flipping edges $ac$ then $ef$, we see that both $efba$ and $fedy_1$ must be faces. Now edges $be$ and $df$ can be flipped, and so $bec$ and $dfy_2$ are faces. But now this contradicts the number of vertices in the interior of $C$.
	
	Hence $|\{e,f\}\cap \{x,y\}|=1$ as required.
\end{proof}

\begin{center}
	\scalebox{0.35}{
		\begin{tikzpicture}[x=0.75pt,y=0.75pt,yscale=-1,xscale=1]
		
		\draw [line width=3.75]    (130,350) -- (190,410) ;

		\draw [line width=3.75]    (130,70) -- (130,350) ;

		\draw [line width=3.75]    (530,70) -- (530,350) ;

		\draw [line width=3.75]    (470,10) -- (530,70) ;

		\draw [line width=3.75]    (470,410) -- (530,350) ;

		\draw [line width=3.75]    (130,70) -- (169.75,30.25) -- (190,10) ;

		\draw [line width=3.75]    (190,10) -- (280,10) -- (470,10) ;

		\draw [line width=3.75]    (190,410) -- (280,410) -- (470,410) ;

		\draw [line width=3.75]    (190,210) -- (130,70) ;

		\draw [line width=3.75]    (190,210) -- (130,350) ;

		\draw  [draw opacity=0][fill={rgb, 255:red, 0; green, 0; blue, 0 }  ,fill opacity=0.25 ] (190,210) -- (130,350) -- (130,70) -- cycle ;
		\draw [line width=3.75]    (470,210) -- (530,350) ;

		\draw [line width=3.75]    (470,210) -- (530,70) ;

		\draw  [draw opacity=0][fill={rgb, 255:red, 0; green, 0; blue, 0 }  ,fill opacity=0.25 ] (470,210) -- (530,70) -- (530,350) -- cycle ;
		
		\draw [line width=3.75]    (190,210) -- (280,210) -- (470,210) ;

		\draw [line width=3.75]    (190,210) -- (190,10) ;

		\draw [line width=3.75]    (190,210) -- (470,10) ;

		\draw  [draw opacity=0][fill={rgb, 255:red, 0; green, 0; blue, 0 }  ,fill opacity=0.25 ] (470,10) -- (190,210) -- (190,10) -- cycle ;
		\draw [line width=3.75]    (470,210) -- (470,410) ;

		\draw [line width=3.75]    (470,210) -- (190,410) ;

		\draw  [draw opacity=0][fill={rgb, 255:red, 0; green, 0; blue, 0 }  ,fill opacity=0.25 ] (190,410) -- (470,210) -- (470,410) -- cycle ;

		\draw (130,350) node[VertexStyle, font={\bfseries\Huge}] (0) {a} ;
		
		\draw (190,410) node[VertexStyle, font={\bfseries\Huge}] (0) {b} ;
		
		\draw (130,70) node[VertexStyle, font={\bfseries\Huge}] (0) {c} ;
		
		\draw (530,70) node[VertexStyle, font={\bfseries\Huge}] (0) {y$_1$} ;
		
		\draw (530,350) node[VertexStyle, font={\bfseries\Huge}] (0) {y$_2$} ;
		
		\draw (470,10) node[VertexStyle, font={\bfseries\Huge}] (0) {d} ;
		
		\draw (470,410) node[VertexStyle, font={\bfseries\Huge}] (0) {d} ;
		
		\draw (190,10) node[VertexStyle, font={\bfseries\Huge}] (0) {b} ;
		
		\draw (190,210) node[VertexStyle, font={\bfseries\Huge}] (0) {e} ;
		
		\draw (470,210) node[VertexStyle, font={\bfseries\Huge}] (0) {f} ;

		\end{tikzpicture}
	}
\captionof{figure}{}\label{L6}

\end{center}

We are now prepared to characterize the Euler impure graphs on the torus and the Klein bottle. First the torus.

\begin{proof}[Proof of Theorem \ref{torus}]
	Suppose that $G$ is an Euler impure toroidal graph. Then by Lemma \ref{4-face} there exists an embedding of $G$ with a 4-face $abcd$ whose vertices induce $K_4$. There is a unique such embedding of $K_4$ (see Figure \ref{T1}).
	
	As $K_7$ is embeddable, the 2-cell $C$ 
	bounded by $acbdcadb$ must contain at least 4 additional vertices in its interior. Both edges $ac$ and $bd$ can be flipped into the 4-face, so we may apply Lemma \ref{T+K lemma} and without loss of generality deduce that there is an embedding in which $C$ contains vertices $e,f,g$ such that $e$ is adjacent to both $f$ and $g$ and $ace$, $caf$, $bdx$ and $dby$ are all faces (see Figure \ref{T4}).
	
	By possibly flipping the edge $df$ (if it is an edge), we may assume that $f$ is adjacent to some vertex $x$ appearing first clockwise after $e$ but before $c$. But as we can flip the edge $ac$ and then $ef$, we see that we must have $x=d$ and further that $fed$ is a face.
	
	As edges $bd$ and then $eg$ are flippable, we see that $aeg$ lies on a face. Furthermore as $b$ and $f$ are non-adjacent we must also have that  $a$ is adjacent to $g$, with $aeg$ being a face (see Figure \ref{T5}).
	
	Suppose that $gef$ lie on a face, then we can perform a sequence of edge flips $ac$, $ef$, $dg$ and add an edge from $b$ to either $f$, or some vertex lying in $fadge$, a contradiction. Hence $e$ is adjacent to some new vertex $h$ appearing first clockwise after $f$ and before $g$. Now $h$ must be adjacent to $f$ with $efh$ being a face. We can also flip edges $ac$ and then $ef$, so we see that $h$ must also adjacent to $d$. By flipping edges $bd$ and then $eg$, we also see that $h$ must be adjacent to $g$, with $ehg$ being a face. By the same two edge flips, $h$ must also be adjacent to $a$ (see Figure \ref{T6}).
	
	Now both $hdg$ and $dha$ must be faces as we can perform the sequence of edge flips $ac$, $ef$, $dh$. Then further $agb$ must be a face by the sequence of edge flips $ac$, $ef$, $dh$, $ag$. The disc bounded by $cbe$ must be a face by the sequence of edge flips $ac$, $ef$, $dh$, $ag$, $be$. Similarly $ahf$ must be a face by the sequence of edge flips $bd$, $eg$, $ah$ and then lastly, $dcf$ must also be a face by the sequence of edge flips $bd$, $eg$, $ah$, $df$.
	
	Hence, $G \cong K_8-E(C_5)$ (with the missing cycle $bhcgf$) which we know to be Euler impure \cite{harary1974maximal}.
\end{proof}

\begin{multicols}{3}
	\center{
	\scalebox{0.35}{
		
		\begin{tikzpicture}[x=0.75pt,y=0.75pt,yscale=-1,xscale=1]
		
%
		\draw   (130,10) -- (530,10) -- (530,410) -- (130,410) -- cycle ;
		\draw  [color={rgb, 255:red, 0; green, 0; blue, 0 }  ,draw opacity=1 ][fill={rgb, 255:red, 0; green, 0; blue, 0 }  ,fill opacity=1 ] (120,230) -- (130,210) -- (140,230) -- (130,220) -- cycle ;
		\draw  [color={rgb, 255:red, 0; green, 0; blue, 0 }  ,draw opacity=1 ][fill={rgb, 255:red, 0; green, 0; blue, 0 }  ,fill opacity=1 ] (520,230) -- (530,210) -- (540,230) -- (530,220) -- cycle ;
		\draw  [color={rgb, 255:red, 0; green, 0; blue, 0 }  ,draw opacity=1 ][fill={rgb, 255:red, 0; green, 0; blue, 0 }  ,fill opacity=1 ] (310,0) -- (330,10) -- (310,20) -- (320,10) -- cycle ;
		\draw  [color={rgb, 255:red, 0; green, 0; blue, 0 }  ,draw opacity=1 ][fill={rgb, 255:red, 0; green, 0; blue, 0 }  ,fill opacity=1 ] (290,0) -- (310,10) -- (290,20) -- (300,10) -- cycle ;
		\draw  [color={rgb, 255:red, 0; green, 0; blue, 0 }  ,draw opacity=1 ][fill={rgb, 255:red, 0; green, 0; blue, 0 }  ,fill opacity=1 ] (310,400) -- (330,410) -- (310,420) -- (320,410) -- cycle ;
		\draw  [color={rgb, 255:red, 0; green, 0; blue, 0 }  ,draw opacity=1 ][fill={rgb, 255:red, 0; green, 0; blue, 0 }  ,fill opacity=1 ] (290,400) -- (310,410) -- (290,420) -- (300,410) -- cycle ;

		\draw [line width=3.75]    (130,350) -- (190,410) ;

		\draw [line width=3.75]    (130,70) -- (130,350) ;

		\draw [line width=3.75]    (530,70) -- (530,350) ;

		\draw [line width=3.75]    (470,10) -- (530,70) ;

		\draw [line width=3.75]    (470,410) -- (530,350) ;

		\draw [line width=3.75]    (130,70) -- (169.75,30.25) -- (190,10) ;

		\draw [line width=3.75]    (190,210) -- (146.84,109.28) -- (130,70) ;

		\draw [line width=3.75]    (190,210) -- (167.27,263.04) -- (130,350) ;

		\draw  [draw opacity=0][fill={rgb, 255:red, 0; green, 0; blue, 0 }  ,fill opacity=0.25 ] (190,210) -- (130,350) -- (130,70) -- cycle ;
		\draw [line width=3.75]    (470,210) -- (530,350) ;

		\draw [line width=3.75]    (470,210) -- (530,70) ;

		\draw  [draw opacity=0][fill={rgb, 255:red, 0; green, 0; blue, 0 }  ,fill opacity=0.25 ] (470,210) -- (530,70) -- (530,350) -- cycle ;
		
		\draw [line width=3.75]    (190,210) -- (280,210) -- (470,210) ;

		\draw [line width=3.75]    (190,210) -- (190,10) ;

		\draw [line width=3.75]    (190,210) -- (470,10) ;

		\draw  [draw opacity=0][fill={rgb, 255:red, 0; green, 0; blue, 0 }  ,fill opacity=0.25 ] (470,10) -- (190,210) -- (190,10) -- cycle ;
		\draw  [draw opacity=0][fill={rgb, 255:red, 0; green, 0; blue, 0 }  ,fill opacity=0.25 ] (190,10) -- (130,70) -- (130,10) -- cycle ;
		\draw  [draw opacity=0][fill={rgb, 255:red, 0; green, 0; blue, 0 }  ,fill opacity=0.25 ] (130,350) -- (190,410) -- (130,410) -- cycle ;
		\draw  [draw opacity=0][fill={rgb, 255:red, 0; green, 0; blue, 0 }  ,fill opacity=0.25 ] (530,70) -- (470,10) -- (530,10) -- cycle ;
		\draw  [draw opacity=0][fill={rgb, 255:red, 0; green, 0; blue, 0 }  ,fill opacity=0.25 ] (470,410) -- (530,350) -- (530,410) -- cycle ;
		\draw [line width=3.75]    (330,350) -- (190,410) ;

		\draw [line width=3.75]    (330,350) -- (470,410) ;

		\draw  [draw opacity=0][fill={rgb, 255:red, 0; green, 0; blue, 0 }  ,fill opacity=0.25 ] (330,350) -- (470,410) -- (190,410) -- cycle ;
		
		\draw [line width=3.75]    (190,210) -- (330,350) ;

		\draw [line width=3.75]    (190,410) -- (470,410) ;

		\draw [line width=3.75]    (190,10) -- (470,10) ;

		
		\draw (130,350) node[VertexStyle, font={\bfseries\Huge}] (0) {a} ;
		
		\draw (190,410) node[VertexStyle, font={\bfseries\Huge}] (0) {b} ;
		
		\draw (130,70) node[VertexStyle, font={\bfseries\Huge}] (0) {c} ;
		
		\draw (530,70) node[VertexStyle, font={\bfseries\Huge}] (0) {c} ;
		
		\draw (530,350) node[VertexStyle, font={\bfseries\Huge}] (0) {a} ;
		
		\draw (470,10) node[VertexStyle, font={\bfseries\Huge}] (0) {d} ;
		
		\draw (470,410) node[VertexStyle, font={\bfseries\Huge}] (0) {d} ;
		
		\draw (190,10) node[VertexStyle, font={\bfseries\Huge}] (0) {b} ;
		
		\draw (190,210) node[VertexStyle, font={\bfseries\Huge}] (0) {e} ;
		
		\draw (470,210) node[VertexStyle, font={\bfseries\Huge}] (0) {f} ;
		
		\draw (330,350) node[VertexStyle, font={\bfseries\Huge}] (0) {g} ;

		\end{tikzpicture}
	}

\captionof{figure}{}\label{T4}}

\columnbreak

\center{
\scalebox{0.35}{
	
	\begin{tikzpicture}[x=0.75pt,y=0.75pt,yscale=-1,xscale=1]
	
%
		\draw   (130,10) -- (530,10) -- (530,410) -- (130,410) -- cycle ;
		\draw  [color={rgb, 255:red, 0; green, 0; blue, 0 }  ,draw opacity=1 ][fill={rgb, 255:red, 0; green, 0; blue, 0 }  ,fill opacity=1 ] (120,230) -- (130,210) -- (140,230) -- (130,220) -- cycle ;
		\draw  [color={rgb, 255:red, 0; green, 0; blue, 0 }  ,draw opacity=1 ][fill={rgb, 255:red, 0; green, 0; blue, 0 }  ,fill opacity=1 ] (520,230) -- (530,210) -- (540,230) -- (530,220) -- cycle ;
		\draw  [color={rgb, 255:red, 0; green, 0; blue, 0 }  ,draw opacity=1 ][fill={rgb, 255:red, 0; green, 0; blue, 0 }  ,fill opacity=1 ] (310,0) -- (330,10) -- (310,20) -- (320,10) -- cycle ;
		\draw  [color={rgb, 255:red, 0; green, 0; blue, 0 }  ,draw opacity=1 ][fill={rgb, 255:red, 0; green, 0; blue, 0 }  ,fill opacity=1 ] (290,0) -- (310,10) -- (290,20) -- (300,10) -- cycle ;
		\draw  [color={rgb, 255:red, 0; green, 0; blue, 0 }  ,draw opacity=1 ][fill={rgb, 255:red, 0; green, 0; blue, 0 }  ,fill opacity=1 ] (310,400) -- (330,410) -- (310,420) -- (320,410) -- cycle ;
		\draw  [color={rgb, 255:red, 0; green, 0; blue, 0 }  ,draw opacity=1 ][fill={rgb, 255:red, 0; green, 0; blue, 0 }  ,fill opacity=1 ] (290,400) -- (310,410) -- (290,420) -- (300,410) -- cycle ;
	\draw [line width=3.75]    (130,350) -- (190,410) ;

	\draw [line width=3.75]    (130,70) -- (130,350) ;

	\draw [line width=3.75]    (530,70) -- (530,350) ;

	\draw [line width=3.75]    (470,10) -- (530,70) ;

	\draw [line width=3.75]    (470,410) -- (530,350) ;

	\draw [line width=3.75]    (130,70) -- (169.75,30.25) -- (190,10) ;

	\draw [line width=3.75]    (190,210) -- (146.84,109.28) -- (130,70) ;

	\draw [line width=3.75]    (190,210) -- (167.27,263.04) -- (130,350) ;

	\draw  [draw opacity=0][fill={rgb, 255:red, 0; green, 0; blue, 0 }  ,fill opacity=0.25 ] (190,210) -- (130,350) -- (130,70) -- cycle ;
	\draw [line width=3.75]    (470,210) -- (530,350) ;

	\draw [line width=3.75]    (470,210) -- (530,70) ;

	\draw  [draw opacity=0][fill={rgb, 255:red, 0; green, 0; blue, 0 }  ,fill opacity=0.25 ] (470,210) -- (530,70) -- (530,350) -- cycle ;
	
	\draw [line width=3.75]    (190,210) -- (280,210) -- (470,210) ;

	\draw [line width=3.75]    (190,210) -- (190,10) ;

	\draw [line width=3.75]    (190,210) -- (470,10) ;

	\draw  [draw opacity=0][fill={rgb, 255:red, 0; green, 0; blue, 0 }  ,fill opacity=0.25 ] (470,10) -- (190,210) -- (190,10) -- cycle ;
	\draw  [draw opacity=0][fill={rgb, 255:red, 0; green, 0; blue, 0 }  ,fill opacity=0.25 ] (190,10) -- (130,70) -- (130,10) -- cycle ;
	\draw  [draw opacity=0][fill={rgb, 255:red, 0; green, 0; blue, 0 }  ,fill opacity=0.25 ] (130,350) -- (190,410) -- (130,410) -- cycle ;
	\draw  [draw opacity=0][fill={rgb, 255:red, 0; green, 0; blue, 0 }  ,fill opacity=0.25 ] (530,70) -- (470,10) -- (530,10) -- cycle ;
	\draw  [draw opacity=0][fill={rgb, 255:red, 0; green, 0; blue, 0 }  ,fill opacity=0.25 ] (470,410) -- (530,350) -- (530,410) -- cycle ;
	\draw [line width=3.75]    (330,350) -- (190,410) ;

	\draw [line width=3.75]    (330,350) -- (470,410) ;

	\draw  [draw opacity=0][fill={rgb, 255:red, 0; green, 0; blue, 0 }  ,fill opacity=0.25 ] (330,350) -- (470,410) -- (190,410) -- cycle ;
	
	\draw [line width=3.75]    (190,210) -- (330,350) ;

	\draw [line width=3.75]    (130,350) -- (330,350) ;

	\draw [line width=3.75]    (470,10) -- (470,210) ;

	\draw [line width=3.75]    (190,10) -- (470,10) ;

	\draw [line width=3.75]    (190,410) -- (470,410) ;

	\draw  [draw opacity=0][fill={rgb, 255:red, 0; green, 0; blue, 0 }  ,fill opacity=0.25 ] (190,210) -- (470,10) -- (470,210) -- cycle ;
	\draw  [draw opacity=0][fill={rgb, 255:red, 0; green, 0; blue, 0 }  ,fill opacity=0.25 ] (190.5,210) -- (330,350) -- (130,350) -- cycle ;
	
	\draw (130,350) node[VertexStyle, font={\bfseries\Huge}] (0) {a} ;
	
	\draw (190,410) node[VertexStyle, font={\bfseries\Huge}] (0) {b} ;
	
	\draw (130,70) node[VertexStyle, font={\bfseries\Huge}] (0) {c} ;
	
	\draw (530,70) node[VertexStyle, font={\bfseries\Huge}] (0) {c} ;
	
	\draw (530,350) node[VertexStyle, font={\bfseries\Huge}] (0) {a} ;
	
	\draw (470,10) node[VertexStyle, font={\bfseries\Huge}] (0) {d} ;
	
	\draw (470,410) node[VertexStyle, font={\bfseries\Huge}] (0) {d} ;
	
	\draw (190,10) node[VertexStyle, font={\bfseries\Huge}] (0) {b} ;
	
	\draw (190,210) node[VertexStyle, font={\bfseries\Huge}] (0) {e} ;
	
	\draw (470,210) node[VertexStyle, font={\bfseries\Huge}] (0) {f} ;
	
	\draw (330,350) node[VertexStyle, font={\bfseries\Huge}] (0) {g} ;

	\end{tikzpicture}
}

\captionof{figure}{}\label{T5}}

\columnbreak
	
	\center{
	\scalebox{0.35}{
		
		\begin{tikzpicture}[x=0.75pt,y=0.75pt,yscale=-1,xscale=1]
%
%
		\draw   (130,10) -- (530,10) -- (530,410) -- (130,410) -- cycle ;
		\draw  [color={rgb, 255:red, 0; green, 0; blue, 0 }  ,draw opacity=1 ][fill={rgb, 255:red, 0; green, 0; blue, 0 }  ,fill opacity=1 ] (120,230) -- (130,210) -- (140,230) -- (130,220) -- cycle ;
		\draw  [color={rgb, 255:red, 0; green, 0; blue, 0 }  ,draw opacity=1 ][fill={rgb, 255:red, 0; green, 0; blue, 0 }  ,fill opacity=1 ] (520,230) -- (530,210) -- (540,230) -- (530,220) -- cycle ;
		\draw  [color={rgb, 255:red, 0; green, 0; blue, 0 }  ,draw opacity=1 ][fill={rgb, 255:red, 0; green, 0; blue, 0 }  ,fill opacity=1 ] (310,0) -- (330,10) -- (310,20) -- (320,10) -- cycle ;
		\draw  [color={rgb, 255:red, 0; green, 0; blue, 0 }  ,draw opacity=1 ][fill={rgb, 255:red, 0; green, 0; blue, 0 }  ,fill opacity=1 ] (290,0) -- (310,10) -- (290,20) -- (300,10) -- cycle ;
		\draw  [color={rgb, 255:red, 0; green, 0; blue, 0 }  ,draw opacity=1 ][fill={rgb, 255:red, 0; green, 0; blue, 0 }  ,fill opacity=1 ] (310,400) -- (330,410) -- (310,420) -- (320,410) -- cycle ;
		\draw  [color={rgb, 255:red, 0; green, 0; blue, 0 }  ,draw opacity=1 ][fill={rgb, 255:red, 0; green, 0; blue, 0 }  ,fill opacity=1 ] (290,400) -- (310,410) -- (290,420) -- (300,410) -- cycle ;
		\draw [line width=3.75]    (130,350) -- (190,410) ;

		\draw [line width=3.75]    (130,70) -- (130,350) ;

		\draw [line width=3.75]    (530,70) -- (530,350) ;

		\draw [line width=3.75]    (470,10) -- (530,70) ;

		\draw [line width=3.75]    (470,410) -- (530,350) ;

		\draw [line width=3.75]    (130,70) -- (169.75,30.25) -- (190,10) ;

		\draw [line width=3.75]    (190,210) -- (146.84,109.28) -- (130,70) ;

		\draw [line width=3.75]    (190,210) -- (167.27,263.04) -- (130,350) ;

		\draw  [draw opacity=0][fill={rgb, 255:red, 0; green, 0; blue, 0 }  ,fill opacity=0.25 ] (190,210) -- (130,350) -- (130,70) -- cycle ;
		\draw [line width=3.75]    (470,210) -- (530,350) ;

		\draw [line width=3.75]    (470,210) -- (530,70) ;

		\draw  [draw opacity=0][fill={rgb, 255:red, 0; green, 0; blue, 0 }  ,fill opacity=0.25 ] (470,210) -- (530,70) -- (530,350) -- cycle ;
		
		\draw [line width=3.75]    (190,210) -- (280,210) -- (470,210) ;

		\draw [line width=3.75]    (190,210) -- (190,10) ;

		\draw [line width=3.75]    (190,210) -- (470,10) ;

		\draw  [draw opacity=0][fill={rgb, 255:red, 0; green, 0; blue, 0 }  ,fill opacity=0.25 ] (470,10) -- (190,210) -- (190,10) -- cycle ;
		\draw  [draw opacity=0][fill={rgb, 255:red, 0; green, 0; blue, 0 }  ,fill opacity=0.25 ] (190,10) -- (130,70) -- (130,10) -- cycle ;
		\draw  [draw opacity=0][fill={rgb, 255:red, 0; green, 0; blue, 0 }  ,fill opacity=0.25 ] (130,350) -- (190,410) -- (130,410) -- cycle ;
		\draw  [draw opacity=0][fill={rgb, 255:red, 0; green, 0; blue, 0 }  ,fill opacity=0.25 ] (530,70) -- (470,10) -- (530,10) -- cycle ;
		\draw  [draw opacity=0][fill={rgb, 255:red, 0; green, 0; blue, 0 }  ,fill opacity=0.25 ] (470,410) -- (530,350) -- (530,410) -- cycle ;
		\draw [line width=3.75]    (330,350) -- (190,410) ;

		\draw [line width=3.75]    (330,350) -- (470,410) ;

		\draw  [draw opacity=0][fill={rgb, 255:red, 0; green, 0; blue, 0 }  ,fill opacity=0.25 ] (330,350) -- (470,410) -- (190,410) -- cycle ;
		
		\draw [line width=3.75]    (190,210) -- (330,350) ;

		\draw [line width=3.75]    (130,350) -- (330,350) ;

		\draw [line width=3.75]    (470,10) -- (470,210) ;

		\draw [line width=3.75]    (190,10) -- (470,10) ;

		\draw [line width=3.75]    (190,410) -- (470,410) ;

		\draw  [draw opacity=0][fill={rgb, 255:red, 0; green, 0; blue, 0 }  ,fill opacity=0.25 ] (190,210) -- (470,10) -- (470,210) -- cycle ;
		\draw  [draw opacity=0][fill={rgb, 255:red, 0; green, 0; blue, 0 }  ,fill opacity=0.25 ] (190.5,210) -- (330,350) -- (130,350) -- cycle ;
		\draw [line width=3.75]    (470,210) -- (400,280) ;

		\draw [line width=3.75]    (400,280) -- (190.5,210) ;

		\draw [line width=3.75]    (400,280) -- (330,350) ;

		\draw [line width=3.75]    (530,350) -- (400,280) ;

		\draw [line width=3.75]    (470,410) -- (400,280) ;

		\draw  [draw opacity=0][fill={rgb, 255:red, 0; green, 0; blue, 0 }  ,fill opacity=0.25 ] (400.5,280) -- (190,210) -- (470,210) -- cycle ;
		\draw  [draw opacity=0][fill={rgb, 255:red, 0; green, 0; blue, 0 }  ,fill opacity=0.25 ] (400,280) -- (330,350) -- (190,210) -- cycle ;
		
		\draw (130,350) node[VertexStyle, font={\bfseries\Huge}] (0) {a} ;
		
		\draw (190,410) node[VertexStyle, font={\bfseries\Huge}] (0) {b} ;
		
		\draw (130,70) node[VertexStyle, font={\bfseries\Huge}] (0) {c} ;
		
		\draw (530,70) node[VertexStyle, font={\bfseries\Huge}] (0) {c} ;
		
		\draw (530,350) node[VertexStyle, font={\bfseries\Huge}] (0) {a} ;
		
		\draw (470,10) node[VertexStyle, font={\bfseries\Huge}] (0) {d} ;
		
		\draw (470,410) node[VertexStyle, font={\bfseries\Huge}] (0) {d} ;
		
		\draw (190,10) node[VertexStyle, font={\bfseries\Huge}] (0) {b} ;
		
		\draw (190,210) node[VertexStyle, font={\bfseries\Huge}] (0) {e} ;
		
		\draw (470,210) node[VertexStyle, font={\bfseries\Huge}] (0) {f} ;
		
		\draw (330,350) node[VertexStyle, font={\bfseries\Huge}] (0) {g} ;
		
		\draw (400,280) node[VertexStyle, font={\bfseries\Huge}] (0) {h} ;

		\end{tikzpicture}
		
	}

\captionof{figure}{}\label{T6}}

\end{multicols}

Next the Klein bottle.

\begin{proof}[Proof of Theorem \ref{klein}]
	Suppose that $G$ is an Euler impure graph on the Klein bottle. Then by Lemma \ref{4-face} there exists an embedding of $G$ with a 4-face whose vertices induce $K_4$. There are two such embedding of $K_4$ (see Figures \ref{K1} and \ref{K2}). We will first consider the case when the vertices of this 4-face inducing $K_4$ embed as in Figure \ref{K2}.
	
	As $K_6$ is embeddable, the 2-cell $C$ 
	bounded by $acbdacdb$ must contain at least 3 additional vertices in its interior. Both edges $ac$ and $bd$ can be flipped into the 4-face, so we may apply Lemma \ref{T+K lemma} and without loss of generality deduce that there is an embedding in which $C$ contains vertices $e,f,g$ such that $e$ is adjacent to both $f$ and $g$ and $ace$, $caf$, $bdx$ and $dby$ are all faces (see Figure \ref{K3}).
	
	By possibly flipping the edge $df$ (if it is an edge), we may assume that $f$ is adjacent to some vertex $x$ appearing first clockwise after $e$ but before $c$. But as the edge $ac$ and then $ef$ are flippable, we see that we must have that $x=d$ and so further that $fed$ is a face.
	
	As edges $bd$ and then $eg$ are flippable, we see that $aeg$ lies on a face. Furthermore as $b$ and $f$ are non-adjacent we must also have that  $a$ is adjacent to $g$, with $aeg$ being a face (see Figure \ref{K4}).
	
	Let $x$ be the first vertex adjacent to $g$ appearing clockwise after $e$. Then as edges $bd$ and then $eg$ are flippable and $a$ does not lie on $gefcd$, we see that $x$ must be a vertex lying on $gefcd$. As there is already an edge between $e$ and every vertex on $gefcd$ in the embedding, we see that $gef$ lies on a face and so $x=f$ with $gef$ being a face.
	
	Now considering the sequence of edge flips $ac$, $ef$, $dg$, we see that $cdg$ must lie on a face. Furthermore as $b$ and $f$ are non-adjacent, we see that $g$ and $c$ must be adjacent and so $cdg$ is in fact a face (see Figure \ref{K5}).
	
	By the sequence of edge flips $bd$, $eg$, $af$, we see that $daf$ must be a face. Furthermore by flipping $bd$, $eg$, $af$, $cd$, $gb$, we get that $bag$ is a face too. Flipping edges $ac$, $ef$, $dg$ and then $bc$, we see that $cbe$ must also be a face. Going further once again and considering the sequence of flips $ac$, $ef$, $dg$, $bc$, $ae$, $gf$, we see that $gfc$ is a face.
	
	Hence in this case $G\cong K_7-e$ (with $b$ and $f$ being the pair of non-adjacent vertices) which we know to be Euler impure \cite{franklin1934six}. It remains now to consider the case when $G$ embeds with a 4-face inducing $K_4$ as in Figure \ref{K1}.
	
	As $bd$ is flippable, we may assume without loss of generality that $dbac$ is a face. So $a$ must be adjacent to at least one vertex appearing clockwise between $c$ and $b$. Let $e$ be the first neighbour of $a$ appearing clockwise after $c$. Then by possibly flipping $ec$, we see that $ace$ is a face. Furthermore $e$ must be adjacent to both $b$ and $d$ as $ac$ is flippable.
	
	Note that the vertices $a,b,c,d,e$ must have degree at least 5 as we could flip $ac$ and then both $eb$ and $ed$ to get a new embedding (as in Figure \ref{K7}), in which $aecdb$ bounds a mobius strip that must contain at least one additional vertex.
	
	Hence $bae$ and $ecd$ are not faces and so $G[\{a,b,c,d,e\}]$ must embed as in Figure \ref{K6}.
	
	Let $f$ be the first vertex adjacent to $a$ appearing anticlockwise after $b$ (we know that $f\not=e$ as $a$ has degree at least 5). Then $baf$ must be a face. Also as $ab$ is flippable, $f$ must also be adjacent to $b$ and $d$ with $dbf$ being a face.
	
	After flipping $ac$ both edges $eb$ and $ed$ can be flipped. Similarly after flipping $bd$ both edges $ea$ and $ec$ can be flipped. So by possibly flipping $ef$ we see that $ecfb$, $feb$, $efa$ are all faces (see Figure \ref{K8}). But then $G\cong K_6$ a contradiction.
\end{proof}

\begin{multicols}{3}
	\center{
	\scalebox{0.35}{
		
		\begin{tikzpicture}[x=0.75pt,y=0.75pt,yscale=-1,xscale=1]
		
		
		\draw   (130,10) -- (530,10) -- (530,410) -- (130,410) -- cycle ;
		\draw  [color={rgb, 255:red, 0; green, 0; blue, 0 }  ,draw opacity=1 ][fill={rgb, 255:red, 0; green, 0; blue, 0 }  ,fill opacity=1 ] (120,230) -- (130,210) -- (140,230) -- (130,220) -- cycle ;
		\draw  [color={rgb, 255:red, 0; green, 0; blue, 0 }  ,draw opacity=1 ][fill={rgb, 255:red, 0; green, 0; blue, 0 }  ,fill opacity=1 ] (540,190) -- (530,210) -- (520,190) -- (530,200) -- cycle ;
		\draw  [color={rgb, 255:red, 0; green, 0; blue, 0 }  ,draw opacity=1 ][fill={rgb, 255:red, 0; green, 0; blue, 0 }  ,fill opacity=1 ] (310,0) -- (330,10) -- (310,20) -- (320,10) -- cycle ;
		\draw  [color={rgb, 255:red, 0; green, 0; blue, 0 }  ,draw opacity=1 ][fill={rgb, 255:red, 0; green, 0; blue, 0 }  ,fill opacity=1 ] (290,0) -- (310,10) -- (290,20) -- (300,10) -- cycle ;
		\draw  [color={rgb, 255:red, 0; green, 0; blue, 0 }  ,draw opacity=1 ][fill={rgb, 255:red, 0; green, 0; blue, 0 }  ,fill opacity=1 ] (310,400) -- (330,410) -- (310,420) -- (320,410) -- cycle ;
		\draw  [color={rgb, 255:red, 0; green, 0; blue, 0 }  ,draw opacity=1 ][fill={rgb, 255:red, 0; green, 0; blue, 0 }  ,fill opacity=1 ] (290,400) -- (310,410) -- (290,420) -- (300,410) -- cycle ;
		\draw [line width=3.75]    (130,350) -- (190,410) ;

		\draw [line width=3.75]    (130,70) -- (130,350) ;

		\draw [line width=3.75]    (530,70) -- (530,350) ;

		\draw [line width=3.75]    (470,10) -- (530,70) ;

		\draw [line width=3.75]    (470,410) -- (530,350) ;

		\draw [line width=3.75]    (130,70) -- (169.75,30.25) -- (190,10) ;

		\draw [line width=3.75]    (190,210) -- (146.84,109.28) -- (130,70) ;

		\draw [line width=3.75]    (190,210) -- (167.27,263.04) -- (130,350) ;

		\draw  [draw opacity=0][fill={rgb, 255:red, 0; green, 0; blue, 0 }  ,fill opacity=0.25 ] (190,210) -- (130,350) -- (130,70) -- cycle ;
		\draw [line width=3.75]    (470,210) -- (530,350) ;

		\draw [line width=3.75]    (470,210) -- (530,70) ;

		\draw  [draw opacity=0][fill={rgb, 255:red, 0; green, 0; blue, 0 }  ,fill opacity=0.25 ] (470,210) -- (530,70) -- (530,350) -- cycle ;
		
		\draw [line width=3.75]    (190,210) -- (280,210) -- (470,210) ;

		\draw [line width=3.75]    (190,210) -- (190,10) ;

		\draw [line width=3.75]    (190,210) -- (470,10) ;

		\draw  [draw opacity=0][fill={rgb, 255:red, 0; green, 0; blue, 0 }  ,fill opacity=0.25 ] (470,10) -- (190,210) -- (190,10) -- cycle ;
		\draw  [draw opacity=0][fill={rgb, 255:red, 0; green, 0; blue, 0 }  ,fill opacity=0.25 ] (190,10) -- (130,70) -- (130,10) -- cycle ;
		\draw  [draw opacity=0][fill={rgb, 255:red, 0; green, 0; blue, 0 }  ,fill opacity=0.25 ] (130,350) -- (190,410) -- (130,410) -- cycle ;
		\draw  [draw opacity=0][fill={rgb, 255:red, 0; green, 0; blue, 0 }  ,fill opacity=0.25 ] (530,70) -- (470,10) -- (530,10) -- cycle ;
		\draw  [draw opacity=0][fill={rgb, 255:red, 0; green, 0; blue, 0 }  ,fill opacity=0.25 ] (470,410) -- (530,350) -- (530,410) -- cycle ;
		\draw [line width=3.75]    (330,350) -- (190,410) ;

		\draw [line width=3.75]    (330,350) -- (470,410) ;

		\draw  [draw opacity=0][fill={rgb, 255:red, 0; green, 0; blue, 0 }  ,fill opacity=0.25 ] (330,350) -- (470,410) -- (190,410) -- cycle ;
		
		\draw [line width=3.75]    (190,210) -- (330,350) ;

		\draw [line width=3.75]    (190,410) -- (470,410) ;

		\draw [line width=3.75]    (190,10) -- (470,10) ;

		
		\draw (130,350) node[VertexStyle, font={\bfseries\Huge}] (0) {a} ;
		
		\draw (190,410) node[VertexStyle, font={\bfseries\Huge}] (0) {b} ;
		
		\draw (130,70) node[VertexStyle, font={\bfseries\Huge}] (0) {c} ;
		
		\draw (530,70) node[VertexStyle, font={\bfseries\Huge}] (0) {a} ;
		
		\draw (530,350) node[VertexStyle, font={\bfseries\Huge}] (0) {c} ;
		
		\draw (470,10) node[VertexStyle, font={\bfseries\Huge}] (0) {d} ;
		
		\draw (470,410) node[VertexStyle, font={\bfseries\Huge}] (0) {d} ;
		
		\draw (190,10) node[VertexStyle, font={\bfseries\Huge}] (0) {b} ;
		
		\draw (190,210) node[VertexStyle, font={\bfseries\Huge}] (0) {e} ;
		
		\draw (470,210) node[VertexStyle, font={\bfseries\Huge}] (0) {f} ;
		
		\draw (330,350) node[VertexStyle, font={\bfseries\Huge}] (0) {g} ;

		\end{tikzpicture}
		
	}
	
	\captionof{figure}{}\label{K3}}

	\columnbreak
	
	\center{
	\scalebox{0.35}{
		
		\begin{tikzpicture}[x=0.75pt,y=0.75pt,yscale=-1,xscale=1]
		
		
		\draw   (130,10) -- (530,10) -- (530,410) -- (130,410) -- cycle ;
		\draw  [color={rgb, 255:red, 0; green, 0; blue, 0 }  ,draw opacity=1 ][fill={rgb, 255:red, 0; green, 0; blue, 0 }  ,fill opacity=1 ] (120,230) -- (130,210) -- (140,230) -- (130,220) -- cycle ;
		\draw  [color={rgb, 255:red, 0; green, 0; blue, 0 }  ,draw opacity=1 ][fill={rgb, 255:red, 0; green, 0; blue, 0 }  ,fill opacity=1 ] (540,190) -- (530,210) -- (520,190) -- (530,200) -- cycle ;
		\draw  [color={rgb, 255:red, 0; green, 0; blue, 0 }  ,draw opacity=1 ][fill={rgb, 255:red, 0; green, 0; blue, 0 }  ,fill opacity=1 ] (310,0) -- (330,10) -- (310,20) -- (320,10) -- cycle ;
		\draw  [color={rgb, 255:red, 0; green, 0; blue, 0 }  ,draw opacity=1 ][fill={rgb, 255:red, 0; green, 0; blue, 0 }  ,fill opacity=1 ] (290,0) -- (310,10) -- (290,20) -- (300,10) -- cycle ;
		\draw  [color={rgb, 255:red, 0; green, 0; blue, 0 }  ,draw opacity=1 ][fill={rgb, 255:red, 0; green, 0; blue, 0 }  ,fill opacity=1 ] (310,400) -- (330,410) -- (310,420) -- (320,410) -- cycle ;
		\draw  [color={rgb, 255:red, 0; green, 0; blue, 0 }  ,draw opacity=1 ][fill={rgb, 255:red, 0; green, 0; blue, 0 }  ,fill opacity=1 ] (290,400) -- (310,410) -- (290,420) -- (300,410) -- cycle ;
		\draw [line width=3.75]    (130,350) -- (190,410) ;

		\draw [line width=3.75]    (130,70) -- (130,350) ;

		\draw [line width=3.75]    (530,70) -- (530,350) ;

		\draw [line width=3.75]    (470,10) -- (530,70) ;

		\draw [line width=3.75]    (470,410) -- (530,350) ;

		\draw [line width=3.75]    (130,70) -- (169.75,30.25) -- (190,10) ;

		\draw [line width=3.75]    (190,210) -- (146.84,109.28) -- (130,70) ;

		\draw [line width=3.75]    (190,210) -- (167.27,263.04) -- (130,350) ;

		\draw  [draw opacity=0][fill={rgb, 255:red, 0; green, 0; blue, 0 }  ,fill opacity=0.25 ] (190,210) -- (130,350) -- (130,70) -- cycle ;
		\draw [line width=3.75]    (470,210) -- (530,350) ;

		\draw [line width=3.75]    (470,210) -- (530,70) ;

		\draw  [draw opacity=0][fill={rgb, 255:red, 0; green, 0; blue, 0 }  ,fill opacity=0.25 ] (470,210) -- (530,70) -- (530,350) -- cycle ;
		
		\draw [line width=3.75]    (190,210) -- (280,210) -- (470,210) ;

		\draw [line width=3.75]    (190,210) -- (190,10) ;

		\draw [line width=3.75]    (190,210) -- (470,10) ;

		\draw  [draw opacity=0][fill={rgb, 255:red, 0; green, 0; blue, 0 }  ,fill opacity=0.25 ] (470,10) -- (190,210) -- (190,10) -- cycle ;
		\draw  [draw opacity=0][fill={rgb, 255:red, 0; green, 0; blue, 0 }  ,fill opacity=0.25 ] (190,10) -- (130,70) -- (130,10) -- cycle ;
		\draw  [draw opacity=0][fill={rgb, 255:red, 0; green, 0; blue, 0 }  ,fill opacity=0.25 ] (130,350) -- (190,410) -- (130,410) -- cycle ;
		\draw  [draw opacity=0][fill={rgb, 255:red, 0; green, 0; blue, 0 }  ,fill opacity=0.25 ] (530,70) -- (470,10) -- (530,10) -- cycle ;
		\draw  [draw opacity=0][fill={rgb, 255:red, 0; green, 0; blue, 0 }  ,fill opacity=0.25 ] (470,410) -- (530,350) -- (530,410) -- cycle ;
		\draw [line width=3.75]    (330,350) -- (190,410) ;

		\draw [line width=3.75]    (330,350) -- (470,410) ;

		\draw  [draw opacity=0][fill={rgb, 255:red, 0; green, 0; blue, 0 }  ,fill opacity=0.25 ] (330,350) -- (470,410) -- (190,410) -- cycle ;
		
		\draw [line width=3.75]    (190,210) -- (330,350) ;

		\draw [line width=3.75]    (130,350) -- (330,350) ;

		\draw [line width=3.75]    (470,10) -- (470,210) ;

		\draw [line width=3.75]    (190,10) -- (470,10) ;

		\draw [line width=3.75]    (190,410) -- (470,410) ;

		\draw  [draw opacity=0][fill={rgb, 255:red, 0; green, 0; blue, 0 }  ,fill opacity=0.25 ] (190,210) -- (470,10) -- (470,210) -- cycle ;
		\draw  [draw opacity=0][fill={rgb, 255:red, 0; green, 0; blue, 0 }  ,fill opacity=0.25 ] (190.5,210) -- (330,350) -- (130,350) -- cycle ;
		
		\draw (130,350) node[VertexStyle, font={\bfseries\Huge}] (0) {a} ;
		
		\draw (190,410) node[VertexStyle, font={\bfseries\Huge}] (0) {b} ;
		
		\draw (130,70) node[VertexStyle, font={\bfseries\Huge}] (0) {c} ;
		
		\draw (530,70) node[VertexStyle, font={\bfseries\Huge}] (0) {a} ;
		
		\draw (530,350) node[VertexStyle, font={\bfseries\Huge}] (0) {c} ;
		
		\draw (470,10) node[VertexStyle, font={\bfseries\Huge}] (0) {d} ;
		
		\draw (470,410) node[VertexStyle, font={\bfseries\Huge}] (0) {d} ;
		
		\draw (190,10) node[VertexStyle, font={\bfseries\Huge}] (0) {b} ;
		
		\draw (190,210) node[VertexStyle, font={\bfseries\Huge}] (0) {e} ;
		
		\draw (470,210) node[VertexStyle, font={\bfseries\Huge}] (0) {f} ;
		
		\draw (330,350) node[VertexStyle, font={\bfseries\Huge}] (0) {g} ;

		\end{tikzpicture}

	}
	
	\captionof{figure}{}\label{K4}}

	\columnbreak
	
	\center{
	\scalebox{0.35}{
		
		\begin{tikzpicture}[x=0.75pt,y=0.75pt,yscale=-1,xscale=1]
		
		
		\draw   (130,10) -- (530,10) -- (530,410) -- (130,410) -- cycle ;
		\draw  [color={rgb, 255:red, 0; green, 0; blue, 0 }  ,draw opacity=1 ][fill={rgb, 255:red, 0; green, 0; blue, 0 }  ,fill opacity=1 ] (120,230) -- (130,210) -- (140,230) -- (130,220) -- cycle ;
		\draw  [color={rgb, 255:red, 0; green, 0; blue, 0 }  ,draw opacity=1 ][fill={rgb, 255:red, 0; green, 0; blue, 0 }  ,fill opacity=1 ] (540,190) -- (530,210) -- (520,190) -- (530,200) -- cycle ;
		\draw  [color={rgb, 255:red, 0; green, 0; blue, 0 }  ,draw opacity=1 ][fill={rgb, 255:red, 0; green, 0; blue, 0 }  ,fill opacity=1 ] (310,0) -- (330,10) -- (310,20) -- (320,10) -- cycle ;
		\draw  [color={rgb, 255:red, 0; green, 0; blue, 0 }  ,draw opacity=1 ][fill={rgb, 255:red, 0; green, 0; blue, 0 }  ,fill opacity=1 ] (290,0) -- (310,10) -- (290,20) -- (300,10) -- cycle ;
		\draw  [color={rgb, 255:red, 0; green, 0; blue, 0 }  ,draw opacity=1 ][fill={rgb, 255:red, 0; green, 0; blue, 0 }  ,fill opacity=1 ] (310,400) -- (330,410) -- (310,420) -- (320,410) -- cycle ;
		\draw  [color={rgb, 255:red, 0; green, 0; blue, 0 }  ,draw opacity=1 ][fill={rgb, 255:red, 0; green, 0; blue, 0 }  ,fill opacity=1 ] (290,400) -- (310,410) -- (290,420) -- (300,410) -- cycle ;
		\draw [line width=3.75]    (130,350) -- (190,410) ;

		\draw [line width=3.75]    (130,70) -- (130,350) ;

		\draw [line width=3.75]    (530,70) -- (530,350) ;

		\draw [line width=3.75]    (470,10) -- (530,70) ;

		\draw [line width=3.75]    (470,410) -- (530,350) ;

		\draw [line width=3.75]    (130,70) -- (169.75,30.25) -- (190,10) ;

		\draw [line width=3.75]    (190,210) -- (146.84,109.28) -- (130,70) ;

		\draw [line width=3.75]    (190,210) -- (167.27,263.04) -- (130,350) ;

		\draw  [draw opacity=0][fill={rgb, 255:red, 0; green, 0; blue, 0 }  ,fill opacity=0.25 ] (190,210) -- (130,350) -- (130,70) -- cycle ;
		\draw [line width=3.75]    (470,210) -- (530,350) ;

		\draw [line width=3.75]    (470,210) -- (530,70) ;

		\draw  [draw opacity=0][fill={rgb, 255:red, 0; green, 0; blue, 0 }  ,fill opacity=0.25 ] (470,210) -- (530,70) -- (530,350) -- cycle ;
		
		\draw [line width=3.75]    (190,210) -- (280,210) -- (470,210) ;

		\draw [line width=3.75]    (190,210) -- (190,10) ;

		\draw [line width=3.75]    (190,210) -- (470,10) ;

		\draw  [draw opacity=0][fill={rgb, 255:red, 0; green, 0; blue, 0 }  ,fill opacity=0.25 ] (470,10) -- (190,210) -- (190,10) -- cycle ;
		\draw  [draw opacity=0][fill={rgb, 255:red, 0; green, 0; blue, 0 }  ,fill opacity=0.25 ] (190,10) -- (130,70) -- (130,10) -- cycle ;
		\draw  [draw opacity=0][fill={rgb, 255:red, 0; green, 0; blue, 0 }  ,fill opacity=0.25 ] (130,350) -- (190,410) -- (130,410) -- cycle ;
		\draw  [draw opacity=0][fill={rgb, 255:red, 0; green, 0; blue, 0 }  ,fill opacity=0.25 ] (530,70) -- (470,10) -- (530,10) -- cycle ;
		\draw  [draw opacity=0][fill={rgb, 255:red, 0; green, 0; blue, 0 }  ,fill opacity=0.25 ] (470,410) -- (530,350) -- (530,410) -- cycle ;
		\draw [line width=3.75]    (330,350) -- (190,410) ;

		\draw [line width=3.75]    (330,350) -- (470,410) ;

		\draw  [draw opacity=0][fill={rgb, 255:red, 0; green, 0; blue, 0 }  ,fill opacity=0.25 ] (330,350) -- (470,410) -- (190,410) -- cycle ;
		
		\draw [line width=3.75]    (190,210) -- (330,350) ;

		\draw [line width=3.75]    (130,350) -- (330,350) ;

		\draw [line width=3.75]    (470,10) -- (470,210) ;

		\draw [line width=3.75]    (190,10) -- (470,10) ;

		\draw [line width=3.75]    (190,410) -- (470,410) ;

		\draw  [draw opacity=0][fill={rgb, 255:red, 0; green, 0; blue, 0 }  ,fill opacity=0.25 ] (190,210) -- (470,10) -- (470,210) -- cycle ;
		\draw  [draw opacity=0][fill={rgb, 255:red, 0; green, 0; blue, 0 }  ,fill opacity=0.25 ] (190.5,210) -- (330,350) -- (130,350) -- cycle ;
		\draw [line width=3.75]    (470,210) -- (330,350) ;

		\draw [line width=3.75]    (530,350) -- (330,350) ;

		\draw  [draw opacity=0][fill={rgb, 255:red, 0; green, 0; blue, 0 }  ,fill opacity=0.25 ] (469.5,410) -- (330,350) -- (530,350) -- cycle ;
		
		\draw (130,350) node[VertexStyle, font={\bfseries\Huge}] (0) {a} ;
		
		\draw (190,410) node[VertexStyle, font={\bfseries\Huge}] (0) {b} ;
		
		\draw (130,70) node[VertexStyle, font={\bfseries\Huge}] (0) {c} ;
		
		\draw (530,70) node[VertexStyle, font={\bfseries\Huge}] (0) {a} ;
		
		\draw (530,350) node[VertexStyle, font={\bfseries\Huge}] (0) {c} ;
		
		\draw (470,10) node[VertexStyle, font={\bfseries\Huge}] (0) {d} ;
		
		\draw (470,410) node[VertexStyle, font={\bfseries\Huge}] (0) {d} ;
		
		\draw (190,10) node[VertexStyle, font={\bfseries\Huge}] (0) {b} ;
		
		\draw (190,210) node[VertexStyle, font={\bfseries\Huge}] (0) {e} ;
		
		\draw (470,210) node[VertexStyle, font={\bfseries\Huge}] (0) {f} ;
		
		\draw (330,350) node[VertexStyle, font={\bfseries\Huge}] (0) {g} ;

		\end{tikzpicture}
		
	}
	
	\captionof{figure}{}\label{K5}}

\end{multicols}

\begin{multicols}{3}
	\center{
	\scalebox{0.35}{
		
		\begin{tikzpicture}[x=0.75pt,y=0.75pt,yscale=-1,xscale=1]
		
		
		\draw   (130,10) -- (530,10) -- (530,410) -- (130,410) -- cycle ;
		\draw  [color={rgb, 255:red, 0; green, 0; blue, 0 }  ,draw opacity=1 ][fill={rgb, 255:red, 0; green, 0; blue, 0 }  ,fill opacity=1 ] (120,230) -- (130,210) -- (140,230) -- (130,220) -- cycle ;
		\draw  [color={rgb, 255:red, 0; green, 0; blue, 0 }  ,draw opacity=1 ][fill={rgb, 255:red, 0; green, 0; blue, 0 }  ,fill opacity=1 ] (540,190) -- (530,210) -- (520,190) -- (530,200) -- cycle ;
		\draw  [color={rgb, 255:red, 0; green, 0; blue, 0 }  ,draw opacity=1 ][fill={rgb, 255:red, 0; green, 0; blue, 0 }  ,fill opacity=1 ] (310,0) -- (330,10) -- (310,20) -- (320,10) -- cycle ;
		\draw  [color={rgb, 255:red, 0; green, 0; blue, 0 }  ,draw opacity=1 ][fill={rgb, 255:red, 0; green, 0; blue, 0 }  ,fill opacity=1 ] (290,0) -- (310,10) -- (290,20) -- (300,10) -- cycle ;
		\draw  [color={rgb, 255:red, 0; green, 0; blue, 0 }  ,draw opacity=1 ][fill={rgb, 255:red, 0; green, 0; blue, 0 }  ,fill opacity=1 ] (310,400) -- (330,410) -- (310,420) -- (320,410) -- cycle ;
		\draw  [color={rgb, 255:red, 0; green, 0; blue, 0 }  ,draw opacity=1 ][fill={rgb, 255:red, 0; green, 0; blue, 0 }  ,fill opacity=1 ] (290,400) -- (310,410) -- (290,420) -- (300,410) -- cycle ;
		\draw [line width=3.75]    (190,350) -- (470,350) ;

		\draw [line width=3.75]    (130,350) -- (190,350) ;

		\draw [line width=3.75]    (190,350) -- (190,410) ;

		\draw  [draw opacity=0][fill={rgb, 255:red, 0; green, 0; blue, 0 }  ,fill opacity=0.25 ] (130,350) -- (190,350) -- (190,410) -- (130,410) -- cycle ;
		\draw [line width=3.75]    (470,350) -- (470,410) ;

		\draw [line width=3.75]    (470,350) -- (530,350) ;

		\draw  [draw opacity=0][fill={rgb, 255:red, 0; green, 0; blue, 0 }  ,fill opacity=0.25 ] (470,350) -- (530,350) -- (530,410) -- (470,410) -- cycle ;
		\draw [line width=3.75]    (470,70) -- (190,70) ;

		\draw [line width=3.75]    (530,70) -- (470,70) ;

		\draw [line width=3.75]    (470,70) -- (470,10) ;

		\draw  [draw opacity=0][fill={rgb, 255:red, 0; green, 0; blue, 0 }  ,fill opacity=0.25 ] (530,70) -- (470,70) -- (470,10) -- (530,10) -- cycle ;
		\draw [line width=3.75]    (190,70) -- (190,10) ;

		\draw [line width=3.75]    (190,70) -- (130,70) ;

		\draw  [draw opacity=0][fill={rgb, 255:red, 0; green, 0; blue, 0 }  ,fill opacity=0.25 ] (190,70) -- (130,70) -- (130,10) -- (190,10) -- cycle ;
		
		\draw [line width=3.75]    (190,350) -- (330,210) ;

		\draw [line width=3.75]    (190,70) -- (330,210) ;

		\draw [line width=3.75]    (330,210) -- (470,70) ;

		\draw [line width=3.75]    (470,350) -- (330,210) ;

		\draw  [draw opacity=0][fill={rgb, 255:red, 0; green, 0; blue, 0 }  ,fill opacity=0.25 ] (190,10) -- (470,10) -- (470,70) -- (190,70) -- cycle ;
		\draw  [draw opacity=0][fill={rgb, 255:red, 0; green, 0; blue, 0 }  ,fill opacity=0.25 ] (190,350) -- (470,350) -- (470,410) -- (190,410) -- cycle ;
		\draw  [draw opacity=0][fill={rgb, 255:red, 0; green, 0; blue, 0 }  ,fill opacity=0.25 ] (330,210) -- (470,350) -- (190,350) -- cycle ;
		\draw  [draw opacity=0][fill={rgb, 255:red, 0; green, 0; blue, 0 }  ,fill opacity=0.25 ] (330,210) -- (190,70) -- (470,70) -- cycle ;
		
		\draw (190,350) node[VertexStyle, font={\bfseries\Huge}] (0) {c} ;
		
		\draw (470,350) node[VertexStyle, font={\bfseries\Huge}] (0) {a} ;
		
		\draw (190,70) node[VertexStyle, font={\bfseries\Huge}] (0) {b} ;
		
		\draw (470,70) node[VertexStyle, font={\bfseries\Huge}] (0) {d} ;
		
		\draw (330,210) node[VertexStyle, font={\bfseries\Huge}] (0) {e} ;

		\end{tikzpicture}
		
	}

	\captionof{figure}{}\label{K6}}

	\columnbreak
	
	\center{
	\scalebox{0.35}{
		
		\begin{tikzpicture}[x=0.75pt,y=0.75pt,yscale=-1,xscale=1]
		
		
		\draw   (130,10) -- (530,10) -- (530,410) -- (130,410) -- cycle ;
		\draw  [color={rgb, 255:red, 0; green, 0; blue, 0 }  ,draw opacity=1 ][fill={rgb, 255:red, 0; green, 0; blue, 0 }  ,fill opacity=1 ] (120,230) -- (130,210) -- (140,230) -- (130,220) -- cycle ;
		\draw  [color={rgb, 255:red, 0; green, 0; blue, 0 }  ,draw opacity=1 ][fill={rgb, 255:red, 0; green, 0; blue, 0 }  ,fill opacity=1 ] (540,190) -- (530,210) -- (520,190) -- (530,200) -- cycle ;
		\draw  [color={rgb, 255:red, 0; green, 0; blue, 0 }  ,draw opacity=1 ][fill={rgb, 255:red, 0; green, 0; blue, 0 }  ,fill opacity=1 ] (310,0) -- (330,10) -- (310,20) -- (320,10) -- cycle ;
		\draw  [color={rgb, 255:red, 0; green, 0; blue, 0 }  ,draw opacity=1 ][fill={rgb, 255:red, 0; green, 0; blue, 0 }  ,fill opacity=1 ] (290,0) -- (310,10) -- (290,20) -- (300,10) -- cycle ;
		\draw  [color={rgb, 255:red, 0; green, 0; blue, 0 }  ,draw opacity=1 ][fill={rgb, 255:red, 0; green, 0; blue, 0 }  ,fill opacity=1 ] (310,400) -- (330,410) -- (310,420) -- (320,410) -- cycle ;
		\draw  [color={rgb, 255:red, 0; green, 0; blue, 0 }  ,draw opacity=1 ][fill={rgb, 255:red, 0; green, 0; blue, 0 }  ,fill opacity=1 ] (290,400) -- (310,410) -- (290,420) -- (300,410) -- cycle ;
		\draw [line width=3.75]    (130,350) -- (190,350) ;

		\draw [line width=3.75]    (190,350) -- (190,410) ;

		\draw  [draw opacity=0][fill={rgb, 255:red, 0; green, 0; blue, 0 }  ,fill opacity=0.25 ] (130,350) -- (190,350) -- (190,410) -- (130,410) -- cycle ;
		\draw [line width=3.75]    (470,350) -- (470,410) ;

		\draw [line width=3.75]    (470,350) -- (530,350) ;

		\draw  [draw opacity=0][fill={rgb, 255:red, 0; green, 0; blue, 0 }  ,fill opacity=0.25 ] (470,350) -- (530,350) -- (530,410) -- (470,410) -- cycle ;
		\draw [line width=3.75]    (470,70) -- (190,70) ;

		\draw [line width=3.75]    (530,70) -- (470,70) ;

		\draw [line width=3.75]    (470,70) -- (470,10) ;

		\draw  [draw opacity=0][fill={rgb, 255:red, 0; green, 0; blue, 0 }  ,fill opacity=0.25 ] (530,70) -- (470,70) -- (470,10) -- (530,10) -- cycle ;
		\draw [line width=3.75]    (190,70) -- (190,10) ;

		\draw [line width=3.75]    (190,70) -- (130,70) ;

		\draw  [draw opacity=0][fill={rgb, 255:red, 0; green, 0; blue, 0 }  ,fill opacity=0.25 ] (190,70) -- (130,70) -- (130,10) -- (190,10) -- cycle ;
		
		\draw [line width=3.75]    (190,350) -- (330,210) ;

		\draw [line width=3.75]    (470,350) -- (330,210) ;

		\draw  [draw opacity=0][fill={rgb, 255:red, 0; green, 0; blue, 0 }  ,fill opacity=0.25 ] (190,10) -- (470,10) -- (470,70) -- (190,70) -- cycle ;
		\draw  [draw opacity=0][fill={rgb, 255:red, 0; green, 0; blue, 0 }  ,fill opacity=0.25 ] (190,350) -- (470,350) -- (470,410) -- (190,410) -- cycle ;
		\draw  [draw opacity=0][fill={rgb, 255:red, 0; green, 0; blue, 0 }  ,fill opacity=0.25 ] (330,210) -- (470,350) -- (190,350) -- cycle ;
		\draw [line width=3.75]    (230,410) -- (330,210) ;

		\draw [line width=3.75]    (230,10) -- (190,70) ;

		\draw [line width=3.75]    (430,410) -- (330,210) ;

		\draw [line width=3.75]    (430,10) -- (470,70) ;
		
		\draw [line width=3.75]    (130,410) -- (190,350) ;

		\draw [line width=3.75]    (470,350) -- (530,410) ;

		\draw (190,350) node[VertexStyle, font={\bfseries\Huge}] (0) {c} ;
		
		\draw (470,350) node[VertexStyle, font={\bfseries\Huge}] (0) {a} ;
		
		\draw (190,70) node[VertexStyle, font={\bfseries\Huge}] (0) {b} ;
		
		\draw (470,70) node[VertexStyle, font={\bfseries\Huge}] (0) {d} ;
		
		\draw (330,210) node[VertexStyle, font={\bfseries\Huge}] (0) {e} ;

		\end{tikzpicture}

	}
	
	\captionof{figure}{}\label{K7}}

	\columnbreak
	
	\center{
	\scalebox{0.35}{
		
		\begin{tikzpicture}[x=0.75pt,y=0.75pt,yscale=-1,xscale=1]
		
%
		\draw   (130,10) -- (530,10) -- (530,410) -- (130,410) -- cycle ;
		\draw  [color={rgb, 255:red, 0; green, 0; blue, 0 }  ,draw opacity=1 ][fill={rgb, 255:red, 0; green, 0; blue, 0 }  ,fill opacity=1 ] (120,130) -- (130,110) -- (140,130) -- (130,120) -- cycle ;
		\draw  [color={rgb, 255:red, 0; green, 0; blue, 0 }  ,draw opacity=1 ][fill={rgb, 255:red, 0; green, 0; blue, 0 }  ,fill opacity=1 ] (540,290) -- (530,310) -- (520,290) -- (530,300) -- cycle ;
		\draw  [color={rgb, 255:red, 0; green, 0; blue, 0 }  ,draw opacity=1 ][fill={rgb, 255:red, 0; green, 0; blue, 0 }  ,fill opacity=1 ] (310,0) -- (330,10) -- (310,20) -- (320,10) -- cycle ;
		\draw  [color={rgb, 255:red, 0; green, 0; blue, 0 }  ,draw opacity=1 ][fill={rgb, 255:red, 0; green, 0; blue, 0 }  ,fill opacity=1 ] (290,0) -- (310,10) -- (290,20) -- (300,10) -- cycle ;
		\draw  [color={rgb, 255:red, 0; green, 0; blue, 0 }  ,draw opacity=1 ][fill={rgb, 255:red, 0; green, 0; blue, 0 }  ,fill opacity=1 ] (310,400) -- (330,410) -- (310,420) -- (320,410) -- cycle ;
		\draw  [color={rgb, 255:red, 0; green, 0; blue, 0 }  ,draw opacity=1 ][fill={rgb, 255:red, 0; green, 0; blue, 0 }  ,fill opacity=1 ] (290,400) -- (310,410) -- (290,420) -- (300,410) -- cycle ;
		\draw [line width=3.75]    (190,350) -- (470,350) ;

		\draw [line width=3.75]    (130,350) -- (190,350) ;

		\draw [line width=3.75]    (190,350) -- (190,410) ;

		\draw  [draw opacity=0][fill={rgb, 255:red, 0; green, 0; blue, 0 }  ,fill opacity=0.25 ] (130,350) -- (190,350) -- (190,410) -- (130,410) -- cycle ;
		\draw [line width=3.75]    (470,350) -- (470,410) ;

		\draw [line width=3.75]    (470,350) -- (530,350) ;

		\draw  [draw opacity=0][fill={rgb, 255:red, 0; green, 0; blue, 0 }  ,fill opacity=0.25 ] (470,350) -- (530,350) -- (530,410) -- (470,410) -- cycle ;
		\draw [line width=3.75]    (470,70) -- (190,70) ;

		\draw [line width=3.75]    (530,70) -- (470,70) ;

		\draw [line width=3.75]    (470,70) -- (470,10) ;

		\draw  [draw opacity=0][fill={rgb, 255:red, 0; green, 0; blue, 0 }  ,fill opacity=0.25 ] (530,70) -- (470,70) -- (470,10) -- (530,10) -- cycle ;
		\draw [line width=3.75]    (190,70) -- (190,10) ;

		\draw [line width=3.75]    (190,70) -- (130,70) ;

		\draw  [draw opacity=0][fill={rgb, 255:red, 0; green, 0; blue, 0 }  ,fill opacity=0.25 ] (190,70) -- (130,70) -- (130,10) -- (190,10) -- cycle ;
		
		\draw [line width=3.75]    (190,350) -- (330,210) ;

		\draw [line width=3.75]    (190,70) -- (330,210) ;

		\draw [line width=3.75]    (330,210) -- (470,70) ;

		\draw [line width=3.75]    (470,350) -- (330,210) ;

		\draw  [draw opacity=0][fill={rgb, 255:red, 0; green, 0; blue, 0 }  ,fill opacity=0.25 ] (190,10) -- (470,10) -- (470,70) -- (190,70) -- cycle ;
		\draw  [draw opacity=0][fill={rgb, 255:red, 0; green, 0; blue, 0 }  ,fill opacity=0.25 ] (190,350) -- (470,350) -- (470,410) -- (190,410) -- cycle ;
		\draw [line width=3.75]    (330,210) -- (530,210) ;

		\draw [line width=3.75]    (470,70) -- (530,210) ;

		\draw [line width=3.75]    (470,350) -- (530,210) ;

		\draw [line width=3.75]    (190,70) -- (130,210) ;

		\draw [line width=3.75]    (130,210) -- (190,350) ;

		\draw  [draw opacity=0][fill={rgb, 255:red, 0; green, 0; blue, 0 }  ,fill opacity=0.25 ] (130,70) -- (530,70) -- (530,350) -- (130,350) -- cycle ;

		\draw (190,350) node[VertexStyle, font={\bfseries\Huge}] (0) {c} ;
		
		\draw (470,350) node[VertexStyle, font={\bfseries\Huge}] (0) {a} ;
		
		\draw (190,70) node[VertexStyle, font={\bfseries\Huge}] (0) {b} ;
		
		\draw (470,70) node[VertexStyle, font={\bfseries\Huge}] (0) {d} ;
		
		\draw (330,210) node[VertexStyle, font={\bfseries\Huge}] (0) {e} ;
		
		\draw (130,210) node[VertexStyle, font={\bfseries\Huge}] (0) {f} ;
		
		\draw (530,210) node[VertexStyle, font={\bfseries\Huge}] (0) {f} ;
		
		\end{tikzpicture}

	}
	
	\captionof{figure}{}\label{K8}}

\end{multicols}

\section{Edge-maximal graphs on high genus orientable surfaces}

In this section we prove Theorems \ref{1972 impures} and \ref{Wood graphs}. To start, we prove a theorem concerning graphs whose embedding in a given surface all have some fixed faces and give an additive genus theorem on how these properties are preserved under a certain join operation on two graphs. We hope that others will also be able to make use of this theorem either in constructing other Euler impure graphs on surfaces or for entirely different purposes.

First we will have to define the join operation. We consider graphs with distinguished cycles. Given a graph $G$ and a cycle $C$ in $G$, we let $G^C$ be the graph $G$ with distinguished cycle $C$.

A graph $T$ is \emph{cylindrical}, if it is planar, has minimum degree 3 and there exists a partition $V(T)=V_1\cup V_2$ such that both $T[V_1]$ and $T[V_2]$ are induced cycles. In particular this implies that for each vertex $v$ of $V_i$, there is an edge incident with $v$ and some vertex of $V_{3-i}$. Such a graph is 3-connected and so it has a unique embedding in the plane with $V_1$ and $V_2$ being the vertices of two disjoint faces. In this way we can view $T$ as being embedded on the cylinder $\mathbb{S}_0-2\mathbb{D}$ where $T[V_1]$ lies on one boundary and $T[V_2]$ lies on the other. Given a cylindrical graph $T$, we let $T^1$ and $T^2$ denote the two facial cycles on the vertices $V_1$ and $V_2$ respectively.

Given two graphs with distinguished cycles $G^C$ and $H^D$, we call a cylindrical graph $T$, \emph{$T$-joining} if $C=T^1$ and $D=T^2$. If $T$ is $T$-joining then the \emph{cylindrical $T$-join} of $G^C$ and $H^D$, denoted by $T(G^C , H^D)$ is equal to $G\cup H \cup E(T)$. Notice that the edges $E(V(G),V(H))$ are exactly the edges of $T$ not lying on $T^1$ or $T^2$. Given a cycle $C$, let $\overline{C}$ denote the same cycle but with opposite orientation.

Let $\gamma(G)$ denote the orientable genus of a graph $G$ and $\overline{\gamma}(G)$ the non-orientable genus. Let $\gamma^*(G)=\min\{2\gamma(G), \overline{\gamma}(G)\}$ denote the Euler genus of $G$. A \emph{band} $\sigma$ of a surface $\Sigma$ is a restriction of $\Sigma$ that is homeomorphic to $\mathbb{S}_0-2\mathbb{D}$.

\begin{lemma}\label{consitant face lemma}
	Let $G^C$ and $H^D$ be two graphs with distinguished cycles such that there is some minimum orientable genus embedding of $G$ with $C$ as a facial cycle and furthermore in any minimum orientable genus embedding, the vertices of $C$ appear on a facial cycle only if the facial cycle has length $|V(C)|$, and similarly for $H^D$. Let $T$ be a cylindrical $T$-joining graph of $G^C$ and $H^D$.
	
	Then $\gamma(T(G^C , H^D))=\gamma(G)+\gamma(H)$ and furthermore in an embedding of $T(G^C , H^D)$ on $\mathbb{S}_{\gamma(G)+\gamma(H)}$, there exists a band $\sigma$ of $\mathbb{S}_{\gamma(G)+\gamma(H)}$ whose interior intersects the edges $E(V(G),V(H))$ only, one boundary intersects exactly a cycle on $V(C)$, the other boundary intersects exactly a cycle on $V(D)$ and such that $\mathbb{S}_{\gamma(G)+\gamma(H)}-\sigma \cong (\mathbb{S}_{\gamma(G)}-\mathbb{D}) \cup (\mathbb{S}_{\gamma(H)}-\mathbb{D})$.
\end{lemma}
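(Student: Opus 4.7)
Write $J := T(G^C, H^D)$. The plan is to prove the equality $\gamma(J)=\gamma(G)+\gamma(H)$ by two separate inequalities, with the promised band $\sigma$ emerging naturally from the construction used for the upper bound.

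\textbf{Upper bound and construction of the band.} For $\gamma(J) \le \gamma(G)+\gamma(H)$ I would paste together three pieces. First, take a minimum orientable genus embedding of $G$ on $\mathbb{S}_{\gamma(G)}$ having $C$ as a facial cycle, and a minimum genus embedding of $H$ on $\mathbb{S}_{\gamma(H)}$ having $D$ as a facial cycle; both exist by the hypothesis. Delete the open $2$-cells bounded by $C$ and by $D$ respectively, producing two surfaces with boundary. Since $T$ is $3$-connected and planar with $T^1$ and $T^2$ bounding disjoint faces in its unique planar embedding, removing these two faces embeds $T$ on a cylinder $\mathbb{S}_0-2\mathbb{D}$ with $T^1$ on one boundary and $T^2$ on the other. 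Glue the three pieces along boundaries, identifying $T^1=C$ and $T^2=D$ with the corresponding boundary cycles of the punctured $G$- and $H$-surfaces. An Euler characteristic count confirms that the resulting closed surface is $\mathbb{S}_{\gamma(G)+\gamma(H)}$, and the combined picture is an embedding of $J$. The inserted cylinder is exactly the band $\sigma$ demanded by the statement: its interior meets only the rung edges $E(V(G),V(H))$ of $T$, its two boundaries trace $C$ and $D$, and its complement is $(\mathbb{S}_{\gamma(G)}-\mathbb{D})\cup(\mathbb{S}_{\gamma(H)}-\mathbb{D})$ by construction.

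\textbf{Lower bound.} For the reverse inequality I would take an arbitrary embedding $\Pi$ of $J$ on $\mathbb{S}_g$ with $g=\gamma(J)$ and extract a simple closed curve $\tau\subset\mathbb{S}_g$ meeting $J$ transversely in exactly the rung edges $E(V(C),V(D))$ and separating the $V(C)$-side from the $V(D)$-side. Once such $\tau$ is in hand, cutting $\mathbb{S}_g$ along it produces two surfaces with a single boundary circle; capping those circles with disks gives closed orientable surfaces $\Sigma_1',\Sigma_2'$ with $\mathrm{genus}(\Sigma_1')+\mathrm{genus}(\Sigma_2')=g$, carrying embeddings of $G$ (with $C$ bounding the new disk) and of $H$ (with $D$ bounding the new disk). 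Then $\gamma(G)\le\mathrm{genus}(\Sigma_1')$ and $\gamma(H)\le\mathrm{genus}(\Sigma_2')$ combine to yield $\gamma(G)+\gamma(H)\le g$. Equality in the two inequalities then pins down $g=\gamma(G)+\gamma(H)$, and moreover forces $\mathrm{genus}(\Sigma_i')=\gamma(G),\gamma(H)$, so the embeddings of $G,H$ obtained after cutting are minimum genus.

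\textbf{Main obstacle.} The genuinely difficult step is producing the separating curve $\tau$ in an arbitrary embedding. A $3$-connected planar graph such as $T$ admits combinatorial embeddings in higher-genus surfaces that differ substantially from the planar one, and a priori handles of $\mathbb{S}_g$ could cross every candidate for $\tau$. Here the hypotheses on $G^C$ and $H^D$ should enter decisively. The plan is to argue that if no valid $\tau$ existed then a handle of $\mathbb{S}_g$ would have to traverse the cylindrical core of $T$; tracing faces of $\Pi$ that meet $V(C)$, one could then exhibit a face of the $G$-restricted embedding whose boundary visits $V(C)$ along a walk strictly longer than $|V(C)|$. Since the restriction of $\Pi$ to $G$ is an embedding of $G$ on a surface of genus at most $g=\gamma(G)+\gamma(H)$, a handle-counting argument forces the $G$-piece to sit on a surface of genus at most $\gamma(G)$, hence at minimum genus. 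The hypothesis on $G^C$ then rules out this longer facial walk, giving the desired contradiction and yielding $\tau$. A symmetric argument on the $H$ side completes the picture, and a final appeal to the $3$-connectivity and cylindrical structure of $T$ ensures that $\tau$ can be chosen to meet $J$ only along rung edges.
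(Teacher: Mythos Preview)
Your upper-bound construction is fine and matches the paper. The difficulties lie in the other half.

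\textbf{The lower bound is a one-liner you are missing.} You try to extract $\gamma(G)+\gamma(H)\le\gamma(J)$ by finding a separating curve $\tau$ in an arbitrary minimum-genus embedding, and you correctly flag this as hard. But no such argument is needed: $G\cup H$ is a subgraph of $J$ with two components, and orientable genus is additive over components, so $\gamma(J)\ge\gamma(G\cup H)=\gamma(G)+\gamma(H)$. That is the entire lower bound.

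\textbf{Finding the band in an arbitrary embedding.} Once $\gamma(J)=\gamma(G)+\gamma(H)$ is known, the band comes almost for free, and not from your constructed embedding (which only handles one embedding). In an arbitrary embedding of $J$ on $\mathbb{S}_{\gamma(G)+\gamma(H)}$, look at the sub-embedding of $G\cup H$ and consider the complement $\mathcal{S}=\mathbb{S}_{\gamma(G)+\gamma(H)}\setminus(E(G)\cup E(H))$. Because $G\cup H$ already has genus $\gamma(G)+\gamma(H)$, every component of $\mathcal{S}$ has genus $0$; because $G\cup H$ has exactly two components, exactly one component of $\mathcal{S}$ is an annulus and the rest are disks. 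That annulus is $\sigma$: its interior contains precisely the rung edges $E(V(G),V(H))$, and removing it disconnects the surface into $(\mathbb{S}_{\gamma(G)}-\mathbb{D})\cup(\mathbb{S}_{\gamma(H)}-\mathbb{D})$. The hypothesis that $V(C)$ only lies on facial walks of length $|V(C)|$ is invoked only at the very end, to pin down that the boundary walks of $\sigma$ are honest cycles on $V(C)$ and $V(D)$: cap the $G$-side with a disk to get a minimum-genus embedding of $G$, and apply the hypothesis to the new face.

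Your proposed route through a contradiction (``if no $\tau$ exists then a handle crosses the cylindrical core, producing a too-long facial walk on $V(C)$'') is both unnecessary and, as written, circular: you assume the $G$-piece sits on a genus-$\gamma(G)$ surface in order to invoke the facial-length hypothesis, but that is precisely what you are trying to establish.
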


\begin{proof}
	First we show existence of such an embedding. Consider an embedding of $G$ on $\mathbb{S}_{\gamma(G)}-\mathbb{D}$ such that $C$ lies on the boundary and similarly an embedding of $H$ on $\mathbb{S}_{\gamma(H)}-\mathbb{D}$ such that $\overline{D}$ lies on the boundary. Next consider $T$ embedded naturally on the cylinder $\sigma=\mathbb{S}_0-2\mathbb{D}$ as previously described and identify the boundary which $T^1=C$ lies on along with the boundary of $\mathbb{S}_{\gamma(G)}-\mathbb{D}$ (which $C$ also lies on) with the same orientation and identifying the cycle $T^1=C$ of $T$ with the cycle $C$ of $G$ to obtain a new graph $G^*$ embedded on this new surface which is still homeomorphic to $\mathbb{S}_{\gamma(G)}-\mathbb{D}$. Lastly identify the boundary of $\mathbb{S}_{\gamma(G)}-\mathbb{D}$ (which $T^2=D$ lies on) with the boundary of $\mathbb{S}_{\gamma(H)}-\mathbb{D}$ (which $\overline{D}$ lies on) in the opposite directions and then identify cycles $T^2=D$ of $G^*$ with cycle $D$ of $H$ to obtain $T(G^C , H^D)$ with an embedding on $\mathbb{S}_{\gamma(G)+\gamma(H)}$. Now by the construction it is clear that $\sigma$ is such a band of $\mathbb{S}_{\gamma(G)+\gamma(H)}$ in this embedding of $T(G^C , H^D)$.
	
	As the orientable genus of a graph is additive on components \cite{mohar2001graphs} and $G\cup H$ is a subgraph of $T(G^C , H^D)$, we have that $\gamma(T(G^C , H^D)) \ge \gamma(G\cup H) = \gamma(G)+\gamma(H)$. Hence $\gamma(T(G^C , H^D)) = \gamma(G)+\gamma(H)$ as $T(G^C , H^D)$ embeds on $\mathbb{S}_{\gamma(G)+\gamma(H)}$.
	
	Now fix an embedding of $T(G^C , H^D)$ on $\mathbb{S}_{\gamma(G)+\gamma(H)}$ and then consider ${\cal S}=\mathbb{S}_{\gamma(G)+\gamma(H)} - (E(G)\cup E(H))$. As $G\cup H$ has orientable genus $\gamma(G)+\gamma(H)$, we see that $\cal S$ must consist of a collection of genus 0 surfaces with boundaries. Furthermore as $G\cup H$ has two components, we see that all but one component of $\cal S$ must be homeomorphic to $\mathbb{D}$ whose boundary is associated with a closed walk of either $G$ or $H$ and one component $\sigma$ must be cylindrical, being homeomorphic to $\mathbb{S}_0-2\mathbb{D}$ with one boundary associated with a closed walk of $G$ and the other with a closed walk of $H$. In particular $\sigma$ is a band of $\mathbb{S}_{\gamma(G)+\gamma(H)}$ such that $\mathbb{S}_{\gamma(G)+\gamma(H)}-\sigma$ is disconnected.
	
	By definition, the interior of a component of $\cal S$ (in particular $\sigma$) cannot contain an edge of $G$ or $H$.
	Furthermore, each of the components of $\cal S$ that are homeomorphic to $\mathbb{D}$ is a face of the embedding of $T(G^C , H^D)$ on $\mathbb{S}_{\gamma(G)+\gamma(H)}$ as the embedding of $T(G^C , H^D)$ is a 2-cell embedding and no such component can contain an edge of $E(V(G),V(H))$ as none have a vertex of both $G$ and $H$. The interior of the other component $\sigma$ must therefore contain the edges $E(V(G),V(H))$. As $G\cup H$ embeds on $\mathbb{S}_{\gamma(G)+\gamma(H)}-\sigma$, we see that $\mathbb{S}_{\gamma(G)+\gamma(H)}-\sigma \cong (\mathbb{S}_{\gamma(G)}-\mathbb{D}) \cup (\mathbb{S}_{\gamma(H)}-\mathbb{D})$.
	
	Finally, we show that one boundary of $\sigma$ intersects exactly a cycle on $V(C)$, the other boundary intersects exactly a cycle on $V(D)$. Let $W_1$ and $W_2$ be the closed walks along each boundary of $\sigma$, with $V(W_1)\subseteq V(G)$ and $V(W_2)\subseteq V(H)$. As every vertex of $C$ is adjacent to some vertex of $D$, we see that $V(C)\subseteq W_1$. By removing $\sigma$ and identifying this new boundary on the component of $\mathbb{S}_{\gamma(G)+\gamma(H)}-\sigma$ that $G$ embeds, we obtain a minimum orientable genus embedding of $G$ with $W_1$ as a facial cycle. Hence $W_1$ has length $|V(C)|$, and so $W_1$ is a cycle on the vertices $V(C)$. Similarly for $D$ and $W_2$.
	
	So $\sigma$ is indeed the desired band completing the proof.
\end{proof}

\begin{theorem}\label{consitant face theorem}
	Let $G^C$ and $H^D$ be two graphs with distinguished cycles such that there is some minimum orientable genus embedding of $G$ with $C$ as a facial cycle and furthermore in any minimum orientable genus embedding, the vertices of $C$ appear on a facial cycle only if the facial cycle has length $|V(C)|$, and similarly for $H^D$. Let $T$ be a cylindrical $T$-joining graph of $G^C$ and $H^D$.
	
	Then $\gamma(T(G^C , H^D))=\gamma(G)+\gamma(H)$ and in every minimum orientable genus embedding of $T(G^C , H^D)$, there exists a minimum orientable genus embedding of $G$ with a facial cycle on the vertices of $C$ such that every other face of $G$ is a face of $T(G^C , H^D)$, and similarly for $H$.
\end{theorem}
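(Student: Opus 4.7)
The genus equality $\gamma(T(G^C,H^D))=\gamma(G)+\gamma(H)$ is already delivered by Lemma~\ref{consitant face lemma}, so the real content is the face-preservation statement. My plan is to bootstrap from the band $\sigma$ produced by that lemma: given any minimum orientable genus embedding of $T(G^C,H^D)$, apply Lemma~\ref{consitant face lemma} to obtain a band $\sigma$ whose interior meets only the edges of $E(V(G),V(H))$, whose two boundary circles meet the graph exactly in a cycle $C'$ on $V(C)$ and a cycle $D'$ on $V(D)$, and for which $\mathbb{S}_{\gamma(G)+\gamma(H)}-\sigma\cong(\mathbb{S}_{\gamma(G)}-\mathbb{D})\cup(\mathbb{S}_{\gamma(H)}-\mathbb{D})$. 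Let $\Sigma_G$ and $\Sigma_H$ denote these two components, so that $G$ embeds in $\Sigma_G$ with $C'$ on the boundary and $H$ embeds in $\Sigma_H$ with $D'$ on the boundary.

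Next I would cap off the boundary circle of $\Sigma_G$ with a closed disk $\mathbb{D}_G$, producing a closed surface $\widehat\Sigma_G$ of orientable genus $\gamma(G)$ (since capping a disk does not change the genus) on which $G$ is embedded. This closed surface must realize the minimum genus of $G$: if $G$ embedded on a surface of smaller genus, then by reversing the construction (re-attaching the band $\sigma$ and the analogous capped embedding of $H$) we would embed $T(G^C,H^D)$ on a surface of genus smaller than $\gamma(G)+\gamma(H)$, contradicting the genus equality. In this embedding of $G$ on $\widehat\Sigma_G$, the new disk $\mathbb{D}_G$ becomes a face bounded by $C'$, which is a cycle on $V(C)$ as required.

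It remains to verify the face-preservation claim: every face of $G$ in $\widehat\Sigma_G$ other than $\mathbb{D}_G$ is already a face of $T(G^C,H^D)$. The key observation is that any such face $F$ lies entirely inside $\Sigma_G\subseteq\mathbb{S}_{\gamma(G)+\gamma(H)}-\sigma$ and is therefore disjoint from the interior of $\sigma$, hence contains no edge of $E(V(G),V(H))$; it is also disjoint from $H$ since $H\subseteq\Sigma_H$. Consequently $F$ is bounded entirely by edges of $G$ and contains no edge of $T(G^C,H^D)$ in its interior, so $F$ is already a face of $T(G^C,H^D)$ in the original embedding. Running the same argument on the $H$-side gives the corresponding statement for $H$.

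The main obstacle I anticipate is ensuring the capped embedding of $G$ is genuinely a minimum genus embedding and that the facial cycle so produced falls under the hypothesis on $G^C$ (i.e.\ is a cycle on $V(C)$ of length $|V(C)|$); however, this is exactly the content already extracted in Lemma~\ref{consitant face lemma}, where the boundary walks $W_1,W_2$ were shown to be cycles on $V(C)$ and $V(D)$ by invoking the length hypothesis. So the argument reduces to the topological bookkeeping above, and no further technical work is required.
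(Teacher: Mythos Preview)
Your proposal is correct and follows essentially the same approach as the paper: invoke Lemma~\ref{consitant face lemma} to get the band $\sigma$, cut along its boundary, cap the $G$-side with a disk to obtain a minimum genus embedding of $G$ with one new face on $V(C)$, and observe that all other faces were already faces of $T(G^C,H^D)$. Your argument that the capped embedding realizes minimum genus via ``reversing the construction'' is redundant---the lemma already tells you $\Sigma_G\cong\mathbb{S}_{\gamma(G)}-\mathbb{D}$, so capping yields $\mathbb{S}_{\gamma(G)}$ directly---but it does no harm.
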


\begin{proof}
	Consider a minimum orientable genus embedding of $T(G^C , H^D)$. By Lemma \ref{consitant face lemma} $\gamma(T(G^C , H^D))=\gamma(G)+\gamma(H)$ and there exists a band $\sigma$ of $\mathbb{S}_{\gamma(G)+\gamma(H)}$ whose interior intersects the edges $E(V(G),V(H))$ only and such that $\mathbb{S}_{\gamma(G)+\gamma(H)}-\sigma \cong (\mathbb{S}_{\gamma(G)}-\mathbb{D}) \cup (\mathbb{S}_{\gamma(H)}-\mathbb{D})$ and one boundary of $\sigma$ intersects exactly a cycle $W_1$ on the vertices $V(C)$ and the other boundary intersects exactly a cycle $W_2$ on $V(D)$.
	
	Now consider the sub-embedding of $G$ on $\mathbb{S}_{\gamma(G)+\gamma(H)}$, then cut along the boundary of $\sigma$ that $W_1$ lies on to obtain two components, one of which being homeomorphic to $\mathbb{S}_{\gamma(G)}-\mathbb{D}$ which $G$ is embedded on with $W_1$ lying on the boundary. Every face of this embedding of $G$ on $\mathbb{S}_{\gamma(G)}-\mathbb{D}$ is a face of the embedding of $T(G^C , H^D)$ on $\mathbb{S}_{\gamma(G)+\gamma(H)}$. By identifying a disk with the boundary of $\mathbb{S}_{\gamma(G)}-\mathbb{D}$, we obtain a minimum genus embedding of $G$ on $\mathbb{S}_{\gamma(G)}$ with just a single additional face $T(G^C , H^D)$ as required.
	
	Similarly for $H$.
\end{proof}

We remark that similar and possibly more general versions of Theorem \ref{consitant face theorem} are certainly possible. In particular an Euler genus analogue is possible with near identical arguments.

\begin{theorem}\label{Euler consitant face theorem}
	Let $G^C$ and $H^D$ be two graphs with distinguished cycles such that there is some minimum Euler genus embedding of $G$ with $C$ as a facial cycle and furthermore in any minimum Euler genus embedding, the vertices of $C$ appear on a facial cycle only if the facial cycle has length $|V(C)|$, and similarly for $H^D$. Let $T$ be a cylindrical $T$-joining graph of $G^C$ and $H^D$.
	
	Then $\gamma^*(T(G^C , H^D))=\gamma^*(G)+\gamma^*(H)$ and in every minimum Euler genus embedding of $T(G^C , H^D)$, there exists a minimum Euler genus embedding of $G$ with a facial cycle on the vertices of $C$ such that every other face of $G$ is a face of $T(G^C , H^D)$, and similarly for $H$.
	
	If at least one of $G$ or $H$ has such a minimum Euler genus embedding on a non-orientable surface, then $T(G^C , H^D)$ has such a minimum Euler genus embedding on a non-orientable surface.
\end{theorem}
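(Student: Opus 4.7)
The plan is to mirror the proof of Theorem \ref{consitant face theorem} essentially verbatim, replacing ``orientable genus'' with ``Euler genus'' throughout, and to handle the orientability assertion separately at the end. First, I would establish an Euler-genus analogue of Lemma \ref{consitant face lemma}. Given minimum Euler genus embeddings of $G$ and $H$ with $C$ and $D$ as facial cycles, construct an embedding of $T(G^C,H^D)$ by excising an open disk from the $C$-face in the embedding of $G$ and an open disk from the $D$-face in the embedding of $H$, embedding $T$ on the cylinder $\mathbb{S}_0-2\mathbb{D}$, and identifying $T^1$ with $C$ and $T^2$ with $\overline{D}$ along the corresponding boundary circles. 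A standard Euler-characteristic computation gives $\chi(\Sigma)=\chi(\Sigma_G)+\chi(\Sigma_H)-2$, so the resulting closed surface $\Sigma$ has Euler genus $\gamma^*(G)+\gamma^*(H)$; moreover, because cylindrical gluing preserves orientability, $\Sigma$ is non-orientable if and only if at least one of the pieces is, which already yields the last sentence of the theorem.

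For the matching lower bound I would use the additivity of Euler genus over connected components to deduce $\gamma^*(G\cup H)=\gamma^*(G)+\gamma^*(H)$, and then the fact that $G\cup H\subseteq T(G^C,H^D)$. For the structural conclusion, fix any minimum Euler genus embedding of $T(G^C,H^D)$ in a surface $\Sigma$ and inspect $\mathcal{S}=\Sigma\setminus(E(G)\cup E(H))$. Since $G\cup H$ must attain its minimum Euler genus within $\Sigma$, every component of $\mathcal{S}$ is planar (a surface with boundary of Euler genus $0$); as $G\cup H$ has two connected components, exactly one component $\sigma$ of $\mathcal{S}$ is a cylinder, with two boundary walks $W_1\subseteq V(G)$ and $W_2\subseteq V(H)$, while the remaining components are disks (hence faces of the embedding of $T(G^C,H^D)$). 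The interior of $\sigma$ carries precisely the edges $E(V(G),V(H))$, so $V(C)\subseteq V(W_1)$ and $V(D)\subseteq V(W_2)$. Capping the $W_1$-side of $\sigma$ with a disk produces a minimum Euler genus embedding of $G$ in which $W_1$ bounds a face; the facial-length hypothesis on $G^C$ then forces $W_1$ to be a cycle on $V(C)$, and every other face of this embedding of $G$ was already a face of $T(G^C,H^D)$. The symmetric argument handles $H$.

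The step I expect to be the main obstacle is verifying the orientability claim cleanly: one must check that a non-orientable minimum Euler genus embedding of $G$ (satisfying the $C$-facial hypothesis) actually yields a non-orientable embedding of the join. This follows because $\Sigma_G-\mathbb{D}$ deformation retracts onto a $1$-complex supporting an orientation-reversing loop disjoint from the boundary circle, and the cylindrical gluing attaches this piece to the rest of the surface without disturbing that loop; the loop then witnesses non-orientability of $\Sigma$. The remaining care lies only in the Euler-characteristic bookkeeping, which is routine once additivity of Euler genus over components is invoked in place of the orientable additivity cited in the proof of Lemma \ref{consitant face lemma}.
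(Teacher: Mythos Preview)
Your proposal is correct and follows exactly the approach the paper itself indicates: the paper does not give a separate proof of this theorem but simply states that ``an Euler genus analogue is possible with near identical arguments'' to Lemma~\ref{consitant face lemma} and Theorem~\ref{consitant face theorem}. Your outline carries out precisely that substitution, invoking additivity of Euler genus over components in place of orientable-genus additivity, and your treatment of the final orientability clause (via an orientation-reversing loop in $\Sigma_G-\mathbb{D}$ surviving the cylindrical gluing) is the natural extra ingredient and is sound.
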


However the analogous statement for non-orientable surfaces is unfortunately false. Consider for a suitable cylindrical $T$-joining graph $T$ of two copies of $K_7^{C_3}$. Then $\overline{\gamma}(K_7)=3$, but $\overline{\gamma}(T(K_7^{C_3},K_7^{C_3})=5<6$ as $\gamma(K_7)=1$ and so $T(K_7^{C_3},K_7^{C_3})$ embeds on $\mathbb{S}_2$.

Next we define a graph that will be a useful gadget in our constructions. An $n$-ladder is a $2n+2$ vertex graph $L$ with vertices $x_0,\dots ,x_n, y_0,\dots ,y_n$ such that;
\begin{itemize}
	\item Both $L[\{x_0,\dots, x_n\}]$ and $L[\{y_0,\dots, y_n\}]$ induce paths $x_0x_1\dots x_n$ and $y_0y_1\dots y_n$ respectively.
	\item The vertices $x_i$ and $y_j$ are adjacent if and only if $i=j$.
\end{itemize}

A \emph{hanging $n$-ladder} $H_n$ is an $n$-ladder with a single additional dominating vertex $h$. Note that $H_n$ is planar and 3-connected. For $i\in \{1,\dots ,n\}$, let $X_i$ denote the facial cycle $x_{i-1}y_{i-1}y_ix_i$ of $H_n$. Given a 3-connected planar graph $P$ with an outer 4-face, let $H_n(P)$ denote the graph obtained from the hanging ladder $H_n$ by identifying the facial cycle $X_n$ of $H_n$ with the outer facial 4-cycle of $P$. Notice that $H_n(P)$ is again planar and 3-connected, and so has a unique embedding in the plane.

Next we describe three more useful gadgets.

Let $T_k$ denote the cylindrical graph such that $T^1_k=a_0a_1\dots a_{k-1}$, $T^2_k=b_0b_1\dots b_{k-1}$ and $E(T_k)=E(T_k^1)\cup E(T_k^2) \cup \{a_ib_j : i-j\equiv 0 \text{ or } 1 \text{ (mod }k\text{)}\}$. Note that all faces in a planar embedding of $T_k$ are triangular except possibly $T_k^1$ and $T_k^2$.

Let $Y$ be some 4-cycle of $K_7-e$ containing the pair of none adjacent vertices.

\begin{proposition}\label{K_7}
	There exists a minimum genus embedding of $K_7-e$ with $Y$ as a facial cycle, and in all minimum genus embeddings, the vertices of $Y$ appear on a facial cycle only if the facial cycle has length 4.
\end{proposition}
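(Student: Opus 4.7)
The plan is to combine the classical torus triangulation of $K_7$ with an Euler-characteristic count for $K_7-e$.

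First I would establish that $\gamma(K_7-e)=1$: the graph contains a $K_5$ subgraph and so is non-planar, while $K_7-e\subseteq K_7$ and $K_7$ triangulates the torus (Heawood, Ringel). In any torus embedding of $K_7-e$, Euler's formula with $|V|=7$ and $|E|=20$ gives $F=13$; since $\sum f_i=2|E|=40=3\cdot 13+1$ and each face has size at least $3$, exactly one face is a $4$-face and the remaining twelve are triangles. Let $u,v$ denote the unique non-adjacent pair.

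For the existence half, start from any torus triangulation of $K_7$ and delete the edge $uv$. The two triangles that previously shared $uv$, say $uva$ and $uvb$, merge into a single $4$-face of the form $uavb$ whose diagonal pair is $\{u,v\}$. This is a minimum orientable genus embedding of $K_7-e$ whose $4$-face is some cycle $Y_0$ of the required type. Now $\operatorname{Aut}(K_7-e)\cong S_2\times S_5$: every automorphism must preserve the unique non-edge setwise, permuting $\{u,v\}$ and permuting the remaining five vertices arbitrarily. In particular, this group acts transitively on the $\binom{5}{2}=10$ cycles of the form $uxvy$, so by relabelling the vertices of the above embedding we can realise any prescribed $Y$ as the $4$-face.

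For the "only if" half, fix any minimum genus embedding of $K_7-e$ and suppose all four vertices of $Y$ lie on a common facial cycle $F$. Since $\{u,v\}\subseteq V(Y)$, both $u$ and $v$ lie on $F$. A triangular face has every pair of its vertices adjacent, so $|F|=3$ would force $uv\in E(K_7-e)$, a contradiction. Hence $|F|\ge 4$, and by the Euler-characteristic count above the only face of length greater than $3$ in the embedding is the unique $4$-face, so $|F|=4$.

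The main point requiring care is the existence step, which relies on the classical torus triangulation of $K_7$ together with the transitivity of $\operatorname{Aut}(K_7-e)$ on pairs from $V(K_7)\setminus\{u,v\}$; once these are in hand, the Euler-formula argument handles the structural constraint on facial length directly.
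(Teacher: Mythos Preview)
Your proof is correct and follows essentially the same route as the paper: establish $\gamma(K_7-e)=1$, obtain the embedding with a $4$-face by deleting $uv$ from a torus triangulation of $K_7$, and use the Euler count to see that the unique non-triangular face is a $4$-face, hence the only face that could contain $V(Y)$. The one place you add substance is the automorphism argument ($\operatorname{Aut}(K_7-e)\cong S_2\times S_5$ acting transitively on the cycles $uxvy$) to guarantee that the \emph{given} cycle $Y$, not just some $4$-cycle through $u,v$, appears as the face; the paper glosses over this since it is free to choose $Y$ after the fact, but your version is the more careful reading of the statement as written.
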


\begin{proof}
Clearly $\gamma(K_7-e)=1$. Notice that as $K_7$ triangulates the torus, there exists an embedding of $K_7-e$ on the torus such that $Y$ is a facial cycle. Lastly by Euler's formula any embedding of $K_7-e$ has exactly one non-triangular face, a square face, the only one whose vertices could contain $V(Y)$.
\end{proof}

The complete graph $K_8$ has minimum orientable genus 2. By Euler's formula all minimum orientable genus embeddings in the double torus $\mathbb{S}_2$ are two edges short of a triangulation. As observed by Sun \cite{sun2017face}, every embedding of $K_8$ on the double torus $\mathbb{S}_2$ has two 4-faces. This is because if $K_8$ could embed with a 5-face, then $K_9-E(K_{1,4})$ would embed and triangulate the double torus, contradicting that there is no 9-vertex triangulation \cite{jungerman1980minimal,huneke1978minimum}.

\begin{proposition}\label{K_8}
	Every embedding of $K_8$ on the double torus $\mathbb{S}_2$ has two 4-faces.
\end{proposition}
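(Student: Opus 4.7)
The plan is to combine Euler's formula with a one-vertex insertion trick, leveraging the cited non-existence of a 9-vertex triangulation of $\mathbb{S}_2$. For an embedding of $K_8$ on $\mathbb{S}_2$ we have $V = 8$, $E = 28$, and Euler characteristic $-2$, so the number of faces is $F = \chi + E - V = 18$. Since $\sum_i s_i = 2E = 56$, the excess over a triangulation is $\sum_i (s_i - 3) = 2$. Because $K_8$ is simple and highly connected, every face has size at least $3$, so the partitions of $2$ into parts of the form $s_i-3 \ge 0$ allow only two face-size profiles: either exactly one face has length $5$ and the remaining $17$ are triangles, or exactly two faces have length $4$ and the remaining $16$ are triangles.

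The only real work is to exclude the 5-face profile. Suppose for contradiction that some embedding has a 5-face $F_0$. Since $K_8$ is $2$-connected and simple, by a Proposition~\ref{uvw distinct}-style argument the boundary walk of $F_0$ is a cycle on five distinct vertices. Insert a new vertex $v$ in the interior of $F_0$ and add the five edges from $v$ to each boundary vertex; the resulting graph $G'$ is simple, has $9$ vertices and $28+5 = 33$ edges, and inherits an embedding on $\mathbb{S}_2$ in which every face is a triangle (the original triangular faces are untouched, and $F_0$ is subdivided into five triangles). Thus $G'$ is a simple 9-vertex triangulation of $\mathbb{S}_2$, which contradicts the Jungerman--Ringel and Huneke theorem \cite{jungerman1980minimal,huneke1978minimum} cited just above the proposition. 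Hence the second profile must occur, and the embedding contains exactly two 4-faces.

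The main obstacle is conceptual rather than technical: one must recognise that the "extra" face-length budget of $2$ given by Euler's formula can only be spent in one of two ways, and that the single-5-face option is precisely the configuration ruled out by filling the 5-face with a universal vertex. Once this reduction is made, the argument is a one-line application of an already-known non-triangulation result.
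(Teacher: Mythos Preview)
Your proof is correct and follows essentially the same route as the paper: compute the face-length excess of $2$ via Euler's formula, reduce to the two possible profiles, and rule out the single $5$-face by inserting a dominating vertex into it to obtain a simple $9$-vertex triangulation of $\mathbb{S}_2$, contradicting the Jungerman--Ringel/Huneke result. The paper phrases the inserted graph as $K_9-E(K_{1,4})$ (it should really be $K_9-E(K_{1,3})$, but this is immaterial), whereas you simply say ``a simple $9$-vertex triangulation''; the argument is the same. One small remark: Proposition~\ref{uvw distinct} alone only gives distinctness of consecutive triples, which for a $5$-face does not immediately exclude $v_1=v_4$; you should instead invoke the standard fact that in an embedding of a $3$-connected graph every facial walk is a simple cycle, which certainly applies to $K_8$.
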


The fact that $K_8$ has no minimum genus embedding with just a single non-triangular face will be crucial in our constructions. Sun went on to prove that with this one exception, there is a minimum (orientable or non-orientable) genus embedding of every $K_n$ with at most one non-triangular face \cite{sun2017face}. So $K_8$ having no such embedding is rather exceptional. 

We denote by $K_8^{C_4}$ the graph $K_8$ with an arbitrary distinguished cycle of length 4. We are now ready to construct the graphs of Theorems \ref{1972 impures} and \ref{Wood graphs}. The idea of the construction is as follows. We start with a hanging ladder, and glue $K_8$s to the 4-faces of the ladder via $T_4$-joins. We show that this adds 2 to the genus for every $K_8$. To achieve an odd genus, we can glue in one $K_7-e$ instead of a $K_8$. Finally, we fill one 4-face of the ladder with an arbitrary planar triangulation to make the class of so constructed graphs infinite.

For $g\ge 2$ let $F_g^0(P)=H_{\lceil \frac{g}{2}\rceil +1}(P)$. For $g\ge 2$ and $i\in \{1,\dots , \lfloor \frac{g}{2}\rfloor\}$ let $F_g^i(P)=T_4((F_g^{i-1}(P))^{X_i} , K_8^{C_4})$. For odd $g\ge 2$, let $F_g^{\lceil \frac{g}{2}\rceil}(P)= T_4((F_g^{\lfloor \frac{g}{2}\rfloor}(P))^{X_{\lceil \frac{g}{2}\rceil}} , (K_7 - e)^Y)$. Finally for $g\ge 2$ let $F_g(P)=F_g^{\lceil \frac{g}{2}\rceil}(P)$. For suitably chosen planar graphs $P$, $F_g(P)$ shall be our desired graphs.

\begin{lemma}\label{F_g even}
	Let $P$ be a 3-connected planar graph with an outer 4-face. For $i\in \{0,\dots , \lfloor \frac{g}{2}\rfloor\}$, $\gamma(F_g^i(P))=2i$ and in any minimum genus embedding of $F_g^i(P)$ there are $i$ 4-faces whose vertices induce $K_4$ and $X_{i+1},\dots , X_{\lceil \frac{g}{2}\rceil}$ are facial cycles and all other non-triangular faces are the non-triangular interior faces of $P$.
\end{lemma}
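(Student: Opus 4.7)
The plan is to proceed by induction on $i$, using Theorem~\ref{consitant face theorem} and Proposition~\ref{K_8} as the key tools.

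For the base case $i=0$, the graph $F_g^0(P)=H_{\lceil g/2\rceil+1}(P)$ is 3-connected and planar: $H_n$ is 3-connected planar, and identifying its $4$-face $X_n$ with the outer $4$-face of the 3-connected planar graph $P$ preserves both properties. By Whitney's theorem this graph has a unique planar embedding. A direct face count in $H_n(P)$ --- triangles incident to the dominating vertex $h$, the $4$-faces $X_1,\dots,X_{\lceil g/2\rceil}$ of the ladder, and the faces of $P$ that replace $X_{\lceil g/2\rceil+1}$ --- verifies the claim for $i=0$, which demands no $K_4$-inducing $4$-face.

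For the inductive step, suppose the claim holds for $i-1$ and apply Theorem~\ref{consitant face theorem} with $G^C=(F_g^{i-1}(P))^{X_i}$, $H^D=K_8^{C_4}$, and cylindrical joining graph $T=T_4$. The hypothesis on $K_8^{C_4}$ is immediate from Proposition~\ref{K_8}: every minimum genus embedding of $K_8$ has only $4$-faces and triangular faces, so any facial cycle through the four vertices of the distinguished $C_4$ has length $4$. For $G^C$, the inductive hypothesis supplies a minimum genus embedding with $X_i$ facial and describes every minimum genus embedding of $F_g^{i-1}(P)$ explicitly. To check that $V(X_i)$ lies on a common facial cycle only when that face is a $4$-face, note that triangles are excluded by cardinality; the $i-1$ additional $4$-faces have $K_4$ vertex sets, but $V(X_i)=\{x_{i-1},y_{i-1},y_i,x_i\}$ does not induce $K_4$ in $F_g^{i-1}(P)$ (the diagonals $x_{i-1}y_i$ and $y_{i-1}x_i$ are absent, since the only edges beyond $H_{\lceil g/2\rceil+1}$ come from $T_4$-joins on cycles $X_j$ with $j<i$, whose extra edges are cross-edges to a $K_8$ and never added within $V(X_j)$); and the remaining non-triangular faces $X_{i+1},\dots,X_{\lceil g/2\rceil}$ and non-triangular interior faces of $P$ each share at most two vertices with $V(X_i)$.

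With the hypotheses verified, Theorem~\ref{consitant face theorem} yields $\gamma(F_g^i(P))=\gamma(F_g^{i-1}(P))+\gamma(K_8)=2(i-1)+2=2i$, and further ensures that every minimum genus embedding of $F_g^i(P)$ restricts to minimum genus embeddings of $F_g^{i-1}(P)$ with $X_i$ facial and of $K_8$ on $\mathbb{S}_2$ with the distinguished $C_4$ facial, whose remaining faces all persist as faces of $F_g^i(P)$. Applying the inductive hypothesis to the embedding of $F_g^{i-1}(P)$ retains $X_{i+1},\dots,X_{\lceil g/2\rceil}$, the $i-1$ old $K_4$-inducing $4$-faces, and the non-triangular interior faces of $P$; Proposition~\ref{K_8} applied to the embedding of $K_8$ contributes exactly one further $4$-face whose vertex set induces $K_4$; and $T_4$ contributes only triangles. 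This tallies to the claimed structure, with $i$ $K_4$-inducing $4$-faces. The main obstacle is precisely the verification that $V(X_i)$ lies on no non-$4$ facial cycle; it rests on the structural description furnished by the inductive hypothesis together with the observation that $V(X_i)$ never induces a $K_4$.
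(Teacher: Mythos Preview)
Your proof is correct and follows essentially the same inductive approach as the paper, applying Theorem~\ref{consitant face theorem} together with Proposition~\ref{K_8} at each step. You verify the facial-cycle hypothesis on $V(X_i)$ more explicitly than the paper (which simply reads it off the inductive description of all non-triangular faces), and you account for the remaining faces via the triangular structure of $T_4$ inside the band rather than via an Euler-formula count as the paper does; these are minor stylistic differences rather than a different argument.
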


\begin{proof}
	First notice that the Lemma holds when $i=0$ as $F_g^0(P)=H_{\lceil \frac{g}{2}\rceil +1}(P)$ is planar and 3-connected, having a unique embedding with $X_{1},\dots , X_{\lceil \frac{g}{2}\rceil}$ being the only non-triangular faces other than those of $P$. We argue inductively for $i\ge 1$, so suppose that the Lemma holds for $i-1$.
	
	By definition, $F_g^i(P)=T_4((F_g^{i-1}(P))^{X_i} , K_8^{C_4})$. In any minimum genus embedding of $F_g^{i-1}(P)$, $X_i$ is a face and no other face contains all the vertices of $X_i$. By Proposition \ref{K_8}, any minimum genus embedding of $K_8$ has two 4-faces, one being $C_4$. So we may apply Theorem \ref{consitant face theorem} to $F_g^i(P)=T_4((F_g^{i-1}(P))^{X_i} , K_8^{C_4})$. First of all we have $\gamma(F_g^i(P))=\gamma(F_g^{i-1}(P)) + \gamma(K_8)=2i$.
	
	Consider a minimum genus embedding of $F_g^i(P)$. Then by Theorem \ref{consitant face theorem}, there exists a minimum genus embedding of $F_g^{i-1}(P)$ such that every face except for $X_i$ is a face of the embedding of $F_g^{i}(P)$. In particular, $X_{i+1},\dots , X_{\lceil \frac{g}{2}\rceil}$ are facial cycles, there are $i-1$ distinct 4-faces whose vertices induce $K_4$ and belong to $V(F_g^{i-1}(P))$, and non-triangular interior faces of $P$ are faces of $F_g^{i}(P)$.
	
	By Euler's formula there is just one unaccounted 4-face now. By Proposition \ref{K_8} and Theorem \ref{consitant face theorem}, the vertices of this last 4-face must be vertices of the additional $K_8$ and so therefore induce a $K_4$. Hence there are a total of $i$ distinct 4-faces whose vertices induce $K_4$ as required.
\end{proof}

\begin{lemma}\label{F_g}
	Let $P$ be a 3-connected planar graph with an outer 4-face. Then  $\gamma(F_g(P))=g$ and in any minimum genus embedding of $F_g(P)$ there are $\lfloor \frac{g}{2}\rfloor$  4-faces whose vertices induce $K_4$ and all other non-triangular faces are interior faces of $P$.
\end{lemma}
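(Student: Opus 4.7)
The plan is to split by parity of $g$ and in both cases apply Lemma \ref{F_g even}, with the odd case additionally invoking Theorem \ref{consitant face theorem} to attach a copy of $K_7-e$ to the remaining ladder face.

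For even $g$, we have $\lceil g/2 \rceil = g/2 = \lfloor g/2 \rfloor$, so by definition $F_g(P) = F_g^{g/2}(P)$. Lemma \ref{F_g even} applied with $i = g/2$ then yields $\gamma(F_g(P)) = g$; since the index range $X_{i+1},\ldots,X_{\lceil g/2 \rceil}$ in its conclusion is empty, the statement follows directly.

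For odd $g$, the construction reads $F_g(P) = T_4((F_g^{(g-1)/2}(P))^{X_{(g+1)/2}}, (K_7-e)^Y)$, and I would apply Theorem \ref{consitant face theorem}. Its hypothesis on $(K_7-e)^Y$ is exactly Proposition \ref{K_7}. For $(F_g^{(g-1)/2}(P))^{X_{(g+1)/2}}$, Lemma \ref{F_g even} with $i=(g-1)/2$ provides a minimum genus embedding with $X_{(g+1)/2}$ as a face and shows that in any minimum genus embedding the only non-triangular faces are the $(g-1)/2$ $K_4$-inducing 4-faces, $X_{(g+1)/2}$ itself, and the (non-triangular) interior faces of $P$. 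The set $V(X_{(g+1)/2})$ does not induce $K_4$ (so no $K_4$ 4-face can have it as its vertex set), it contains the ladder vertices $x_{(g-1)/2}, y_{(g-1)/2}$ which are not vertices of $P$ (so no interior face of $P$ contains $V(X_{(g+1)/2})$), and triangles are too small to contain a $4$-set. Hence the only face of $F_g^{(g-1)/2}(P)$ whose vertex set contains $V(X_{(g+1)/2})$ is $X_{(g+1)/2}$ itself, of length $4 = |V(X_{(g+1)/2})|$, verifying the hypothesis. Theorem \ref{consitant face theorem} then yields $\gamma(F_g(P)) = (g-1) + 1 = g$.

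To classify the non-triangular faces of an arbitrary minimum genus embedding of $F_g(P)$, the structural half of Theorem \ref{consitant face theorem} provides associated minimum genus embeddings of $F_g^{(g-1)/2}(P)$ and $K_7-e$ whose faces, apart from the distinguished $X_{(g+1)/2}$- and $Y$-faces, are faces of $F_g(P)$. By Lemma \ref{F_g even} the non-$X_{(g+1)/2}$ non-triangular faces of $F_g^{(g-1)/2}(P)$ are exactly the $(g-1)/2 = \lfloor g/2 \rfloor$ $K_4$ 4-faces and the interior faces of $P$. Proposition \ref{K_7} together with Euler's formula forces the unique non-triangular face of $K_7-e$ in a minimum genus embedding to be on $V(Y)$, so the remaining faces of $K_7-e$ are all triangles. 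Any other face of $F_g(P)$ lies in the cylindrical $T_4$-band, and since $T_4$'s planar embedding has only triangular interior faces (the non-triangular faces being $T_4^1$ and $T_4^2$), these are triangles as well. The argument is essentially bookkeeping; the only non-routine step is verifying the ``length-$4$ only'' hypothesis on $V(X_{(g+1)/2})$ before invoking Theorem \ref{consitant face theorem}.
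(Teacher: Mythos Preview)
Your proof is correct and follows essentially the same route as the paper: split on parity, invoke Lemma~\ref{F_g even} directly for even $g$, and for odd $g$ verify the hypotheses of Theorem~\ref{consitant face theorem} via Lemma~\ref{F_g even} and Proposition~\ref{K_7} before applying it. The only notable difference is in the final bookkeeping for odd $g$: the paper simply appeals to Euler's formula to rule out any further non-triangular faces, whereas you enumerate the remaining faces explicitly (the $K_7-e$ side and the $T_4$-band); the Euler count is slightly cleaner since your band argument tacitly assumes the boundary cycles of the band are $X_{(g+1)/2}$ and $Y$ themselves rather than arbitrary $4$-cycles on those vertex sets---true here, but an extra line to justify.
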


\begin{proof}
	When $g$ is even, the statement follows immediately from Lemma \ref{F_g even}. So we may assume that $g$ is odd.
	
	Note that by definition, $F_g(P)=T_4((F_g^{\lfloor \frac{g}{2}\rfloor}(P))^{X_{\lceil \frac{g}{2}\rceil}} , (K_7 - e)^Y)$. By Lemma \ref{F_g even} any embedding of $F_g^{\lfloor \frac{g}{2}\rfloor}(P)$ has $\lfloor \frac{g}{2}\rfloor$ 4-faces whose vertices induce $K_4$, $X_{\lceil \frac{g}{2}\rceil}$ is a face and all other non-triangular faces are interior faces of $P$. So by Proposition \ref{K_7}, we may apply Theorem \ref{consitant face theorem} to $F_g(P)=T_4((F_g^{\lfloor \frac{g}{2}\rfloor}(P))^{X_{\lceil \frac{g}{2}\rceil}} , (K_7 - e)^Y)$. First of all, $\gamma(F_g(P))=\gamma(F_g^{\lfloor \frac{g}{2}\rfloor}(P))+\gamma(K_7)=g$.
	
	Secondly by Theorem \ref{consitant face theorem}, $F_g(P)$ has $\lfloor \frac{g}{2}\rfloor$ 4-faces whose vertices induce $K_4$ and non-triangular interior faces of $P$ are faces of $F_g(P)$. Lastly by Euler's formula, there are no more non-triangular faces.
\end{proof}

With this Lemma, Theorems \ref{1972 impures} and \ref{Wood graphs} are now straight forward.

\begin{proof}[Proof of Theorem \ref{1972 impures}]
	Let $P$ be any 3-connected planar graph with just one non-triangular face, its outer 4-face. By Lemma \ref{F_g}, $F_g(P)$ embeds on $\mathbb{S}_g$, is $\lfloor \frac{g}{2} \rfloor$ edges short of a triangulation and in any given embedding all $\lfloor \frac{g}{2} \rfloor$ non-triangular faces are 4-faces inducing $K_4$.
	
	Hence $F_g(P)$ is edge-maximal and so is $\lfloor \frac{g}{2} \rfloor$-Euler impure. The result follows as there are infinitely many such choices of $P$.
\end{proof}

\begin{proof}[Proof of Theorem \ref{Wood graphs}]
	Let $P$ be any 3-connected planar graph which is at least $n$ edges short of triangulating the plane and has an outer 4-face. By Lemma \ref{F_g}, $F_g(P)$ embeds on $\mathbb{S}_g$, is at least $n$ edges short of a triangulation and in any given embedding there are $\lfloor \frac{g}{2} \rfloor$ 4-faces inducing $K_4$.
	
	Let $G$ be a super-graph of $F_g(P)$ on the same vertex set, then $G$ must also have $\lfloor \frac{g}{2} \rfloor$ 4-faces inducing $K_4$ and so is at least $\lfloor \frac{g}{2} \rfloor$ edges short of triangulating $\mathbb{S}_g$.
	
	The result follows as there are infinitely many such choices of $P$.
\end{proof}

With a modification of the construction in Theorem \ref{1972 impures}, Thomassen noted that for $g\ge 3$ there are infinitely many Euler impure graphs on $\mathbb{S}_g$ that are 6-connected, have clique number 6 and  are $\lfloor \frac{g-1}{2} \rfloor$ edges short of a triangulation \cite{test1}. This answers a question that was to appear in next the section.

\section{Further work}

We finish by discussing possible further problems concerning edge-maximal graphs on surfaces.

\begin{enumerate}
	\item Does there exists $\Omega(g)$-Euler impure graphs on non-orientable surfaces of genus $g$? Together with Theorem \ref{1972 impures} and McDiarmid and Wood's upper bound \cite{mcdiarmid2018edge}, a positive answer to this would provide a complete asymptotic answer to Kainen's question on how many edges short of a triangulation can a graph on a given surface $\Sigma$ be \cite{kainen1974some}.
	
	\item Which of the remaining non-orientable surfaces have just finitely many Euler impure graphs? In this direction we conjecture such a characterization for Dyck's surface $\mathbb{N}_3$ which we believe to be tractable.
	
	\begin{conjecture}
		The graphs $K_8-E(2K_2)$ and $K_8-E(K_{1,2})$ are the two unique Euler impure graphs on Dyck's surface $\mathbb{N}_3$.
	\end{conjecture}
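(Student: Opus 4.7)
The plan is to follow the template of the proofs of Theorems \ref{projectivePlane}, \ref{klein}, and \ref{torus}. Suppose $G$ is Euler impure on $\mathbb{N}_3$. Lemma \ref{4-face} gives an embedding with a 4-face, and by edge-maximality the non-consecutive vertices of this face are joined, so the face is $abcd$ with $G[\{a,b,c,d\}]\cong K_4$. The first step is to enumerate all embeddings of $K_4$ on $\mathbb{N}_3$ with a designated 4-face. Fixing $abcd$ as a disk face, one classifies the ways the two diagonals $ac$ and $bd$ can embed in the complementary surface (a non-orientable surface of Euler characteristic $-2$ with one boundary circle on which $a,b,c,d$ appear in cyclic order), up to self-homeomorphism fixing the boundary. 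We expect a short finite list, larger than the one or two cases arising for the projective plane, torus and Klein bottle, but still tractable.

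For each such embedding we carry out the analogue of the case analysis in Theorems \ref{torus} and \ref{klein}. An initial vertex-count bound is required. Since $\overline{\gamma}(K_7)=3$ while $\overline{\gamma}(K_8)=4$, any 7-vertex non-complete graph on $\mathbb{N}_3$ extends toward $K_7$ and so is not edge-maximal, forcing $|V(G)|\ge 8$. Together with Euler's formula and the non-existence of an 8-vertex triangulation of $\mathbb{N}_3$, this forces $|E(G)|=26$ when $|V(G)|=8$, so the complementary $2$-cell $C$ of the 4-face contains at least four additional vertices. We then apply Lemma \ref{>--<} and, where the hypotheses are met, Lemma \ref{T+K lemma} to locate adjacent interior vertices $e,f,g,\dots$ with prescribed adjacencies to $a,b,c,d$, and perform iterated sequences of edge-flips (as in the proof of Theorem \ref{torus}) to determine the remaining faces. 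In each configuration, the process should terminate either in a contradiction or in an isomorphism $G\cong K_8-E(2K_2)$ or $G\cong K_8-E(K_{1,2})$, distinguished by the local adjacency pattern of the final interior vertex.

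A separate step is needed to rule out $|V(G)|\ge 9$: since $n$-vertex triangulations of $\mathbb{N}_3$ exist for all $n\ge 9$ by the Jungerman--Ringel--Huneke classification, an Euler impure graph on more than $8$ vertices must either admit an additional edge via some flip or exhibit an explicit forbidden local configuration. We plan to mimic the strategy of Theorems \ref{projectivePlane}--\ref{torus}, in which the structure of the 4-face complement already forces $|V(G)|$ to equal the target value, so that no genuinely larger example can arise.

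The main obstacle will be combinatorial. On a non-orientable surface of Euler genus $3$ both the enumeration of $K_4$ embeddings with a distinguished 4-face and the subsequent branch-by-branch analysis are longer than in the three cases treated in Section~2. A secondary subtlety is that the conjecture asserts \emph{two} distinct Euler impure graphs, so the case analysis must bifurcate cleanly, with one branch recovering the missing matching $2K_2$ and another the missing path $K_{1,2}$; care will be needed to ensure no further $8$-vertex configuration is missed and that every $9$-or-more vertex case truly admits an added edge.
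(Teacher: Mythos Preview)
The statement you are attempting is a \emph{conjecture} in the paper, not a theorem: it appears in Section~4 (Further work) with no proof, and the authors explicitly say only that they ``believe [it] to be tractable''. There is therefore no paper proof to compare your proposal against.

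Your proposal is not a proof but a plan, and you say as much. The outline is the natural one --- extend the Section~2 template to $\mathbb{N}_3$ --- and the lower bound $|V(G)|\ge 8$ via embeddability of $K_7$ is correct. However, two pieces of the plan are not yet arguments. First, the enumeration of embeddings of $K_4$ in $\mathbb{N}_3$ with a designated 4-face is asserted to be ``a short finite list'' but is not carried out; on a surface of Euler genus~3 this list is genuinely longer than in the treated cases and drives the entire case analysis. Second, and more seriously, the step ``rule out $|V(G)|\ge 9$'' is only a hope: you write that you ``plan to mimic'' the strategy whereby the 4-face complement forces the vertex count, but in Theorems~\ref{klein} and~\ref{torus} that conclusion emerges only at the end of a completed flip analysis, not from any a~priori bound. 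Until that analysis is actually executed for each $K_4$-embedding on $\mathbb{N}_3$, there is no argument that larger Euler impure graphs do not exist. As written, this is a reasonable research programme, consistent with what the authors themselves suggest, but it is not yet a proof.
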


	\item What are the Euler impure graphs on the double torus $\mathbb{S}_2$? Despite there possibly being infinite families of Euler impure graphs on many surfaces, there may well still exist reasonable characterizations. Of particular interest is the double torus $\mathbb{S}_2$. We also believe such a characterization for $\mathbb{S}_2$ to be tractable but make no conjecture of what this characterization may be.
	
	
	\item Is there an Euler impure graph with clique number 4?
	Currently just one Euler impure graph with clique number 5 is known, $K_8-E(C_5)$ on the torus.
	
	\item Does there exists an Euler impure graph $G$ on some surface $\Sigma$, such that $G$ admits an embedding into $\Sigma$, with a face of size at least 5? What about a face of size at least $n$ for any given $n\ge 5$? All known Euler impure graphs embed with just triangular and square faces. It would be interesting to determine whether or not this is necessary.
	
	\item Does there exist a graph which is Euler impure on two distinct surfaces? Such a graph or family providing a unified answer to any of these questions on both orientable and non-orientable surfaces would be particularly nice.
	
	\item What about bipartite graphs? Similar questions can be asked for graphs that are edge-maximal with respect to being embeddable in a given surface and being bipartite. Similarity in this case the graph could be a complete bipartite graph, it could quadrangulate the surface or otherwise it would non-complete and not quadrangulate the surface. No such edge-maximal bipartite graph on a surface is known. It would be interesting to study these with all the same questions as before.
\end{enumerate}

A difficulty in studying Euler impure graphs on surfaces is having only a few examples to examine and understand. Despite constructing an infinite family on orientable surfaces it is still important to find more examples, either sporadic or more infinite families.

\section*{Acknowledgments}
We learned about this problem after it was presented by Joseph Doolittle at the Graduate Research Workshop in Combinatorics in Ames in 2018. We would also like to thank Carsten Thomassen for some insightful remarks.

\bibliographystyle{amsplain}
\bibliography{Impuregraphs.bib}

\providecommand{\bysame}{\leavevmode\hbox to3em{\hrulefill}\thinspace}
\providecommand{\MR}{\relax\ifhmode\unskip\space\fi MR }
\providecommand{\MRhref}[2]{%
  \href{http://www.ams.org/mathscinet-getitem?mr=#1}{#2}
}
\providecommand{\href}[2]{#2}
\begin{thebibliography}{10}

\bibitem{Davies2019}
James Davies, \emph{On projective-planar graphs with minimum degree 5}, In
  Preparation (2019).

\bibitem{dehkordi2019non}
Hooman~R Dehkordi and Graham Farr, \emph{Non-separating planar graphs}, arXiv
  preprint arXiv:1907.09817 (2019).

\bibitem{duke1972genus}
RA~Duke and G~Haggard, \emph{The genus of subgraphs of $K_8$}, Israel Journal of
  Mathematics \textbf{11} (1972), no.~4, 452--455.

\bibitem{franklin1934six}
Philip Franklin, \emph{A six color problem}, Journal of Mathematics and Physics
  \textbf{13} (1934), no.~1-4, 363--369.

\bibitem{harary1974maximal}
Frank Harary, Paul~C Kainen, Allen~J Schwenk, and Arthur~T White, \emph{A
  maximal toroidal graph which is not a triangulation}, Mathematica
  Scandinavica (1974), 108--112.

\bibitem{huneke1978minimum}
John~Philip Huneke, \emph{A minimum-vertex triangulation}, Journal of
  Combinatorial Theory, Series B \textbf{24} (1978), no.~3, 258--266.

\bibitem{jackson2000atlas}
David Jackson and Terry~I Visentin, \emph{An atlas of the smaller maps in
  orientable and nonorientable surfaces}, Chapman and Hall/CRC, 2000.

\bibitem{jungerman1980minimal}
Mark Jungerman and Gerhard Ringel, \emph{Minimal triangulations on orientable
  surfaces}, Acta Mathematica \textbf{145} (1980), no.~1, 121--154.

\bibitem{kainen1974some}
Paul~C Kainen, \emph{Some recent results in topological graph theory}, Graphs
  and combinatorics, Springer, 1974, pp.~76--108.

\bibitem{mader1968homomorphiesatze}
Wolfgang Mader, \emph{Homomorphies{\"a}tze f{\"u}r graphen}, Mathematische
  Annalen \textbf{178} (1968), no.~2, 154--168.

\bibitem{mcdiarmid2019purity}
Colin McDiarmid and Micha{\l} Przykucki, \emph{On the purity of minor-closed
  classes of graphs}, Journal of Combinatorial Theory, Series B \textbf{135}
  (2019), 295--318.

\bibitem{mcdiarmid2018edge}
Colin McDiarmid and David~R Wood, \emph{Edge-maximal graphs on surfaces},
  Canadian Journal of Mathematics \textbf{70} (2018), no.~4, 925--942.

\bibitem{mohar2001graphs}
Bojan Mohar and Carsten Thomassen, \emph{Graphs on surfaces}, vol.~10, JHU
  Press, 2001.

\bibitem{ringel1955man}
Gerhard Ringel, \emph{Wie man die geschlossenen nichtorientierbaren fl{\"a}chen
  in m{\"o}glichst wenig dreiecke zerlegen kann}, Mathematische Annalen
  \textbf{130} (1955), no.~4, 317--326.

\bibitem{sachs1983spatial}
Horst Sachs, \emph{On a spatial analogue of kuratowski's theorem on planar
  graphs—an open problem}, Graph theory, Springer, 1983, pp.~230--241.

\bibitem{sun2017face}
Timothy Sun, \emph{Face distributions of embeddings of complete graphs}, arXiv
  preprint arXiv:1708.02092 (2017).

\bibitem{test1}
Carsten Thomassen, \emph{Personal communication}, 2019.

\end{thebibliography}

\end{document}